\numberwithin{equation}{section}
\newtheorem{thm}{Theorem}[section]
\newtheorem{cor}[thm]{Corollary}
\newtheorem{lem}[thm]{Lemma}
\theoremstyle{definition}
\newtheorem{rem}[thm]{Remark}
\newif\ifShowLabels
\newdimen\theight
\def\TeXref#1{
     \leavevmode\vadjust{\setbox0=\hbox{{\tt
            \quad\quad  {\small  \bf #1}}}%
     \theight=\ht0
     \advance\theight  by  \dp0
     \advance\theight  by  \lineskip
     \kern -\theight \vbox  to
     \theight{\rightline{\rlap{\box0}}%
      \vss}%
      }}%
\ifShowLabels \TeXref{#1} \fi}%
\ifShowLabels \TeXref{#1} \fi}%
\ifShowLabels \TeXref{#1} \fi}%
\ifShowLabels \TeXref{#1} \fi}%
\newcommand{\eqRef}[1]%
     {\ifShowLabels \TeXref{#1} \fi
      \begin{equation}\label{#1} }
\newcommand{\vsp}{\vskip 1em}
\newcommand{\NI}{\noindent}
\newcommand{\bea}{\begin{eqnarray}}
\newcommand{\eea}{\end{eqnarray}}
\newcommand{\bas}{\begin{align*}}
\newcommand{\eas}{\end{align*}}
\newcommand{\ba}{\begin{align}}
\newcommand{\ea}{\end{align}}
\newcommand{\be}{\begin{equation}}
\newcommand{\ee}{\end{equation}}
\newcommand{\ben}{\begin{eqnarray*}}
\newcommand{\een}{\end{eqnarray*}}
\newcommand{\lam}{\lambda}
\newcommand{\Om}{\Omega}
\newcommand{\tht}{\theta}
\newcommand{\p}{\partial}
\newcommand{\al}{\alpha}
\newcommand{\Lam}{\Lambda}
\newcommand{\g}{\gamma}
\newcommand{\vep}{\varepsilon}
\newcommand{\dl}{\delta}
\newcommand{\D}{\Delta}
\newcommand{\vt}{\vartheta}
\newcommand{\s}{\sigma}
\newcommand{\lamo}{\lambda_{\Om}}
\newcommand{\rh}{\rho}
\newcommand{\IR}{\mathbb{R}}
\title{On the viscosity solutions to some nonlinear elliptic equations}
\author[T. Bhattacharya and L. Marazzi]{Tilak Bhattacharya and Leonardo Marazzi}
\begin{document}

\maketitle

\begin{abstract} We consider viscosity solutions of a class of nonlinear degenerate elliptic equations on bounded domains. We prove comparison principles and a priori supremum bounds for the solutions.
We also address the eigenvalue problem and, in many instances, show the existence of a first eigenvalue and a first positive eigenfunction. 
\vsp
\NI AMS classification: 35J60, 35J70, 35P30\\
\NI Keywords: viscosity solutions, eigenvalue, degenerate elliptic
\end{abstract}

\section{Introduction and statements of main results}

In this work, we study issues related to the eigenvalue problem for some nonlinear degenerate elliptic operators. This may be considered as a follow-up of the work in \cite{BL},  where we showed the existence of the first eigenvalue and a positive first eigenfunction of the infinity-Laplacian. The current work continues the effort of studying similar questions for a more general class of nonlinear elliptic, possibly degenerate, operators. See \cite{BNV, BD, BK, BL, J, JLM, JL, QS}.
 
To state our results more precisely, we introduce notations that will be used through out this work. Let $\Om\subset \IR^n,\;n\ge 2,$ be a bounded domain, $\overline{\Om}$ its closure and $\p\Om$ its boundary. Let $f:\Om\times \IR\rightarrow \IR$ be continuous, $S^n$ denote the set of $n\times n$ symmetric matrices and
$H(p,X)$ be continuous, for $(p,X)\in\IR^n\times S^n$. We study properties of viscosity solutions of problems of the type
\eqRef{sec1.1}
H(Du, D^2u)+f(x,u)=0,\;\;\mbox{in $\Om$, and $u=h$ on $\p\Om$,}
\ee 
where $h\in C(\p\Om)$. By a solution we mean a function $u\in C(\overline{\Om})$ that solves (\ref{sec1.1}) in the viscosity sense. 

We require that the operator $H$ satisfy monotonicity in $X$, homogeneity in $p$ and $X$, a kind of coercivity and, in some instances, invariance under reflections and rotations, see below. Our work mainly studies questions related to the eigenvalue problem in (\ref{sec1.1}) although some of our work applies to a more general class of functions $f$. 
 
We now discuss the precise nature of $H$ and also state the main results of this work. Let $o$ denote the origin in $\IR^n$ and 
a point $x\in \IR^n$ will be occasionally written as $(x_1,x_2,\cdots, x_n)$. By $I$ we denote the $n\times n$ identity matrix, $O$ will denote the $n\times n$ matrix with all entries equalling zero. Also, $e$ will always stand for a unit vector in $\IR^n$.

Through out this work we require that $H\in C(\IR^n\times S^n, \IR)$ and $H(p, O)=0,\;\forall p\in \IR^n$. We now describe the conditions that $H$ satisfies.

{\bf Condition A (Monotonicity):} The operator $H(p,X)$ is continuous at $p=0$ for any $X\in S^n$ and $H(p, O)=0$, for any $p\in \IR^n$. In addition, for any $X,\;Y\in S^n$ with $X\le Y$,
\eqRef{sec2.4}
H(p,X)\le H(p,Y),\;\;\forall p\in \IR^n.
\ee
It is clear that if $X\ge O$ then $H(p, X)\ge 0$, for any $p$.

{\bf Condition B (Homogeneity):} There are constants $k_1\ge 0$ and $k_2>0$, an odd integer, 
such that for any $(p,X)\in \IR^n\times S(n)$,
\bea\label{sec2.41}
H(\tht p, X)=|\tht|^{k_1}H(p, X),\;\;\forall \tht\in \IR, \;\;\;\mbox{and}\;\;\;H(p, \tht X)=\tht^{k_2} H(p, X),\;\;\forall \tht>0.
\eea
Although our work allows $k_2\ge 1$, our interest is in the case $k_1=1$. Define
\eqRef{sec2.42}
k=k_1+k_2\;\;\;\mbox{and}\;\;\;\g=k_1+2k_2.
\ee

For the next condition, we work with the matrix $e\otimes e$, where $e\in \IR^n$ is a unit vector. 
Observe that $(e\otimes e)_{ij}=e_ie_j$ and $e\otimes e$ is a non-negative definite matrix.
 
{\bf Condition C (Coercivity):} $H$ is coercive in the following sense. Let $e$ denote a unit vector in $\IR^n$. For every $-\infty<s<\infty$, 
we set
\bea\label{sec2.430}
&&m_1(s)=\min_{|e|=1}H\left(e,I-s e\otimes e\right), \;\;\;m_2(s)=\max_{|e|=1}H\left(e,I-s e\otimes e\right),
\\
&&m_3(s)=\min_{|e|=1}H(e, s e\otimes e-I)\;\;\;\mbox{and}\;\;\;m_4(s)=\max_{|e|=1}H(e, s e\otimes e- I). \nonumber
\eea
If $H$ is odd in $X$ then $m_3(s)=-m_2(s)$ and $m_4(s)=-m_1(s)$. Also,
the functions $m_1(s)$ and $m_2(s)$ are decreasing in $s$ while $m_3(s)$ and $m_4(s)$ are increasing in $s$. If $s<1$ then $I-se\times e$ is a positive definite matrix and, by Condition A,
$m_1(s)\ge 0$ and $m_4(s)\le 0$.

Let $k_1,\;k_2$ and $k=k_1+k_2$ be as in (\ref{sec2.41}) and (\ref{sec2.42}). Set $\hat{s}=k_1/k$. We impose that
\eqRef{sec2.43}
m_1(\hat{s})>0\;\;\mbox{and}\;\;m_4(\hat{s})<0.
\ee 
More generally, we require that there are $-\infty<s_1\le 1\le s_0<\infty$ (see (\ref{sec2.4})) such that
\bea\label{sec2.44}
&&\mbox{(i)}\;\;\min\{m_1(s),\;-m_4(s)\}>0,\;\;\forall s\le s_1,\;\;\mbox{and}\\
&&\mbox{(ii)}\;\;\max\left\{ m_2(s), \;-m_3(s)\right\}<-\ell,\;\;\forall\;s\ge s_0, \nonumber
\eea
where $0<\ell<\infty$. 

With (\ref{sec2.430}), (\ref{sec2.43}) and (\ref{sec2.44}) in view, we set
\bea\label{sec2.461}
m_1(s)=\min\left\{ m_1(s), \;-m_4(s)\right\}\;\;\mbox{and}\;\;m_2(s)=\max\left\{m_2(s), -m_3(s)\right\}.
\eea
Also, with (\ref{sec2.42}) and (\ref{sec2.43}) in mind, we set
\bea\label{sec2.45}
m_1=m_1(\hat{s}),\;\;m_2=m_2(\hat{s}),\;\;\al=\frac{k_1+2k_2}{k}=\frac{\g}{k}\;\;\mbox{and} \;\;
\s=\frac{1}{\al m_1^{1/k}}.
\eea
In Part II of work, we will distinguish between the following two cases. 
\bea\label{sec2.450}
 \mbox{(i) $\exists$ $1<\bar{s}<2$ such that $m_2(\bar{s})<0$, or}\;\;\mbox{(ii) $\exists$ $\bar{s}\ge 2$ such that $m_2(s)<0,\;\forall s>\bar{s}.$}
 \eea

{\bf Condition D (Symmetry):} $H$ is invariant under rotations and reflections. 
As a result if $v(x)=v(r)$, where $r=|x-z|$ for some $z\in \IR^n$, then
\eqRef{sec2.46}
H(Dv, D^2v)=G(r, v^{\prime}(r), v^{\prime\prime}(r)).
\ee
Restated, $H(e, I-se\otimes e)$ is independent of $e$.

In Section 3, we discuss some examples and also make some additional comments. 
\vsp
We now address the eigenvalue problem. We take $a\in C(\Om)\cap L^{\infty}(\Om),\;\inf_\Om a>0$. Consider the problem of finding $(\lam, u)$ where $\lam\in \IR$ and $u\in C(\overline{\Om})$ solve
\eqRef{sec2.47}
H(Du, D^2u)+\lam a(x) |u|^{k-1}u=0,\;\;\mbox{in $\Om,$ and $u=h$ on $\p\Om$,}
\ee
where $h\in C(\p\Om)$ and $h>0$. If $(\lam, u)$ solves (\ref{sec2.47}) with $h=0$ then we say $\lam$ is an eigenvalue of the operator $H$ and $u\not\equiv 0$ an eigenfunction corresponding to $\lam$.
Our main effort is to characterize first eigenvalue and the first eigenfunction, see \cite{BNV,BL, J}. To this end, we study (\ref{sec2.47}) and show the existence of positive solutions when $h>0$ and when $\lam$ is less than a certain value $\lamo>0$ which turns out to be the first eigenvalue of $H$.

It turns out that when (\ref{sec2.450})(i) holds, conditions $A,\;B$ and $C$ suffice. However, (\ref{sec2.450})(ii) appears to be less tractable and we impose additional conditions. At this time
it is not clear to us as to how to prove a Harnack's inequality for non-negative super-solutions for such a general class of operators.

We now discuss the work in \cite{BD} that addresses the eigenvalue problem for nonlinear elliptic equations. Besides homogeneity they require that the operator $H$ satisfy $\forall (p, Y)\in \IR^n\times S^n,$
$$a |p|^qTrace(X)\le H(p, Y+X)-H(p, Y)\le b |p|^q Trace(X),\;\;\mbox{$\forall X\in S^n,\;X\ge 0$,} $$
where $0<a\le b<\infty$ and $q>-1$. 
Clearly, this condition implies that
\bea\label{sec2.60}
&&(i)\;\;a(t-s) \le H(e, I+t e\otimes e)-H(e, I+s e\otimes e)\le b (t-s),\;t\ge s,\;\;\mbox{and}  \nonumber\\
&&(ii)\;a\le \frac{H(e,I-e\otimes e)}{n-1}\le b.
\eea
Our conditions require that $H(p, X+Y)\ge H(p,X)$, for $Y\ge 0$, and coercivity as stated in condition C. Thus, $H(e, I-se\otimes e)$ is continuous and non-increasing in $s$ (see condition A), and that  (\ref{sec2.43}) and (\ref{sec2.44}) hold. The last two conditions are also satisfied by the operators in \cite{BD}. However, we do not require that $H$ be Lipschitz continuous, see (\ref{sec2.60})(i). Also, unlike (\ref{sec2.60})(ii), we allow
the possibility that $H(e, I-e\otimes e)=0$ (as in the case of the infinity-Laplacian). While \cite{BD} does not require condition D, the bounds in terms of the Laplacian support radial solutions. These being unavailable, we require that $H$ admit radial solutions  if (\ref{sec2.450})(ii) holds. Also, we require that $q\ge 0$ while the work in \cite{BD} allows $q>-1$. 

We also remark that our approach is different from \cite{BD}. We work the equation $H(Du, D^2u)+\lam a(x) u^k=0$ with positive boundary data, while in \cite {BD}, the authors work with the non-homogeneous equation $H(Du, D^2u)+\lam a(x)u^k=f(x)$ with zero boundary data. It is not clear if a version of Theorem \ref{sup1} and some of the estimates in Section 7 hold in their case. 
\vsp
We now state the main results of this work. The set $\Om\subset\IR^n,\;n\ge 2$, will always stand for a bounded domain in this work.
By $usc(\Om)$ we denote the class of all upper semi-continuous functions on $\Om$, and $lsc(\Om)$ will denote the class of all lower semi-continuous functions on $\Om$.  

The first result is a quotient type comparison principle for positive solutions, also see \cite{BD}.
Let $g,\;h:\Om\times \IR\rightarrow \IR$ and $a:\Om\rightarrow \IR,\;a>0,$ be continuous. Suppose that $m>0$ is such that 
\eqRef{sec4.2}
h(x, t)\ge a(x)|t|^{m-1}t>g(x,t),\;\forall (x,t)\in \Om\times (0,\infty)
\ee

\begin{thm}\label{sec4.3} Let $H$ satisfy conditions $A$ and $B$, and $g$ and $h$ be as in (\ref{sec4.2}). Suppose that $u\in usc(\Om)$, and $v\in lsc(\Om), \;v>0,$ solve 
$$H(Du, D^2u)+g(x,u)\ge 0,\;\;\mbox{and}\;\;H(Dv, D^2v)+h(x,v)\le 0,\;\;\mbox{in $\Om$.} $$
Recall $k$ from (\ref{sec2.42}). Then either $u\le 0$ in $\Om$, or the following conclusions hold.\\
\NI (a) Suppose that $k=m$.  
(i) If $U\subset \Om$ is a compactly contained sub-domain of $\Om$ such that $u>0$ somewhere in $U$ then
$$\sup_{U} \frac{u}{v}=\sup_{\p U} \frac{u}{v}>0.$$

 (ii) Assume that $u>0$ somewhere in $\Om$, and $U_j\subset U_{j+1}\subset \Om,\;j=1,2,\cdots$, are compactly contained sub domains of $\Om$, with $\cup_j U_j=\Om$. If 
$\lim_{j\rightarrow \infty} \sup_{U_j}u/v<\infty,$ then 
$$0<\sup_{\Om}\frac{u}{v}=\lim_{j\rightarrow \infty} \left(\sup_{U_j}\frac{u}{v}\right).$$
\NI(b) Take $k\ne m$. We assume further that either (i) $k>m$ and $(u/v)(z)>1$, for some $z$ in $\Om$, or (ii) $k< m$ and $\sup_{\Om}u/v<1$. Then the conclusions in (i) and (ii) of part (a) hold.
\end{thm}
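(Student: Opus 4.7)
The plan is to proceed by contradiction using the homogeneity of $H$ together with the Crandall--Ishii doubling-variables technique. Fix $U\subset\subset\Om$ with $u>0$ somewhere in $U$, and set $M=\sup_{\overline{U}} u/v$, which is attained since $u/v$ is upper semicontinuous on the compact set $\overline{U}$ (recall $v>0$ and lsc). Clearly $M>0$. Suppose, toward a contradiction, that $M>\sup_{\partial U} u/v$, so $u-Mv\le 0$ on $\overline{U}$ with equality at some interior contact point and $u-Mv<0$ on $\partial U$. The first observation, using Condition B with the positive scalar $M$, is that $w:=Mv$ is a viscosity supersolution of a scaled equation: formally $H(Dw,D^2w)=M^k H(Dv,D^2v)\le -M^k h(x,v)\le -M^k a(x) v^m = -M^{k-m} a(x) w^m$. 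More precisely, if $(p,Y)\in \overline{J}^{2,-}w(y)$, then $(p/M,Y/M)\in \overline{J}^{2,-}v(y)$, and the supersolution inequality for $v$ combined with $h\ge a\,v^m$ and homogeneity yields $H(p,Y)+M^{k-m}a(y)w(y)^m\le 0$.

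Next I would apply the maximum-principle-for-semicontinuous-functions to $\Phi_j(x,y)=u(x)-w(y)-(j/2)|x-y|^2$ on $\overline{U}\times\overline{U}$. Since $\sup\Phi_j\ge 0$ (attained by taking $x=y$ at an interior contact point) while $u-w$ is uniformly strictly negative near $\partial U$ by the usc of $u-w$ and the contradiction hypothesis, the maximizers $(x_j,y_j)$ lie in the interior of $U$ for $j$ large, with $j|x_j-y_j|^2\to 0$ and, along a subsequence, $x_j,y_j\to\bar x$ satisfying $u(\bar x)=w(\bar x)=Mv(\bar x)>0$. The Crandall--Ishii theorem produces $X_j,Y_j\in S^n$ with $X_j\le Y_j$, $(p_j,X_j)\in\overline{J}^{2,+}u(x_j)$ and $(p_j,Y_j)\in\overline{J}^{2,-}w(y_j)$ where $p_j=j(x_j-y_j)$. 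The subsolution/scaled-supersolution inequalities give
\[
-g(x_j,u(x_j))\le H(p_j,X_j)\le H(p_j,Y_j)\le -M^{k-m}a(y_j)w(y_j)^m,
\]
where the middle inequality uses the monotonicity of $H$ in $X$ (Condition A). Passing $j\to\infty$ and using the standard semicontinuity identifications $u(x_j)\to u(\bar x)$ and $w(y_j)\to w(\bar x)$ at a doubling maximum, together with continuity of $g,a$ and of $H$ at $p=0$ should $p_j\to 0$ occur, I obtain $g(\bar x,u(\bar x))\ge M^{k-m}a(\bar x)u(\bar x)^m$. Combined with the strict hypothesis (\ref{sec4.2}) at $u(\bar x)>0$, this forces $M^{k-m}<1$. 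This is impossible when $k=m$, contradicts $M\ge(u/v)(z)>1$ in case (b)(i) where $k>m$, and contradicts $M\le\sup_\Om u/v<1$ in case (b)(ii) where $k<m$ (the negative exponent flipping the direction). This establishes (a)(i) and part (b) on compactly contained sub-domains.

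Part (a)(ii) then follows immediately: apply (a)(i) to each $U_j$ (valid for $j$ large since some $x_*\in\Om$ with $u(x_*)>0$ lies in $U_j$ eventually), yielding that $j\mapsto\sup_{U_j}u/v$ is nondecreasing and bounded by $\sup_\Om u/v$; conversely, every $x\in\Om=\bigcup_j U_j$ lies in some $U_j$, so $(u/v)(x)\le\lim_j\sup_{U_j}u/v$, giving the reverse inequality and hence the claimed identity (positivity of the limit is inherited from any single $U_j$ where $u>0$). The main technical obstacle is the rigorous execution of the doubling-variables step in the degenerate setting, specifically justifying the passage to the limit in $H(p_j,X_j)$ and $H(p_j,Y_j)$ when the test gradient $p_j$ may tend to $0$ at the contact point; this is precisely where the explicit continuity of $H$ at $p=0$ assumed in Condition A, together with the uniform bounds on $X_j,Y_j$ furnished by Crandall--Ishii, is essential. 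The rest of the argument is a bookkeeping of the homogeneity exponent $M^{k-m}$ across the three cases.
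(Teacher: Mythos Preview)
Your proof is correct and follows essentially the same strategy as the paper: scale $v$ to $Mv$ via homogeneity (Condition~B), apply the doubling-variables comparison to locate an interior contact point, and use the strict inequality in (\ref{sec4.2}) to force $M^{k-m}<1$, contradicting each of the three cases. The only cosmetic difference is that the paper packages the doubling step into a separately stated comparison lemma (Theorem~\ref{sec3.1}) and then cites it, rather than carrying it out in-line as you do; incidentally, your concern about passing to the limit in $H(p_j,\cdot)$ is unnecessary, since only the outer inequalities involving $g$ and $a$ need to survive the limit.
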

\NI A related version of the comparison principle for a somewhat general case is discussed in Section 5.
See \cite{BD}.

For the remaining results, we assume that $H$ satisfies conditions $A,\;B$ and $C$. 

We now state a result on a priori supremum bounds that is useful for the eigenvalue problem for $H$ on $\Om$.
Let $f\in C(\Om\times \IR, \IR)$ and
\eqRef{sec2.48}
\sup_{\Om\times[t_1, t_2]} |f(x,t|<\infty,\;\;\;\mbox{$\forall t_1,t_2$ such that $-\infty<t_1\le t_2<\infty$.}
\ee 

Assume that
there are constants $-\infty<\mu_1\le 0\le \mu_2<\infty$ such that
\eqRef{sec5.12}
\limsup_{t\rightarrow \infty} \left( \frac{\sup_{\Om}f(x,t)}{t^{k}} \right)\le \mu_2\;\;\;\mbox{and}\;\;\;\liminf_{t\rightarrow -\infty} \left( \frac{\inf_{\Om}f(x,t)}{|t|^{k}} \right)\ge \mu_1.
\ee
\begin{thm}\label{sec5.13} Let $\lam\in \IR$, $f:\Om\times\IR\rightarrow \IR$ be as in (\ref{sec2.48}) and (\ref{sec5.12}), and $h\in C(\p\Om)$. Suppose that $u\in C(\overline{\Om})$ solves
$$H(Du, D^2u)+\lam f(x, u)=0,\;\;\mbox{in $\Om,$ and $u=h$ on $\p\Om$.}$$

(a) If $|\lam|$ is small enough then $u$ is a priori bounded and 
$\sup_{\Om} |u|\le K$,
where $K$ depends on $\lam, \mu_1, \mu_2, k, h$ and $\Om$. 

(b) If $\mu_1=\mu_2=0$ then, for any $\lam$, $u$ is a priori bounded and $\sup_\Om|u|\le K$, where $K$ depends on $\lam, k, h$ and $\Om$. 
\end{thm}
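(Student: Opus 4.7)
The strategy is to construct explicit radial barriers and conclude via comparison. For the upper bound on $u$, fix $x_0\in\IR^n\setminus\overline\Om$, let $R_0=\sup_{x\in\Om}|x-x_0|<\infty$, and set $\al=\g/k=(k_1+2k_2)/k$, so that $\hat s=2-\al=k_1/k$. With $A,B>0$ to be chosen define
\[ w(x)=A-B|x-x_0|^\al. \]
Writing $r=|x-x_0|$ and $e=(x-x_0)/r$, one computes $Dw=-B\al r^{\al-1}e$ and $D^2 w=B\al r^{\al-2}(\hat s\,e\otimes e-I)$. The $p$-homogeneity $H(\tht p,X)=|\tht|^{k_1}H(p,X)$ (with $\tht=-1$) gives $H(-e,\cdot)=H(e,\cdot)$, while the $X$-homogeneity $H(p,\tht X)=\tht^{k_2}H(p,X)$ for $\tht>0$ applies to the positive scalar $B\al r^{\al-2}$. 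Combining,
\[ H(Dw,D^2 w)(x)=(B\al)^k\,r^{(\al-1)k_1+(\al-2)k_2}\,H(e,\hat s\,e\otimes e-I). \]
The choice $\al=\g/k$ makes the exponent $(\al-1)k_1+(\al-2)k_2=\al k-\g$ vanish, and (\ref{sec2.43}) of Condition~C gives $H(e,\hat s\,e\otimes e-I)\le m_4(\hat s)<0$. Hence
\[ H(Dw,D^2 w)\le -(B\al)^k|m_4(\hat s)|\qquad\text{throughout }\Om. \]

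Next, set $A=BR_0^\al+\max\{\sup_{\p\Om}h,\,0\}$, which guarantees $w\ge h$ on $\p\Om$ and $0\le w\le A$ on $\overline\Om$. To upgrade $w$ to a supersolution it suffices to verify
\[ \lam f(x,w(x))\le(B\al)^k|m_4(\hat s)|\quad\text{on $\Om$.} \]
By (\ref{sec2.48}) together with the first half of (\ref{sec5.12}), for each $\ep>0$ there is $C_\ep$ such that $f(x,t)\le(\mu_2+\ep)t^k+C_\ep$ for all $t\ge0,\;x\in\Om$; hence, for $\lam\ge 0$,
\[ \lam f(x,w)\le \lam(\mu_2+\ep)A^k+\lam C_\ep. \]
In part (b), where $\mu_2=0$, pick $\ep$ so small that $\lam\ep R_0^{\al k}\le\tfrac14\al^k|m_4(\hat s)|$ (which fixes $C_\ep$), then take $B$ large enough that $\lam C_\ep\le\tfrac12(B\al)^k|m_4(\hat s)|$; both terms are absorbed into the right-hand side. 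In part (a), the same scheme works as soon as $\lam(\mu_2+\ep)R_0^{\al k}<\al^k|m_4(\hat s)|$ for some $\ep>0$, which is the stated smallness of $|\lam|$. Comparison (from Theorem \ref{sec4.3} or its variant in Section 5) then gives $u\le w\le A$ in $\Om$.

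The lower bound $\inf_\Om u\ge -K$ is obtained symmetrically with $\widetilde w(x)=-A+B|x-x_0|^\al$; the same algebra now gives $H(D\widetilde w,D^2\widetilde w)=(B\al)^k H(e,I-\hat s\,e\otimes e)\ge(B\al)^k m_1(\hat s)>0$, and the $t\to-\infty$ tail of (\ref{sec5.12}) makes $\widetilde w$ a subsolution under the same thresholds on $|\lam|$ (or for any $\lam$ when $\mu_1=0$). The case $\lam\le 0$ is handled by swapping the roles of the two barriers and the two tails of (\ref{sec5.12}). The main technical point is the algebraic identification $\al=\g/k\Leftrightarrow\hat s=k_1/k$, which forces the $r$-exponent in $H(Dw,D^2 w)$ to vanish and makes Condition~C deliver a pointwise constant of definite sign; placing $x_0\notin\overline\Om$ side-steps the $C^2$-singularity of $|x-x_0|^\al$ at its peak, keeping the pointwise computation classical on~$\Om$.
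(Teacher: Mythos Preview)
The barrier construction — $w=A-B|x-x_0|^\al$ with $\al=\g/k$ so that the $r$-exponent in $H(Dw,D^2w)$ vanishes and Condition~C delivers a constant of definite sign — is exactly the mechanism the paper uses (packaged there as Lemma~\ref{sec5.7}). The computation is correct. The gap is in the comparison step.

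You have arranged $H(Dw,D^2w)+\lam f(x,w)\le 0$ on $\Om$ and $w\ge h$ on $\p\Om$, and you want $u\le w$. But no comparison principle in the paper gives this for a general $f$ satisfying only (\ref{sec2.48})--(\ref{sec5.12}). Theorem~\ref{sec4.3} and its Section~5 variants (Lemmas~\ref{sec4.71}, \ref{sec4.120}) are quotient comparisons for \emph{positive} functions whose nonlinearities sandwich $a(x)|t|^{k-1}t$; they do not apply to an arbitrary $f$. The basic tool, Lemma~\ref{sec3.10}, needs either a strict gap between the two zero-order terms together with monotonicity of one of them, or $f(x,\cdot)$ strictly decreasing; neither is assumed. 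Concretely: if $\sup_\Om(u-w)>0$, Theorem~\ref{sec3.1} only yields a point $z$ with $u(z)>w(z)$ and $\lam f(z,w(z))\le \lam f(z,u(z))$, which is no contradiction since $f$ need not be decreasing in $t$.

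The paper avoids this by a bootstrap. Since $u\in C(\overline\Om)$, $M:=\sup_\Om|u|<\infty$, and (\ref{sec2.48})--(\ref{sec5.12}) give $|\lam f(x,u(x))|\le |\lam|(\mu_4 M^k+\mu_3)$ with constants independent of $u$. Thus $u$ is a sub-solution of $H(Du,D^2u)+|\lam|(\mu_4 M^k+\mu_3)\ge 0$, an inequality whose right-hand side is a \emph{constant in $t$}; now your barrier (equivalently, Lemma~\ref{sec5.7}) applies via Lemma~\ref{sec3.10} and gives $M\le L+\s|\lam|^{1/k}(\mu_4 M^k+\mu_3)^{1/k}R_o^{\al}$. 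For $|\lam|$ small this self-referential inequality bounds $M$. Your barriers are the right ones; what is missing is this freezing of $f(x,u)$ as an $M$-dependent function of $x$ alone, which is precisely what makes the comparison go through.
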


Let $\dl>0$ and $\lam>0$. Consider the problem of finding a positive solution $u_\lam\in C(\overline{\Om})$ of
\eqRef{sec2.49}
H(Du_\lam, D^2u_\lam)+\lam a(x) u_\lam^k=0,\;\;\mbox{in $\Om$, and $u_\lam=\dl$ on $\p\Om$.}
\ee
We call
\eqRef{sec2.50}
\lamo=\sup\{\lam:\;(\ref{sec2.49}) \;\mbox{has a positive solution $u_\lam$.} \}.
\ee
We show in Theorem \ref{sec7.9} that $\lamo>0$. 

The next result shows that a positive solution $u_\lam$, for any $0<\lam<\lamo$, is an increasing Lipschitz continuous function of $\lam$. 

\begin{thm}\label{sup1} Let $\lam>0,\;\dl>0$ and $u_\lam\in C(\overline{\Om}),\;u_\lam>0,$ solve
\eqRef{sec10}
H(Du_\lam, D^2u_\lam)+\lam a(x) u_\lam^k=0,\;\;\mbox{in $\Om$ and $u_\lam=\dl$ on $\p\Om$.}
\ee
Set $v_x(\lam)=u_\lam(x),\;\forall x\in \overline{\Om}$ and $M_\lam=\sup_\Om u_\lam$. Then for each $x\in \Om$, $v_x(\lam)$ is a non-decreasing Lipschitz continuous function of $\lam$ and for a.e. $\lam$,
$$\frac{v_x(\lam) \log (v_x(\lam)/\dl)}{k\lam} \le   \frac{dv_x(\lam)}{d\lam}\le \left(\frac{M_\lam}{k\dl}\right) \frac{v_x(\lam)-\dl}{\lam},\;\;\;0<\lam<\lamo.$$
\end{thm}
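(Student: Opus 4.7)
The plan is to prove monotonicity first, then two finite-difference inequalities (a power-law lower one and an affine upper one) by constructing sub-solutions that exploit the homogeneity of $H$, and finally pass to the stated differential bounds a.e. using Lipschitz continuity. For monotonicity, fix $\lam_1 < \lam_2$. Since $H(Du_{\lam_1}, D^2 u_{\lam_1}) + \lam_2 a u_{\lam_1}^k = (\lam_2 - \lam_1) a u_{\lam_1}^k > 0$, the function $u_{\lam_1}$ is a viscosity subsolution of the $\lam_2$ equation while $u_{\lam_2}$ is a supersolution. Applying Theorem \ref{sec4.3}(a)(ii) with $g(x,t) = \lam_1 a(x) t^k$, $h(x,t) = \lam_2 a(x) t^k$, $m = k$ and exhausting $\Om$ by $U_j$ yields $\sup_{\Om} u_{\lam_1}/u_{\lam_2} = \lim_j \sup_{\p U_j} u_{\lam_1}/u_{\lam_2} = 1$, so $u_{\lam_1} \le u_{\lam_2}$.

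For the lower bound, fix $1 < \bt < (\lam_2/\lam_1)^{1/k}$ and set $v_\bt := \dl (u_{\lam_1}/\dl)^\bt$, matching $\dl$ on $\p\Om$. A test function $\phi$ touching $v_\bt$ from above at $x_0$ lifts through the monotone map $t \mapsto \dl(t/\dl)^{1/\bt}$ to $\psi := \dl(\phi/\dl)^{1/\bt}$, which touches $u_{\lam_1}$ from above, so $H(D\psi, D^2\psi) + \lam_1 a \psi^k \ge 0$ at $x_0$. The chain rule gives $D\psi = (\psi/(\bt\phi)) D\phi$ and $D^2\psi = (\psi/(\bt\phi))[D^2\phi + (1/\bt - 1)\phi^{-1} D\phi \otimes D\phi]$; by Condition B, $H(D\psi, D^2\psi) = (\psi/(\bt\phi))^k H(D\phi, Y)$, where $Y := D^2\phi - (1 - 1/\bt)\phi^{-1} D\phi \otimes D\phi$. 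Since $\bt > 1$ forces $Y \le D^2\phi$, Condition A gives $H(D\phi, D^2\phi) \ge H(D\phi, Y) \ge -\lam_1 \bt^k a \phi^k$; and since $\lam_1 \bt^k < \lam_2$, $v_\bt$ is a strict subsolution of the $\lam_2$ equation. Theorem \ref{sec4.3} then yields $v_\bt \le u_{\lam_2}$, and letting $\bt \nearrow (\lam_2/\lam_1)^{1/k}$ produces the power-law bound $u_{\lam_2}(x) \ge \dl (u_{\lam_1}(x)/\dl)^{(\lam_2/\lam_1)^{1/k}}$.

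For the upper bound, set $q := (\lam_1/\lam_2)^{1/k}$ and $\mu^* := q\dl/[M_{\lam_2}(1-q) + q\dl]$, and for $0 < \mu < \mu^*$ let $w_\mu := (1-\mu)\dl + \mu u_{\lam_2}$. A test function $\phi$ touching $w_\mu$ from above at $x_0$ lifts to $\psi := (\phi - (1-\mu)\dl)/\mu$ touching $u_{\lam_2}$ from above; the scalings $D\psi = \mu^{-1} D\phi$, $D^2\psi = \mu^{-1} D^2\phi$ combined with homogeneity give $H(D\phi, D^2\phi) = \mu^k H(D\psi, D^2\psi) \ge -\lam_2 a(\phi - (1-\mu)\dl)^k$. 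The bound $\phi(x_0) = w_\mu(x_0) \le (1-\mu)\dl + \mu M_{\lam_2} < (1-\mu)\dl/(1-q)$ (which is equivalent to $\mu < \mu^*$) forces $\lam_2 (\phi - (1-\mu)\dl)^k < \lam_1 \phi^k$, so $w_\mu$ is a strict subsolution of the $\lam_1$ equation. Applying Theorem \ref{sec4.3} (with the effective $g(x,t) := \lam_2 a(x)[(\min(t, T_0) - (1-\mu)\dl)_+]^k$ truncated at $T_0 := (1-\mu)\dl + \mu M_{\lam_2}$, so that $g(x,t) < h(x,t) = \lam_1 a(x) t^k$ for all $t > 0$) gives $w_\mu \le u_{\lam_1}$. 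Letting $\mu \nearrow \mu^*$ and rearranging yields
$$ u_{\lam_2}(x) - u_{\lam_1}(x) \le \frac{u_{\lam_1}(x) - \dl}{\dl}\, M_{\lam_2} \bigl((\lam_2/\lam_1)^{1/k} - 1\bigr). $$

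Finally, using $(\lam_2/\lam_1)^{1/k} - 1 \le (\lam_2 - \lam_1)/(k\lam_1)$ (concavity of $t^{1/k}$), the upper estimate yields local Lipschitz continuity of $v_x$ on compact subintervals of $(0, \lamo)$, since $\lam \mapsto M_\lam$ is monotone and hence locally bounded. Thus $v_x$ is a.e. differentiable, and dividing both discrete inequalities by $\lam_2 - \lam_1$ and letting $\lam_2 \to \lam^+$ at each point of differentiability produces the stated differential inequalities on $dv_x/d\lam$. The main obstacles I expect are carefully tracking the chain-rule expressions through the homogeneity exponents in the two sub-solution verifications, and the technical modification of $g$ in the upper-bound step so that the global hypothesis $h > g$ on $\Om \times (0, \infty)$ of Theorem \ref{sec4.3} is met.
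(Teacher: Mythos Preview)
Your proof is correct and follows essentially the same route as the paper: the power-law lower bound is exactly the change-of-variables argument of Lemma~\ref{sec4.9}/Remark~\ref{sec4.12}/Lemma~\ref{sec4.120} carried out inline, and your affine subsolution $w_\mu=(1-\mu)\dl+\mu u_{\lam_2}$ is the inverse (take $\mu=1-\tht$) of the paper's supersolution $(u_\lam-\tht\dl)/(1-\tht)$ from Theorem~\ref{sec7.15}(i), so the two upper-bound arguments are dual versions of the same affine shift. The only cosmetic difference is that your finite-difference upper bound carries $M_{\lam_2}$ rather than $M_{\lam_1}$, which is harmless in the a.e.\ limit since $\lam\mapsto M_\lam$ is monotone and hence continuous off a null set.
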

\NI Also, see Remark \ref{sup2}).

We now provide existence results for the eigenvalue problem in (\ref{sec2.47}). The conditions (\ref{sec2.450}) (i) and (ii) play a crucial role in these statements and the following will be assumed through out this work.
\bea\label{sec1.10}
&&\mbox{$\Om$ is any bounded domain if (\ref{sec2.450})(i) holds, and $\Om$ satisfies a uniform outer ball }\nonumber\\
&&\mbox{condition if (\ref{sec2.450}) (ii) holds.}
\eea
\begin{thm}\label{sec7.9}  Suppose that $\lam>0$, and $a(x)\in C(\Om)$ with $\inf_\Om a>0$. For $h\in C(\p\Om)$ with $\inf_{\p\Om} h>0$, consider the boundary value problem
\eqRef{sec7.10} 
H(Du, D^2u)+\lam a(x) |u|^{k-1} u=0,\;\;\mbox{in $\Om$ and $u=h$ on $\p\Om$.}
\ee
Set $R=$diam$(\Om)$ and $\nu=\sup_{\Om} a(x)$. Recall (\ref{sec1.10}).

(a) If (\ref{sec2.450})(i) holds and $0<\lam<|m_2(\bar{s})|(2-\bar{s})^k(\nu R^{\g})^{-1}$ then (\ref{sec7.10}) has a unique positive solution. 

(b) Suppose that (\ref{sec2.450})(ii) holds and $\Om$ satisfies a uniform outer ball condition with optimal radius $2\rho>0$. Fix $\beta>\bar{s}-2$ and $s=\beta+2$. If 
$$0<\lam<\frac{|m_2(s)|\beta^k}{\nu R^{\g}} \left(\frac{\rho}{R}\right)^{k\beta}$$ 
then (\ref{sec7.10}) has a unique positive solution. 
Moreover, $u>\inf_{\p\Om} h$ in $\Om$.     
\end{thm}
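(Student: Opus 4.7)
The plan is to obtain existence via Perron's method and uniqueness from Theorem~\ref{sec4.3}, the main step being the construction of a positive radial super-solution whose admissible range of $\lambda$ matches the stated bound. A positive viscosity sub-solution is the constant $\underline u\equiv c:=\inf_{\partial\Omega}h>0$: Condition~A gives $H(0,O)=0$, so $H(0,O)+\lambda a(x)c^k=\lambda a(x)c^k>0$, and $\underline u\le h$ on $\partial\Omega$.

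For the super-solution I use a radial ansatz $v=A\pm Br^\eta$ with $r=|x-x_0|$, $A\ge\sup_{\partial\Omega}h$ and $B>0$ large, the sign and exponent $\eta\ne 0$ chosen so that $v'>0$ on the relevant range of $r$. Using Condition~B (homogeneity) to pull scalar factors out of $H$ and Condition~D to identify $H(e,I-se\otimes e)\equiv m_2(s)$ (independent of the unit vector $e$), a direct computation gives the unified formula
\[ H(Dv,D^2v)=(B|\eta|)^k\,r^{k\eta-\gamma}\,m_2(2-\eta). \]
In case (a) take $v=A+Br^\eta$, $\eta=2-\bar s\in(0,1)$, and $x_0\in\partial\Omega$, so $r\in(0,R]$ in $\Omega$; then $m_2(\bar s)<0$ and $k\eta-\gamma=k_1-k\bar s<0$, hence $r^{k\eta-\gamma}$ is decreasing and the super-solution inequality $H(Dv,D^2v)+\lambda a v^k\le 0$ is worst at $r=R$. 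Letting $B\to\infty$ with $A$ fixed reduces this to $\lambda\nu R^\gamma<(2-\bar s)^k|m_2(\bar s)|$, precisely the hypothesis of (a). In case (b), for each $y\in\partial\Omega$ let $z_y$ be the center of an outer ball of radius $2\rho$ tangent to $\partial\Omega$ at $y$, take $x_0=z_y$ and $v=A-B|x-z_y|^\eta$ with $\eta=-\beta<0$; then $r\ge 2\rho$ on $\bar\Omega$ (so the singularity of $r^{-\beta}$ is avoided), $m_2(2-\eta)=m_2(s)<0$, $k\eta-\gamma=-(k\beta+\gamma)<0$, and the analogous worst-case analysis at the outermost $r$ (together with the uniform outer-ball geometry to bound $\sup_{x\in\bar\Omega}|x-z_y|$ in terms of $R$) reduces the condition to $\lambda<|m_2(s)|\beta^k(\rho/R)^{k\beta}/(\nu R^\gamma)$.

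Perron's method then produces a continuous viscosity solution $u$ with $\underline u\le u\le\bar u$; localized versions of the same radial barriers with $A$ replaced by $h(y)\pm\varepsilon$ at each $y\in\partial\Omega$ (together with matching sub-barriers from below) enforce $u=h$ continuously on $\partial\Omega$. Uniqueness follows from Theorem~\ref{sec4.3}(a) applied to two positive solutions $u_1,u_2$: using the $k$-homogeneity of the equation, one replaces $u_i$ by $(1\pm\varepsilon)u_i$ to create the strict inequality $h>g$ demanded by~(\ref{sec4.2}), obtains $\sup_\Omega u_1/u_2\le\sup_{\partial\Omega}u_1/u_2=1$ and symmetrically $u_2\le u_1$, and then sends $\varepsilon\to 0$. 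Finally, $u>c$ in $\Omega$ because if $u(y_0)=c$ at some interior $y_0$ the constant $c$ touches $u$ from below, so the super-solution property forces $H(0,O)+\lambda a(y_0)c^k\le 0$, contradicting $\lambda a c^k>0$. The principal obstacle is the case (b) super-solution, where recovering the precise constant $(\rho/R)^{k\beta}$ requires a careful choice of $z_y$ and tight use of the outer-ball geometry to bound $\sup_{x\in\bar\Omega}|x-z_y|$ by $R$; a secondary obstacle is the Perron-method boundary regularity, which relies on adapting the same radial barriers at every boundary point.
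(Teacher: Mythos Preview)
Your overall strategy---Perron's method with radial barriers, the constant $c=\inf_{\partial\Omega}h$ as a global sub-solution, uniqueness from the quotient comparison---matches the paper's. There is, however, a genuine gap: you invoke Condition~D to obtain the \emph{equality} $H(Dv,D^2v)=(B|\eta|)^k r^{k\eta-\gamma}m_2(2-\eta)$. But Theorem~\ref{sec7.9} does \emph{not} assume Condition~D; only Conditions A, B, C are in force (see the preamble to Part~II). What you actually need for a super-solution is the \emph{inequality} $H(e,I-se\otimes e)\le m_2(s)$, which holds by the very definition of $m_2$ in (\ref{sec2.430})--(\ref{sec2.461}); this is exactly what the paper uses in (\ref{sec2.61}) and (\ref{sec2.62}). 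Replace your equality by the appropriate one-sided bound and drop the appeal to Condition~D.

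Two secondary points. First, in Case~(b) the paper centers the barrier not at the outer-ball center $z_y$ (distance $2\rho$ from $y$) but at a point $q$ on the segment $\overline{yz_y}$ with $|q-y|=\rho$; this is what produces the factor $(\rho/R)^{k\beta}$ rather than $(2\rho/R)^{k\beta}$, and bounding $\sup_{x\in\Omega}|x-q|$ by $R$ is already a slight imprecision there. Your center $z_y$ would give a different (in fact less restrictive) constant, so if you want to match the stated bound you must shift the center as the paper does. Second, the paper carries out the boundary-barrier construction explicitly: for sub-barriers it glues a radial piece $h(y)-\varepsilon-d r^\beta$ (or $c+dr^{-\beta}$) to the constant $m/2$ and checks the viscosity inequality across the seam; your remark that ``localized versions \ldots\ enforce $u=h$'' is correct in spirit but the paper's argument contains a nontrivial verification at the gluing interface that you should not leave implicit.
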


We show next  that $\lamo$, as defined in (\ref{sec2.50}), is independent of $h$. Also, see Remark \ref{sec8.80}.

\begin{thm}\label{sec7.171} Suppose that (\ref{sec1.10}) holds.
Let $\dl>0$, $a(x)\in C(\Om),\;\inf_\Om a>0,$ and $h\in C(\p\Om)$, with $\inf_{\p\Om} h>0$. Suppose that, for some $\lam>0$, the problem
$H(Du, D^2u)+\lam a(x) |u|^{k-1} u=0,\;\mbox{in $\Om,$ and $u=\dl$ on $\p\Om$,}$
has a positive solution. Then the problem
\eqRef{sec7.172} 
H(Dv, D^2v)+\lam a(x) |v|^{k-1} v=0,\;\;\mbox{in $\Om$ and $v=h$ on $\p\Om$,}
\ee
also has a positive solution.
\end{thm}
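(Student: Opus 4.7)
The plan is to use the homogeneity of Condition B to convert the constant-boundary solution $u$ into a global sub-/super-solution pair for the problem with boundary data $h$, and then to apply Perron's method, building the needed boundary barriers again from rescalings of $u$.

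Let $u\in C(\overline{\Om})$, $u>0$, be the given solution with $u|_{\p\Om}\equiv\dl$. Condition B yields, for any constant $c>0$,
$$H(D(cu),\, D^2(cu))+\lam a(x)(cu)^k = c^{k_1+k_2}\bigl[H(Du,\, D^2u)+\lam a(x)u^k\bigr]=0,$$
so $cu$ is itself a positive viscosity solution of the PDE with boundary value $c\dl$. Set $m:=\inf_{\p\Om} h>0$ and $M:=\sup_{\p\Om} h$, and define $\underline{v}:=(m/\dl)u$ and $\overline{v}:=(M/\dl)u$. Both solve the PDE in $\Om$; on $\p\Om$ they equal the constants $m\le h\le M$, and $0<\underline{v}\le \overline{v}$ in $\overline{\Om}$. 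Hence $\underline{v}$ and $\overline{v}$ are a global sub-/super-solution pair for (\ref{sec7.172}).

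I would then execute the standard Perron construction: put
$$\mathcal{S}=\{w\in usc(\overline{\Om}):\;w\text{ is a sub-solution of the PDE in }\Om,\;\underline{v}\le w\le \overline{v}\text{ in }\overline{\Om},\;w\le h\text{ on }\p\Om\},$$
which is nonempty since $\underline{v}\in\mathcal{S}$, and let $v(x):=\sup_{w\in\mathcal{S}} w(x)$. Standard envelope arguments, combined with the comparison principle supplied by Theorem \ref{sec4.3}(a) (applicable since the competing functions are positive and $k=m$) and its extension outlined in Section 5, force the usc envelope $v^*$ to be a sub-solution and the lsc envelope $v_*$ a super-solution; comparison gives $v^*\le v_*$, so $v$ is a continuous viscosity solution in $\Om$ with $\underline{v}\le v\le \overline{v}$, and in particular $v>0$.

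The hard part, which is where most of the work lies, is verifying the boundary continuity $v(x)\to h(x_0)$ for each $x_0\in\p\Om$. For fixed $x_0$ and $\ep>0$, the scaled functions $w^\pm_\ep:=((h(x_0)\pm\ep)/\dl)\,u$ are continuous on $\overline{\Om}$, solve the PDE, and satisfy $w^\pm_\ep(x_0)=h(x_0)\pm\ep$; by continuity of $h$ at $x_0$ there is $r>0$ such that $w^-_\ep\le h\le w^+_\ep$ on $\p\Om\cap B_r(x_0)$. To use $w^\pm_\ep$ as local barriers on $\Om\cap B_r(x_0)$ one must adjust them on the inner portion $\Om\cap\p B_r(x_0)$ by a nonnegative perturbation so that the resulting functions dominate $v$ from above (respectively, are dominated by $v$ from below) while retaining the super-/sub-solution property. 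In case (\ref{sec2.450})(i) this is accomplished by a $\max$/$\min$ with $\overline{v}$/$\underline{v}$ outside a smaller ball together with a standard cut-off; in case (\ref{sec2.450})(ii), hypothesis (\ref{sec1.10}) supplies a uniform outer ball, and the radial barriers guaranteed by Condition D (as made explicit in Section 7) provide the required correction. Comparison on $\Om\cap B_r(x_0)$ then yields
$$h(x_0)-\ep\le \liminf_{x\to x_0} v(x)\le \limsup_{x\to x_0} v(x)\le h(x_0)+\ep,$$
and letting $\ep\to 0^+$ concludes $v\in C(\overline{\Om})$ with $v=h$ on $\p\Om$, completing the proof.
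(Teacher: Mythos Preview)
Your idea of using the homogeneity to take $\underline v=(m/\dl)u$ and $\overline v=(M/\dl)u$ as a global sub-/super-solution pair is correct and is in fact simpler than the paper's separate construction of sub-solutions. The interior Perron argument then goes through via Lemma \ref{sec4.71}, since $\inf_{\overline\Om}\underline v>0$.

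The gap is in the boundary barrier step, which you yourself flag as ``the hard part''. Your candidate upper barrier $w^+_\ep=((h(x_0)+\ep)/\dl)u$ has constant boundary value $h(x_0)+\ep$, so on $\p\Om$ away from $x_0$ it may lie \emph{below} $h$ (and hence below $v$); to apply comparison on $\Om\cap B_r(x_0)$ you must dominate $v$ on the inner piece $\Om\cap\p B_r(x_0)$, where $v$ can be as large as $\overline v=(M/\dl)u>w^+_\ep$. Your proposed fixes do not work here: the maximum of two super-solutions is \emph{not} a super-solution (only the minimum is), so $\max(w^+_\ep,\overline v)$ fails, and in any case equals $\overline v$ everywhere; a ``standard cut-off'' destroys the super-solution property for a fully nonlinear equation; and subtracting a constant from $\overline v$ does not preserve the inequality because of the zero-order term $\lam a(x)v^k$. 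Finally, your appeal to Condition D in case (\ref{sec2.450})(ii) is illegitimate: Theorem \ref{sec7.171} is stated and proved under Conditions $A,B,C$ only (with the outer-ball condition), and the paper reserves Condition D for Theorems \ref{sec8.10}(b) and \ref{sec9.00}(ii).

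What the paper actually does is precisely the construction you are missing. Fixing $y\in\p\Om$, it builds a global super-solution $w$ with $w(y)=h(y)+\ep$ and $w\ge h$ on all of $\p\Om$ by patching: near $y$ it uses an explicit radial function $\bar\phi$ (of the form $c+dr^\beta$ in case (i), $c-dr^{-\beta}$ in case (ii)) that rises from $h(y)+\ep$ to $2\bar m$ on $\p B_\eta(y)$; on $\Om_\eta=\Om\setminus\overline{B_\eta(y)}$ it uses the PDE solution $\phi$ with constant data $2\bar m$ (supplied by Lemma \ref{sec8.7}). The crux is checking the super-solution property at the interface $\p B_\eta(y)\cap\Om$: this requires an upper bound on the outward radial slope of $\phi$ there, obtained by comparing $\phi$ to a second explicit radial super-solution $\psi$ and choosing $\eta$ small so that $\bar\phi'(\eta^-)>\psi'(\eta^+)$. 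This derivative comparison is the substantive content that your sketch omits.
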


The boundedness of $\lamo$ is shown in

\begin{thm}\label{sec8.10} Suppose that $H$ satisfies conditions $A,\;B$ and $C$.
Let $\dl>0$ and $a(x)\in C(\Om)\cap L^{\infty}(\Om),\;\inf_\Om a>0$. Recall (\ref{sec1.10}).

(a) Suppose that (\ref{sec2.450}) (i) holds then $\lamo<\infty$. (b) Suppose that (\ref{sec2.450}) (ii) holds and $H$ satisfies $D$ then $\lamo<\infty$.
\end{thm}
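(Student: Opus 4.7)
My plan is to produce, inside a ball $B_R(x_0) \Subset \Om$, an explicit positive viscosity sub-solution $w$ of $H(Dw, D^2 w) + \mu_0 a(x) w^k \ge 0$ that vanishes on $\p B_R(x_0)$ for some finite $\mu_0$, and then to apply the quotient comparison principle of Theorem \ref{sec4.3} to rule out positive solutions of (\ref{sec2.49}) for any $\lam > \mu_0$. The natural ansatz is $w(x) = (R^2 - |x - x_0|^2)^\al$ for a large $\al > 1$, which is smooth in $B_R(x_0)$, strictly positive there, and zero on the boundary. A direct computation with $r = |x - x_0|$ and $e = (x - x_0)/r$ gives
\begin{equation*}
D^2 w \,=\, -2\al(R^2 - r^2)^{\al - 1}\bigl[\,I - s(r)\,e \otimes e\,\bigr], \qquad s(r) \,=\, \frac{2(\al - 1)\,r^2}{R^2 - r^2},
\end{equation*}
and the homogeneity supplied by Condition B expresses $H(Dw, D^2 w)$ as a positive prefactor $[2\al(R^2 - r^2)^{\al - 1}]^k\,r^{k_1}$ times $H(-e, s(r)\,e \otimes e - I)$; as $r$ runs over $(0, R)$, $s(r)$ sweeps $(0, \infty)$.

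Choosing $\al$ large, the region $\{r: s(r) \ge \bar s\}$ is an outer annulus of $B_R(x_0)$. On this annulus, (\ref{sec2.450})(i) in case (a), or (\ref{sec2.450})(ii) together with Condition D in case (b), combined with the redefinition (\ref{sec2.461}), supplies a uniform positive lower bound $H(-e, s(r)\,e \otimes e - I) \ge m_3(s(r)) > 0$, so the sub-solution inequality $H + \mu a w^k \ge 0$ holds for any $\mu \ge 0$. On the complementary inner region $\{r \le r_\ast\}$, bounded away from $\p B_R(x_0)$, $|H(-e, s(r)\,e \otimes e - I)|$ is uniformly bounded by continuity on the compact range $s \in [0, \bar s]$ and the unit sphere, while $w^k \ge (R^2 - r_\ast^2)^{\al k} > 0$ is bounded below; consequently
\begin{equation*}
\mu_0 \,:=\, \sup_{B_R(x_0)} \frac{|H(Dw, D^2 w)|}{a(x)\, w^k}
\end{equation*}
is finite. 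At the interior maximum $r = 0$, $Dw(0) = 0$ and the $p$-homogeneity with $k_1 > 0$ forces $H(0, D^2 w(0)) = 0$, so the classical inequality reduces to $\mu_0 a(x_0) w(x_0)^k \ge 0$. Smoothness of $w$ inside $B_R(x_0)$ makes viscosity and classical notions of sub-solution coincide here.

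Now suppose for contradiction that $\lam > \mu_0$ and a positive solution $u_\lam$ of (\ref{sec2.49}) exists. On $\overline{B_R(x_0)} \subset \Om$, $u_\lam$ is continuous and bounded below by a positive constant. Invoke Theorem \ref{sec4.3}(a) on $B_R(x_0)$ with $u = w$, $v = u_\lam$, $g = \mu_0 a z^k$, $h = \lam a z^k$, and $m = k$; hypothesis (\ref{sec4.2}) holds with $\tilde a = \frac12(\mu_0 + \lam) a$, because $\lam > \mu_0 > 0$ and $a > 0$. Taking the compactly contained exhaustion $U_j = B_{R - 1/j}(x_0)$ and using that $w/u_\lam$ is continuous on $\overline{B_R(x_0)}$, conclusion (a)(ii) yields
\begin{equation*}
\sup_{B_R(x_0)} \frac{w}{u_\lam} \,=\, \lim_{j \to \infty} \sup_{\p U_j} \frac{w}{u_\lam} \,=\, 0,
\end{equation*}
since $w \to 0$ on $\p B_R(x_0)$ while $u_\lam$ stays positive there. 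This forces $w \le 0$ in $B_R(x_0)$, contradicting $w > 0$. Hence $\lamo \le \mu_0 < \infty$.

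The principal obstacle is verifying $\mu_0 < \infty$ in case (b), where $\bar s \ge 2$ forces $\al$ to be chosen large enough that $\{s(r) > \bar s\}$ is non-trivial inside $B_R(x_0)$, while at the same time the prefactor $[2\al(R^2 - r^2)^{\al - 1}]^k$ itself grows with $\al$. Condition D is the tool that reduces the quantity $H(-e, s\, e \otimes e - I)$ to a direction-independent function of $s$ aligning with $m_3(s)$, and lets the positivity on the outer annulus propagate uniformly. A careful balancing of $\al$ against the estimate for $|H|$ on the inner region is the last technical step; if the polynomial ansatz proves insufficient when $\bar s$ is large, one can replace $w$ by a radial profile adapted directly to the value of $\bar s$ via Condition D.
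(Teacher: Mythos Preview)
Your approach is correct and genuinely different from the paper's. The paper argues by contradiction via scaling: it reduces to a ball $B$ with constant weight, assumes $\lam_B=\infty$, takes $\lam_\ell=\ell^{\g}\lam_1$ and normalizes so that $u_\ell(y)=1$, then rescales each $u_\ell$ to a solution $v_\ell$ of the \emph{same} equation on the ball of radius $\ell R$. Lemma~\ref{sec4.120} forces the boundary values $\dl_\ell$ to decay like $\dl_1^{\ell^{\al}}$, while an explicit radial sub-solution (built from (\ref{sec2.61}) in case~(a), (\ref{sec2.62}) in case~(b)) gives only power decay of $v_{2\ell}(R_\ell)$; comparing yields the contradiction. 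In case~(b) Condition~D is what makes each $v_\ell$ radial, which the paper needs to identify $v_m$ with $v_\ell$ on $B_\ell$ and to place the barrier.

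Your direct barrier $w=(R^2-r^2)^{\al}$ on an inner ball bypasses the scaling entirely: once $H(Dw,D^2w)+\mu_0\,a\,w^k\ge 0$ holds, Remark~\ref{sec4.70} (equivalently your exhaustion via Theorem~\ref{sec4.3}) forces $w\le 0$ whenever a positive $u_\lam$ with $\lam>\mu_0$ exists, a contradiction. Two small corrections. First, your displayed $\mu_0=\sup_{B_R(x_0)}|H(Dw,D^2w)|/(a\,w^k)$ is actually $+\infty$: on the outer annulus $H\ge 0$ but $|H|/w^k\sim (R^2-r^2)^{-k}\cdot H(e,s(r)e\otimes e-I)$ blows up as $r\to R$. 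What you want (and what your inner-region discussion justifies) is $\mu_0=\sup_{\{r\le r_\ast\}}(-H(Dw,D^2w))^+/(a\,w^k)$, which is finite. Second, your invocation of Condition~D is unnecessary: the inequality $H(e,s\,e\otimes e-I)\ge m_3(s)$ is the definition of $m_3$ as a minimum over $|e|=1$, and (\ref{sec2.450})(ii) together with the redefinition (\ref{sec2.461}) already gives $m_3(s)>0$ for $s>\bar s$. Hence your argument in fact proves part~(b) \emph{without} Condition~D, strictly more than the paper establishes. The worry in your final paragraph is unfounded: any fixed $\al>1$ makes $\{s(r)>\bar s\}$ a non-empty outer annulus (since $s(r)\to\infty$ as $r\to R$), and no balancing of $\al$ is required.
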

In part (ii), if a Harnack's inequality holds then the conclusion follows without the imposition of condition D. See Remark \ref{sec8.14}.

Finally, we show

\begin{thm}\label{sec9.00} Suppose that $H$ satisfies conditions $A,\;B$ and $C$. Let $a\in C(\Om, \IR),\;\inf_\Om a>0.$ 
Consider the problem
\eqRef{sec9.001} 
H(Du, D^2u)+\lamo a(x) |u|^{k-1}u=0,\;\;\mbox{in $\Om$ and $u=0$ on $\p\Om$,}
\ee
where $u\in C(\overline{\Om})$. Recall (\ref{sec1.10}).

(i) Suppose that (\ref{sec2.450}) (i) holds then (\ref{sec9.001}) has a positive eigenfunction $u$.

(ii) Suppose that $H$ satisfies $D$ and (\ref{sec2.450}) (ii) holds then (\ref{sec9.001}) has a positive radial eigenfunction when $\Om$ is a ball. 
\end{thm}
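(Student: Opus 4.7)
The plan is a normalization/blow-up argument along a sequence $\lam_j \uparrow \lamo$. By (\ref{sec2.50}), Theorem~\ref{sup1}, and Theorem~\ref{sec7.171}, for each $j$ I pick a positive solution $u_j \in C(\overline{\Om})$ of
\[
H(Du_j, D^2 u_j) + \lam_j a(x) u_j^k = 0 \ \text{in}\ \Om, \qquad u_j = \dl \ \text{on}\ \p\Om,
\]
for a fixed $\dl > 0$. Set $M_j = \sup_\Om u_j$. The homogeneity of Condition B gives the scaling $H(MDv, MD^2 v) = M^{k_1+k_2}H(Dv, D^2 v) = M^k H(Dv, D^2 v)$, so the rescaled sequence $v_j := u_j/M_j$ still solves the same equation at $\lam = \lam_j$, has $\sup_\Om v_j = 1$, and satisfies $v_j = \dl/M_j$ on $\p\Om$.

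The first key step is to show $M_j \to \infty$. If not, the non-decreasing sequence $\{u_j\}$ (Theorem~\ref{sup1}) is uniformly bounded, so pointwise monotone convergence together with the local equicontinuity afforded by the a priori estimates of Section~7 yields uniform-on-compacta convergence, and stability of viscosity solutions produces a positive $u^*$ at $\lam = \lamo$ with $u^* = \dl$ on $\p\Om$ and $\inf_\Om u^* \ge \dl$ (the constant $\dl$ is a sub-solution; comparison is Theorem~\ref{sec4.3}). A standard continuation argument — the set of $\lam$ admitting a strictly positive solution with boundary data $\dl$ is open, once one has a solution whose infimum is bounded below — then produces positive solutions for $\lam$ in some interval beyond $\lamo$, contradicting (\ref{sec2.50}). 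Hence $M_j \to \infty$ and $v_j = \dl/M_j \to 0$ on $\p\Om$.

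Now pass to the limit. The bounds $0 < v_j \le 1$ and local equicontinuity (from the Section~7 estimates, applied to the normalized equation) give, via Arzelà--Ascoli, a subsequence $v_j \to v^*$ converging locally uniformly in $\Om$; stability of viscosity solutions gives $H(Dv^*, D^2 v^*) + \lamo a(x)(v^*)^k = 0$ in $\Om$, and a uniform boundary barrier from (\ref{sec1.10}) extends $v^*$ continuously to $\overline{\Om}$ with $v^* = 0$ on $\p\Om$. The main technical obstacle is to verify $v^* \not\equiv 0$: each $v_j$ attains $1$ at some interior $x_j \in \Om$, and I need a uniform interior modulus of continuity together with uniform boundary decay to keep $x_j$ away from $\p\Om$, so that along a subsequence $x_j \to x^* \in \Om$ with $v^*(x^*) = 1$. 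Under (\ref{sec2.450})(i), Condition~C (coercivity) is what supplies this non-degeneracy, playing the role Harnack's inequality would play if it were available.

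For part (ii), Condition~D collapses the PDE for a radial $v(r) = v(|x - z|)$ centered at the center $z$ of the ball $\Om = B_R$ to the ODE in (\ref{sec2.46}). I then run the entire blow-up inside the subclass of radial functions: positive radial solutions $u_{\lam_j}$ are supplied by the explicitly radial construction in Theorem~\ref{sec7.9}, normalization reduces compactness and non-degeneracy of the limit to a one-dimensional ODE problem amenable to shooting/phase-plane analysis, and boundary decay is again handled by the uniform outer-ball barrier from (\ref{sec1.10}). This radial reduction sidesteps the Harnack obstruction, which is exactly the reason Condition~D is enlisted whenever (\ref{sec2.450})(ii) holds.
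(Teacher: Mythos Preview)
Your blow-up strategy for part (i) is the same as the paper's, but there is a genuine gap in how you obtain compactness. You repeatedly appeal to ``the Section~7 estimates'' for local equicontinuity of the $v_j$, yet Section~7 (Theorem~\ref{sup1}) only gives Lipschitz dependence of $u_\lam(x)$ on the parameter $\lam$ for fixed $x$; it says nothing about spatial regularity. The paper instead proves, as a preliminary lemma (Lemma~\ref{sec9.01}), a Harnack inequality and a H\"older estimate for nonnegative super-solutions, using the sub-solution barrier $\psi(x)=w(y)\big(1-(r/4R)^\beta\big)$ with $\beta=2-\bar s$, which works precisely because (\ref{sec2.450})(i) makes (\ref{sec2.61}) available. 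That lemma simultaneously gives Arzel\`a--Ascoli compactness and the non-degeneracy $v>0$ of the limit. Your remark that Condition~C ``plays the role Harnack's inequality would play if it were available'' is misplaced here: under (\ref{sec2.450})(i) Harnack \emph{is} available and is exactly what the paper uses. Also, your argument that $M_j\to\infty$ is more involved than needed; this is immediate from Theorem~\ref{sec7.17}(iii).

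For part (ii) your proposed route (blow-up within the radial class, then ODE/shooting analysis) is genuinely different from the paper's. The paper does \emph{not} pass to a limit along $\lam_j\uparrow\lamo$. Instead it fixes $\lam<\lam_R$ on $B_R$, uses the scaling law $\lam_{R_1}R_1^\g=\lam_{R_2}R_2^\g$ to identify the ball $B_{\hat R}$ on which $\lam$ is the first eigenvalue, and then shows that the radial solutions on an increasing family of balls $B_{R_\ell}\nearrow B_{\hat R}$ (each with $\lam$ fixed) are successive \emph{extensions} of one another, with boundary values $\dl_\ell\downarrow 0$ forced by Theorem~\ref{sec7.15}. A final rescaling brings the eigenfunction back to $B_R$. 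This avoids any ODE analysis and any compactness argument. Your approach may well be workable (radial monotonicity plus Helly should give a limit, and $v_j(o)=1$ handles non-degeneracy), but you would still need to supply the boundary barrier and verify continuity of the limit at $r=R$; the ``shooting/phase-plane'' comment is too vague to count as done.
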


At this time, it is not clear to us as to how to extend part (ii) to general domains. Also, our work does not address whether $\lamo$ is simple or isolated. 

We describe the lay out of the work. In Section 2, we include definitions, notations and some useful calculations. Section 3 contains examples and some further discussion.
Section 4 presents comparison principles when $H$ satisfies condition $A$ and are of some what general nature. The remaining work is divided into two parts. Part I has Sections 5 and 6. Sections 7-10 are in Part II. Section 5 lists additional comparison principles under the conditions $A$ and $B$ and the proof of Theorem \ref{sec4.3}. We also include a change of variables formula, important for Theorem \ref{sec8.10}. Sections 6-10 we assume that $H$ satisfies $A,\;B$ and $C$.
The proofs of Theorems \ref{sec5.10} and \ref{sec5.13} are in Section 6. Section 7 contains a discussion of questions related to the problem (\ref{sec2.49}) and shows that solutions $u_\lam$ are Lipschitz continuous in $\lam$. Proofs of Theorems \ref{sec7.9} and \ref{sec7.171} are in Section 8. Theorem \ref{sec8.10} is proven in Section 9. 
We present a proof of Theorem \ref{sec9.00} in Section 10. 

\section{Additional notations, definitions and calculations }

We introduce additional notations and definitions.
We use
$o$ to denote the origin. By $B_s(p),\;s>0$, we mean the ball of radius $s$ centered at $p$. In this work, all differential equations and inequalities will be understood in the sense of viscosity, see below and \cite{CIL}. We assume through out that $H\in C(\IR^n\times S^n, \IR)$ and satisfies condition $A$, see (\ref{sec2.4}). 

We define the notion of a viscosity solution $u$ to the following in $\Om$,
\eqRef{sec2.2}
H(Du, D^2u)+f(x,u)=0\;\;\;\mbox{in $\Om$ and $u=h$ on $\p\Om$},
\ee
where $f\in C(\Om\times \IR, \IR)$ and $h\in C(\p\Om)$. 

A function $u\in usc(\Om)$ is said to be a viscosity sub-solution of the equation in (\ref{sec2.2}), in $\Om$, or solves $H(Du, D^2u)+f(x,u)\ge 0$, in $\Om$, if the following holds. For any $\psi\in C^2(\Om)$ such that  $u-\psi$ has a maximum at a point $y\in \Om$, we have
$$H(D\psi(y), D^2\psi(y))+f(y, u(y))\ge 0.$$
Similarly, $u\in lsc(\Om)$ is said to be a viscosity super-solution of the equation (\ref{sec2.2}) or solves $H(Du, D^2u)+f(x,u)\le 0$, in $\Om$, if, for any $\psi\in C^2(\Om)$ such that $u-\psi$ has a minimum at $y\in \Om$, we have
$$H(D\psi(y),D^2\psi(y))+f(y, u(y))\le 0.$$
A function $u\in C(\Om)$ is a viscosity solution of if it is both a sub-solution and a super-solution. 

We define $u\in usc(\overline{\Om})$ to be a viscosity sub-solution to the problem (\ref{sec2.2})
if $u$ is a sub-solution in $\Om$ and $u\le h$ on $\p\Om$. Similarly, $u\in lsc(\overline{\Om})$ is a super-solution of (\ref{sec2.2}) if $u$ is a super-solution in $\Om$
and 
$u\ge h$ on $\p\Om$. We define $u\in C(\overline{\Om})$ to be a solution to (\ref{sec2.2}), if it is both a sub-solution and a super-solution of (\ref{sec2.2}).

In this work, we will utilize radial sub-solutions and super-solutions. We discuss (\ref{sec2.450}) in this context. Let $v(x)=v(r)$ where $r=|x-z|$, for some $z\in \IR^n$. Set $e=(e_1,e_2,\cdots,n)$ where $e_i=(x-z)_i/r,\;i=1,2,\cdots, n$. Then for $x\ne z$, 
\eqRef{sec2.5}
H(Dv, D^2v)=H\left(v^{\prime}(r)e, \frac{v^{\prime}(r)}{r}\left(I-e\otimes e\right)+v^{\prime\prime}(r) e\otimes e \right),
\ee
where $I$ is the $n\times n$ identity matrix. 
We now impose conditions $A,\;B$ and $C$ on $H$. Recall (\ref{sec2.41})-(\ref{sec2.450}), 
$$k=k_1+k_2,\;\;\g=k_1+2k_2\;\;\;\mbox{and}\;\; \al=\g/k.$$ 
Then (\ref{sec2.5}) reads
\eqRef{sec2.51}
H(Dv, D^2v)=\frac{|v^\prime(r)|^k}{r^{k_2}}H\left[ e, \pm \left\{ I- \left(1-\frac{rv^{\prime\prime}(r)}{v^\prime(r)}\right) e\otimes e\right\} \right].
\ee

Take $v(r)=c\pm d r^{\beta}$, where $d>0$ and $\beta>0.$ Using (\ref{sec2.51}). We obtain, in $r>0$,
\bea\label{sec2.6}
H(Dv, D^2 v)&=&H\left[ \pm d \beta r^{\beta-1} e, \pm d \beta r^{\beta-2} \{ I-(2-\beta) e\otimes e\} \right] 
\nonumber\\
&=&d^k \beta^k r^{(\beta-1)k_1+(\beta-2)k_2} H\left[e, \pm\left\{I-(2-\beta)e\otimes e\right\}\right]\nonumber\\
&=& (d\beta)^k r^{k\beta-\g}H\left[e, \pm \left\{I-(2-\beta)e\otimes e\right\}\right].
\eea
If $v=c\pm d r^{-\beta}$, where $d>0$ and $\beta>0,$ then (\ref{sec2.6}) yields
\eqRef{sec2.601}
H(Dv, D^2 v)=\frac{(d\beta)^k}{ r^{k\beta+\g}}H\left[e, \mp \left\{I-(\beta+2)e\otimes e\right\}\right].
\ee
If (\ref{sec2.450})(i) holds and $v^{\pm}=c\pm d r^\beta$ with $\beta=2-\bar{s}>0$ ($0<\beta<1$) then (\ref{sec2.461}) and (\ref{sec2.6}) imply
\bea\label{sec2.61}
&&H(Dv^+, D^2 v^+)=(d\beta)^k r^{k\beta-\g}H\left(e, I-\bar{s}e\otimes e\right)\le  -\frac{(d\beta)^k|m_2(\bar{s})|}{ r^{\g-k\beta}}<0,\;\;\mbox{and}\\
&&H(Dv^-, D^2 v^-)= (d\beta)^k r^{k\beta-\g}H\left(e,  \bar{s}e\otimes e-I\right)\ge \frac{(d\beta)^k|m_2(\bar{s})|}{ r^{\g-k\beta}}>0. \nonumber
\eea
If (\ref{sec2.450})(ii) holds and $v^{\pm}=c\pm dr^{-\beta}$ with $\beta>\bar{s}-2>0$, then (\ref{sec2.461}) and (\ref{sec2.601}) lead to
\bea\label{sec2.62}
&&H(Dv^+, D^2 v^+)=\frac{(d\beta)^k}{ r^{k\beta+\g}}H\left(e, \bar{s}e\otimes e-I\right)\ge  \frac{(d\beta)^k|m_2(s)|}{ r^{k\beta+\g}}>0, \;\;\mbox{and}\\
&&H(Dv^-, D^2 v^-)= \frac{(d\beta)^k}{ r^{k\beta+\g}}H\left(e, I-\bar{s}e\otimes e\right)   \le  -\frac{(d\beta)^k|m_2(s)|}{ r^{k\beta+\g}}<0, \nonumber
\eea
where $s=\beta+2$. As a second application, set $\beta=\al=\g/k$ in (\ref{sec2.6}) (see (\ref{sec2.45})) and take $v^{\pm}=c\pm dr^\al,\;d>0,$ to obtain
\bea\label{sec2.7}
&&H(Dv^+, D^2v^+)=(d\al)^{k} H\left(e, I-\frac{k_1}{k}e\otimes e \right)\ge (d\al)^k m_1=\left(\frac{d}{\s}\right)^k>0,\;\;\mbox{and}\nonumber\\
&&H(Dv^-, D^2v^-)=(d\al)^{k} H\left(e, \frac{k_1}{k}e\otimes e-I \right)\le -(d\al)^k m_1=-\left(\frac{d}{\s}\right)^k<0.
\eea

\begin{rem}\label{sec2.8} Our results on existence use Perron's method, see \cite{CIL}. The idea is as follows. Consider the problem of showing the existence of a solution $u\in C(\overline{\Om})$ of 
$$H(Du, D^2u)+f(x,u)=0,\;\;\mbox{in $\Om$ and $u=h$ on $\p\Om$,}$$
where $h\in C(\p\Om)$. Assume that the above admits a comparison principle. Let $\vep>0$, be a given small number. For each $y\in \p\Om$, we construct
(i) a sub-solution $v$ such that $v(y)=h(y)-\vep$ and $v\le h$ on $\p\Om$, and (ii) a super-solution $w$ such that
$w(y)=h(y)+\vep$ and $w\ge h$ on $\p\Om$. This implies existence of a solution $u$. $\Box$
\end{rem}

\section{Examples and further comments}

We discuss some examples to which our results apply. Let $s\in \IR$ and the vector $e\in \IR^n$ be such that $|e|=1$. Set $r=|x|,\;\forall x\in \IR^n$.

{\bf (i) $p$-Laplacian and the pseudo $p$-Laplacian:} The $p$-Laplacian $\D_p,\;p\ge 2,$ can be written as 
$D_p u=|Du|^{p-2}\D u+(p-2)|Du|^{p-4} \D_\infty u$, where $\D_\infty u=\sum_{i,j=1}^n D_iuD_ju D_{ij}u$ is the infinity-Laplacian. It is easy to see that
$H(e, I-se\otimes e)=(n+p-2)-(p-1)s.$

We consider a more general version i.e., $H(Du, D^2u)=|Du|^{q}\D u+a|Du|^{q-2} \D_\infty u,\;q\ge 0,$ then
$H(e, I-se\otimes e)=n-s+a(1-s),\;\forall s.$ Thus, 
$$\min(1,\;1+a)\le \frac{H(e, I-s e\otimes e)}{n-s}\le \max(1, 1+a),\;\;\forall a>-1\;\mbox{and}\;\forall s\le 1.$$

If $a>-1$ then conditions A, B, C and D, in Section 1, are satisfied. Hence, all the results of this work hold. 

Next we discuss a version of the pseudo $p$-Laplacian, which we denote by $\D_{p}^s$, where 
$$\D_{p}^su=|Du|^q\sum_{i=1}^n |D_iu|^p D_{ii}u,\;\;p,\;q\ge 0,\;\;\;\mbox{and} \;\;H=\D_p^s.$$
We observe, using Holder's inequality and that $|e_i|\le 1$, that $\sum_{i=1}^n |e_i|^{p+2}\le \sum_{i=1}^n|e_i|^p,$
\ben
\min\left(1,\; n^{(2-p)/2}\right)\le \sum_{i=1}^n |e_i|^p\le n,\;\;\mbox{and}\;\;\sum_{i=1}^n|e_i|^p\le \left( \sum_{i=1}^n |e_i|^{p+2}\right)^{p/(p+2)} n^{2/(p+2)}.
\een
Clearly, $H(e, I-se\otimes e)=\sum_{i=1}^n|e_i|^p- s\sum_{i=1}^n |e_i|^{p+2}$. Using the above, if $s>0$ then
\ben
\frac{1-s}{n^{|2-p|/2}}\le H(e, I-se\otimes e)&\le& \left(\sum_{i=1}^n|e_i|^{p+2}\right)^{p/(p+2)} 
\left[ n^{2/(p+2)} -s \left(\sum_{i=1}^n |e_i|^{p+2} \right)^{2/(p+2)}\ \right]\\
&\le & \left[ n^{2/(p+2)} -\frac{s}{n^{p/(p+2)}} \right] = \frac{n-s}{n^{p/(p+2)}}.
\een
If $e_i=1$, for some $i$, then $H(e, I-se\otimes e)=1-s$. Also, if $e_i=k^{-1/2}$, for $i=1,2,3,\cdots, k,$ and $e_i=0,\;i=k+1,\cdots, n,$ then $H(e, I-se\otimes e)=k^{-p/2}(k-s),\;k=1,\cdots, n$.

Conditions A, B and C hold and hence, Theorems 1.1-1.5 follow.
The operator does not have radial symmetry. However, a proof of the Harnack inequality and local Lipschitz regularity 
may be found in \cite{BK} (see Theorem 2.9 and Remark 2.11), also see \cite{B}. Clearly, Theorem 1.6 holds, see Remark \ref{sec8.14}. Moreover, a version of Lemma \ref{sec9.01} holds. Also, $H(Dr, D^2r)=r^{-1}H(e, I-e\otimes e)\ge 0$, the proof in Subsection I (take $\beta=1$ in (b)) shows that
Theorem 1.7 also holds.

{\bf (ii) $\infty$-Laplacian and a related operator:} Set $H(Du, D^2u)=\D_\infty u=\sum_{i,j=1}^nD_iu D_juD_{ij}u$. Thus,
$$H(e, I-se\otimes e)=\sum_{i,j=1}^n e_i e_j\left( \delta_{ij}-se_ie_j\right)=\sum_{i=1}^n e_i^2-s \left( \sum_{i=1}^n e_i^2\right)^2=1-s. $$
Clearly, all the conditions are met and all the results stated in Section 1 hold. 

For the following, we consider $q\ge 0$ and define
$H(Du, D^2u)=\sum_{i,j=1}^n|D_iu|^q|D_ju|^q D_i u D_j uD_{ij}u.$
Then
\ben
H(e, I-se\otimes e)=\sum_{i=1}^n |e_i|^{2q+2}-s \left(\sum_{i=1}^n |e_i|^{q+2} \right)^2.
\een
If $s\le 0$ then $H\ge 0$. Taking $s\ge 0$, writing $q+2=(q+1)+1$ and observing that $(\sum_{i=1}^n |e_i|^{q+2})^2\le \sum_{i=1}^n |e_i|^{2q+2}$, we get
$$H(e, I-se\otimes e)\ge (1-s) \sum_{i=1}^n |e_i|^{2q+2}\ge\frac{1-s}{n^{q}}.$$ 
Next, setting $\tht=(2q+2)/(q+2)$ and using $\sum_{i=1}^n |e_i|^{2q+2}\le \left(\sum_{i=1}^n |e_i|^{q+2} \right)^{\tht}$, we get
\ben
H(e, I-se\otimes e)&\le&
 \left(\sum_{i=1}^n |e_i|^{q+2} \right)^{\tht}\left(1-s\left(\sum_{i=1}^n |e_i|^{q+2} \right)^{2/(q+2)}\right)\\
 &\le &1-\frac{s}{ n^{q/(q+2)}}
 \een
Thus, Theorems 1.1-1.5 hold. The operator has no radial symmetry.
A Harnack's inequality and local Lipschitz continuity may be worked out 
along the lines of Theorem 2.9 and Remark 2.11 in \cite {BK}. Clearly, Theorem 1.6 holds.
Observing that $H(Dr, D^2r)=r^{-1}H(e, I-e\otimes e)\ge 0$, one can use the proof in Subsection I (take $\beta=1$ in (b)) to show that
Theorem 1.7 also holds.

{\bf (iii) Pucci operators:} Let $a_i$ denote an eigenvalue of the matrix $D^2u$. For $0<\lam\le\Lambda$ and $q\ge 0$ define  
\ben
M^{+,q}_{\lam, \Lam}(u)=|Du|^q\left( \Lam\sum_{a_i\ge 0} a_i+\lam \sum_{a_i\le 0}a_i\right)\;\;\mbox{and}\;\;M^{-,q}_{\lam, \Lam}(u)=|Du|^q\left( \lam\sum_{a_i\ge 0} a_i+\Lam \sum_{a_i\le 0}a_i\right).
\een
For any $e$, the eigenvalues of $I-se\otimes e$ are $1$, with multiplicity $n-1$, and $1-s$. Set $H^{\pm}(Du, D^2u)=M^{\pm, q}_{\lam, \Lam}(u)$ and observe that $H^+(e, \pm(I-se\otimes e))=- H^-(e, \mp(I-se\otimes e)).$
If $s\le 1$, then
\ben
H^+(e, I-se\otimes e)=\Lam(n-s)\;\;\;\mbox{and}\;\;\;H^+(e, s e\otimes s -I)=-\lam (n-s).
\een
If $s>1$, then
\ben
H^+(e, I-se\otimes e)=\Lam(n-1)+\lam(1-s)\;\;\mbox{and}\;\;\;H^+(e, s e\otimes s -I)=-\lam (n-1)-\Lam(1-s).
\een
The operators $H^{\pm}$ are radially symmetric.
The conditions A, B, C and D are satisfied and Theorems 1.1-1.5, 1.6(b) and 1.7(ii) hold. 

We can also consider the maximal and minimal Pucci operators \cite{GT}(Chap 17). Let
$$J=\{M(x)\in S^n:\;\sum_{i,j=1}^n M_{ij}(x)\eta_i\eta_j\ge a|\eta|^2\;\mbox{and}\;\sum_{i=1}^n M_{ii}(x)=1\}\;\;\mbox{and}\;\;0<a\le 1/n.$$
We set $H^{+(-)}(Du, D^2u)=\sup(\inf)_{M(x)\in J} |Du|^{k_1}Trace(M(x) D^2u),\;x\in \Om.$  
Then $H^+(p, X)=-H^-(p,-X).$ Set $E=a(n-s)$ and $F=(1-na)$. Then for any $e$ (see \cite{GT}), 
\ben
&&H^+(e, I-se\otimes e)=E+F,\;s\ge 0,\;\;\mbox{and}\;\;H^+(e, I-se\otimes e)=E+F(1-s),\;s\le 0,\\
&&H^-(e, I-se\otimes e)=E+F(1-s),\;\;s\ge 0,\;\mbox{and}\;\;H^-(e, I-se\otimes e)=E+F, \;s\le 0.
\een
Conditions A, B, C and D are satisfied and Theorems 1.1-1.5, 1.6(b) and 1.7(ii) hold. See \cite{QS} in this context.
\vsp
\section{Comparison principles under Condition A}

We assume $H$ satisfies condition A (see (\ref{sec2.4})) i.e., $H(p,X)$ is continuous on $\IR^n\times S^n$, $H(p,O)=0$ and, for any $X,\;Y\in S^n$ with $X\le Y,$ we have $H(p,X)\le H(p,Y),$ $\forall p\in \IR^n.$ The section begins with a version of a comparison principle that is used often in this work, also see \cite{BL}. 

\begin{thm}{(Comparison Principle)}\label{sec3.1} Let $f:\Om\times \IR\rightarrow \IR$ and $g:\Om\times \IR\rightarrow \IR$ be continuous. 
Suppose that $u\in usc(\overline{\Om})$ and $v\in lsc(\overline{\Om})$ satisfy in the viscosity sense,
$$H(Du, D^2u)+f(x,u(x))\ge 0\;\;\;\mbox{and}\;\;\;H(Dv, D^2v)+g(x,v(x))\le 0,\;\;\;\mbox{in $\Om.$}$$
If $\sup_{\Om}(u-v)>\sup_{\p\Om}(u-v)$ then there is a point $z\in \Om$ such that
$$(u-v)(z)=\sup_{\Om}(u-v)\;\;\;\mbox{and}\;\;\;\;g(z,v(z))\le f(z, u(z)).$$
Equivalently, if $\inf_{\Om}(v-u)<\inf_{\p\Om}(v-u)$ then there is a point $z\in \Om$ such that
$(v-u)(z)=\inf_{\Om}(v-u)\;\mbox{and}\;g(z,u(z))\le f(z,v(z)).$
\end{thm}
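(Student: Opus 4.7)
The plan is to use the classical doubling-of-variables technique of Crandall--Ishii, adapted to the setting where $H$ need only be monotone in $X$ (Condition A) and continuous jointly in $(p,X)$.

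First, let $M = \sup_\Om(u-v)$ and assume by hypothesis $M > \sup_{\p\Om}(u-v)$; in particular $M$ is finite since $u$ is usc and $-v$ is usc on the compact set $\overline{\Om}$. For each $\varepsilon>0$ set
\[
\Phi_\varepsilon(x,y) = u(x) - v(y) - \frac{|x-y|^2}{2\varepsilon},
\]
and let $(x_\varepsilon,y_\varepsilon)\in \overline{\Om}\times\overline{\Om}$ be a maximizer (existence by upper semicontinuity of $\Phi_\varepsilon$). Standard estimates show $|x_\varepsilon - y_\varepsilon|^2/\varepsilon \to 0$ and that any limit point $z$ of $\{x_\varepsilon\}$ (equivalently of $\{y_\varepsilon\}$) satisfies $(u-v)(z) = M$, together with $u(x_\varepsilon)\to u(z)$ and $v(y_\varepsilon)\to v(z)$ along a subsequence. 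The strict inequality $M>\sup_{\p\Om}(u-v)$ then forces $z\in\Om$, and hence $x_\varepsilon,y_\varepsilon \in \Om$ for all small $\varepsilon$.

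Next, I would invoke the Crandall--Ishii lemma applied to the penalty $|x-y|^2/(2\varepsilon)$ at the interior maximum $(x_\varepsilon,y_\varepsilon)$. Setting $p_\varepsilon = (x_\varepsilon - y_\varepsilon)/\varepsilon$, this furnishes matrices $X_\varepsilon,Y_\varepsilon \in S^n$ such that $(p_\varepsilon, X_\varepsilon) \in \overline{J}^{2,+}u(x_\varepsilon)$, $(p_\varepsilon, Y_\varepsilon)\in\overline{J}^{2,-}v(y_\varepsilon)$, and the matrix inequality $X_\varepsilon \le Y_\varepsilon$ holds (this is the standard consequence when the penalty is quadratic; one does not need any strict separation between the jets). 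The viscosity inequalities for $u$ and $v$ then read
\[
H(p_\varepsilon, X_\varepsilon) + f(x_\varepsilon, u(x_\varepsilon)) \ge 0,
\qquad
H(p_\varepsilon, Y_\varepsilon) + g(y_\varepsilon, v(y_\varepsilon)) \le 0.
\]
Using Condition A, the inequality $X_\varepsilon \le Y_\varepsilon$ gives $H(p_\varepsilon, X_\varepsilon) \le H(p_\varepsilon, Y_\varepsilon)$ regardless of the value of $p_\varepsilon$. Subtracting the two inequalities therefore yields
\[
g(y_\varepsilon, v(y_\varepsilon)) \le f(x_\varepsilon, u(x_\varepsilon)).
\]
Passing to the limit along the subsequence for which $x_\varepsilon,y_\varepsilon \to z$ and using continuity of $f,g$ together with $u(x_\varepsilon)\to u(z)$ and $v(y_\varepsilon)\to v(z)$ gives $g(z,v(z)) \le f(z,u(z))$, as required. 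The equivalent formulation in terms of $\inf_\Om(v-u)$ follows by relabeling $(u,v)\mapsto(-v,-u)$ and $(f,g)\mapsto(-g,-f)$.

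The main obstacle is the degenerate case $p_\varepsilon=0$ (which can occur if $x_\varepsilon = y_\varepsilon$): many comparison arguments require the ellipticity to give a strict sign at $p=0$, but here we only have monotonicity in $X$. This is precisely handled by the explicit stipulation in Condition A that $H(\cdot,X)$ be continuous at $p=0$ and that $H(p,\cdot)$ be monotone for every $p$, so that $H(p_\varepsilon,X_\varepsilon)\le H(p_\varepsilon,Y_\varepsilon)$ remains valid at $p_\varepsilon=0$; no further ellipticity is needed, since we are extracting a pointwise inequality on $f$ and $g$, not a strict contradiction. The only mild technical points are the standard Crandall--Ishii limit arguments (convergence of maximizers and of the function values via semicontinuity), which I would cite from \cite{CIL} rather than reproduce.
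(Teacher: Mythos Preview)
Your proposal is correct and follows essentially the same route as the paper: doubling of variables with the quadratic penalty, the Crandall--Ishii lemma to produce jets with $X_\varepsilon\le Y_\varepsilon$, monotonicity of $H$ in $X$ (Condition A) to chain the viscosity inequalities, and passage to the limit using continuity of $f,g$ and the standard convergence of maximizers. If anything, your write-up is more careful than the paper's outline (which cites \cite{CIL} for the details), particularly in noting explicitly that $z\in\Om$ forces $x_\varepsilon,y_\varepsilon\in\Om$ for small $\varepsilon$ and in addressing the $p_\varepsilon=0$ case.
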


\begin{proof} We provide an outline, see \cite{CIL}. We prove part (a), the proof of part (b) follows similarly. 
Set $M=\sup _{\Om}(u-v)$; define, for $\vep>0$,
\be
w_{\vep}(x,y):=u(x)-v(y)-\frac{1}{2\vep}|x-y|^2,\;\;\;\forall(x,y)\in \Om\times \Om.
\ee
Set $M_{\vep}:=\sup_{\Om\times \Om}w_{\vep}(x,y)$, and let $(x_{\vep},\;y_{\vep})\in \overline{\Om}\times\overline{\Om}$ be such that $M_{\vep}$ is attained at $(x_{\vep},\;y_{\vep}).$

There is a $z\in \overline{\Om}$ such that $x_{\vep}$ and $y_{\vep}\rightarrow z,$ as $\vep\rightarrow 0$, and $M=(u-v)(z).$ Since $(x_{\vep},y_{\vep})$ is a point of maximum of $w_{\vep}(x,y)$, there exist
$X_{\vep}$ and $Y_{\vep}$ such that $((x_{\vep}-y_{\vep})/\vep, X_{\vep})\in \bar{J}^{2,+}u(x_{\vep})$ and $((x_{\vep}-y_{\vep})/\vep, Y_{\vep})\in \bar{J}^{2,-}v(y_{\vep})$. Moreover, we have $X_{\vep}\le Y_{\vep}$, and using the definitions of $\bar{J}^{2,+}$ and $\bar{J}^{2,-}$, we see that
\eqRef{sec3.3}
-f(x_{\vep},u(x_{\vep}))\le H((x_\vep-y_\vep)/\vep, X_\vep)  \le H((x_\vep-y_\vep)/\vep, Y_\vep) \le -g(y_{\vep}, v(y_{\vep})).
\ee
Now let $\vep\rightarrow 0$ to conclude that $g(z,v(z))\le f(z,u(z)).$  \end{proof}

As a consequence of Theorem \ref{sec3.1}, we state a version of the strong maximum principle that holds under some restrictions on $f$ and the boundary data. See \cite{BDA} for a more general version.

\begin{lem}\label{sec3.7}{(Maximum Principle)} Let $f\in C(\Om\times \IR, \IR)$. Let $J=\{c\in \IR\;:\;f(x, c)=0,\;\mbox{for some $x\in \Om$}\}.$
Call $c_{\inf}=\inf J$, $c_{\sup}= \sup J$ and assume that $-\infty<c_{\inf}\le c_{\sup}<\infty$. 

(i) Suppose that $f\le 0$ and $u\in usc(\overline{\Om})$ solves 
$H(Du, D^2u)+f(x,u)\ge 0,\;\mbox{in $\Om$}.$\\
If $\sup_{\p\Om} u> c_{\sup}$ or $\sup_{\Om} u<c_{\inf}$ then 
$$u(x)<\sup_{\p\Om}u,\;\forall x\in \Om, \;\;\;\mbox{and}\;\;\;\sup_{\overline{\Om}} u=\sup_{\p\Om} u.$$

(ii) Suppose that $f\ge 0$ and $u\in lsc(\overline{\Om})$ solves $H(Du, D^2u)+f(x,u) \le 0,\;\mbox{in $\Om$}.$
If $\inf_{\p\Om}u< c_{\inf}$ or $\inf_{\Om} u>c_{\sup}$ then
$$u(x)>\inf_{\p\Om} u,\;\forall x\in \Om,\;\;\;\mbox{and}\;\;\;\inf_{\overline{\Om}} u=\inf_{\p\Om} u$$
If in part (i) $f<0$, and in part (ii) $f>0$ then the corresponding conclusions hold without the stated restrictions on $u$ in $\p\Om$ and $\Om$.
\end{lem}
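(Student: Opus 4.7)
The plan is to exploit a single elementary consequence of the viscosity sub-solution definition: if $u$ satisfies $H(Du,D^2u)+f(x,u)\ge 0$ in $\Om$ and has a local maximum at $z_0\in\Om$, then the constant $\psi\equiv u(z_0)$ is an admissible $C^2$ test function (since $u-\psi$ is maximal at $z_0$), and the normalization $H(0,O)=0$ yields $f(z_0,u(z_0))\ge 0$. A mirror statement holds for super-solutions at interior minima. This observation, combined with upper semicontinuity on the compact set $\overline{\Om}$, drives the whole argument; Theorem \ref{sec3.1} need not be invoked directly.

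For part (i) under $N:=\sup_{\p\Om}u>c_{\sup}$, I argue by contradiction: if some $x_0\in\Om$ has $u(x_0)\ge N$, then the usc function $u$ attains $\sup_{\overline{\Om}}u\ge N$ on the compact set $\overline{\Om}$; since $u\le N$ on $\p\Om$ while $u(x_0)\ge N$, the attainment must occur at some interior $z_0$ with $u(z_0)\ge N>c_{\sup}$, so $u(z_0)\notin J$ and $f(z_0,u(z_0))<0$ (by $f\le 0$), contradicting the key observation. Hence $u<N$ on $\Om$, giving both stated conclusions. Under the alternative hypothesis $\sup_\Om u<c_{\inf}$, I again suppose $u(x_0)\ge N$ at some interior $x_0$. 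The maximum of $u$ over $\overline{\Om}$ is attained somewhere; if at an interior $z_1$, then $u(z_1)\le\sup_\Om u<c_{\inf}$, so $f(z_1,u(z_1))<0$ and the key observation fails. If the max is attained only on $\p\Om$, then $\sup_{\overline{\Om}}u=N$, forcing $u(x_0)=N\le\sup_\Om u<c_{\inf}$; but then $x_0$ is itself an interior maximum with $u(x_0)<c_{\inf}$, and the same contradiction applies.

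Part (ii) is the symmetric statement: a constant test function touching a super-solution from below at an interior minimum yields $f(z_0,u(z_0))\le 0$, and the hypotheses $\inf_{\p\Om}u<c_{\inf}$ or $\inf_\Om u>c_{\sup}$ are handled in parallel, since $f\ge 0$ together with $u(z_0)\notin J$ forces $f(z_0,u(z_0))>0$. The strict versions (if $f<0$ in (i) or $f>0$ in (ii)) require no hypothesis on extremal values of $u$: the key observation at any interior extremum directly contradicts the strict sign of $f$, so no interior extremum can exist, and the conclusion follows for arbitrary usc/lsc boundary behavior.

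The main obstacle is essentially bookkeeping: handling the split into the two hypotheses in each part, and in particular treating separately the case in which a putative interior extremum strictly exceeds the boundary supremum versus merely equals it. The argument requires no structural hypothesis on $H$ beyond $H(0,O)=0$, and no regularity on $u$ beyond the stated semicontinuity on $\overline{\Om}$.
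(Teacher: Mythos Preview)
Your proof is correct and follows essentially the same route as the paper's: assume an interior maximum $z$ exists, use the hypothesis to ensure $u(z)\notin J$ so that $f(z,u(z))<0$, and then contradict this via the viscosity sub-solution inequality at $z$. The only difference is cosmetic: the paper tests with $\psi_\vep(x)=u(z)+\vep|x-z|^2$ and sends $\vep\to 0$ (using continuity of $H$) to obtain $f(z,u(z))\ge 0$, whereas you test directly with the constant $\psi\equiv u(z_0)$ and invoke only $H(0,O)=0$; your choice is slightly cleaner but the argument is the same.
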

\begin{proof} We prove (i), the proof of (ii) is similar. Suppose that the claim is false i.e., there is a point $z\in\Om$ such that 
$u(z)=\sup_\Om u\ge \sup_{\p\Om} u$. By our hypothesis, $u(z)\notin J$ i.e., $f(z, u(z))\not= 0$. For $\vep>0$, small, define $\psi_\vep(x)=u(z)+\vep |x-z|^2$ in $\Om$. Then $\psi_\vep\in C^2(\Om)$, $(u-\psi_\vep)(z)= 0$ and 
$(u-\psi_\vep)(x)\le -\vep |x-z|^2<0,\;\forall x\in \overline{\Om},\;x\ne z.$
Thus, for any $\vep>0$, $z$ is the only point of maximum of $u-\psi_\vep$ in $\Om$. Using the definition of a viscosity sub-solution, we have 
$$H(D\psi_\vep(z), D^2\psi_\vep(z))+f(z, u(z))=H(0, 2 \vep I)+f(z, u(z))\ge 0.$$
Letting $\vep\rightarrow 0$, we get $0=H(0,O)\ge -f(z, u(z))\ge 0$. Thus, the claim holds.
\end{proof}

Finally, 

\begin{lem}\label{sec3.10} Let $g:\Om\times\IR\rightarrow \IR$ and $h:\Om\times \IR\rightarrow \IR$ be continuous. Suppose that (i) $g(x,t)<h(x,t),\;\forall(x,t)\in \Om\times \IR$ and
at least one of $g(\cdot,t)$ and $h(\cdot,t)$ is non-increasing in $t$, or (ii)  $g=h$ and $g$ is strictly decreasing in $t$. 

Let $u\in usc(\overline{\Om})$ and $v\in lsc(\overline{\Om})$ satisfy 
$H(D u, D^2u)+g(x,u)\ge 0$ and $H(D v, D^2v)+ h(x,v)\le 0$, in $\Om$. If $u\le v$, on $\p\Om$, then $u\le v$ in $\Om$. Moreover,
$$\sup_\Om (u-v)^+=\sup_{\p\Om}(u-v)^+.$$
\end{lem}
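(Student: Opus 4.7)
The plan is to argue by contradiction, reducing everything to a single pointwise inequality via the comparison principle already established in Theorem \ref{sec3.1}, and then exploiting either the strict gap $g<h$ together with monotonicity (case (i)) or the strict monotonicity of $g=h$ (case (ii)) to force the contradiction. The "moreover" statement is in fact the stronger assertion, and I would prove it first; the inequality $u\le v$ in $\Om$ then drops out as the special case $\sup_{\p\Om}(u-v)^+=0$.

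Concretely, suppose toward contradiction that
$\sup_{\Om}(u-v)^+ > \sup_{\p\Om}(u-v)^+.$
The left side is then strictly positive, so $\sup_{\Om}(u-v)=\sup_{\Om}(u-v)^+$, and the right side dominates $\sup_{\p\Om}(u-v)$; consequently
$\sup_{\Om}(u-v) > \sup_{\p\Om}(u-v).$
Thus the hypotheses of Theorem \ref{sec3.1} are met with the roles $f\rightsquigarrow g$ and (the theorem's) $g\rightsquigarrow h$. This yields a point $z\in\Om$ with
$(u-v)(z)=\sup_{\Om}(u-v)>0\quad\text{and}\quad h(z,v(z))\le g(z,u(z)).$
In particular, $u(z)>v(z)$.

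Next I would split into the two cases. In case (i), if $g(\cdot,\cdot)$ is non-increasing in its second argument, then $g(z,u(z))\le g(z,v(z))<h(z,v(z))$, contradicting the pointwise inequality from Theorem \ref{sec3.1}; the subcase where $h$ is non-increasing is symmetric, giving $h(z,v(z))\ge h(z,u(z))>g(z,u(z))$. In case (ii) with $g=h$ strictly decreasing in $t$, one has $g(z,u(z))<g(z,v(z))=h(z,v(z))$, again the desired contradiction. Either way the assumption $\sup_{\Om}(u-v)^+>\sup_{\p\Om}(u-v)^+$ is untenable, so equality holds. Setting $u\le v$ on $\p\Om$ makes the right-hand side zero, and the pointwise inequality $u\le v$ in $\Om$ follows.

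There is no real obstacle; the proof is essentially a bookkeeping exercise once Theorem \ref{sec3.1} is in hand. The only subtlety worth flagging is that it is the single inequality $h(z,v(z))\le g(z,u(z))$ delivered at the extremal point $z$, rather than any doubling-of-variables argument, that does all the work—this is why condition $A$ alone (with no homogeneity or coercivity) is enough here, and why the hypotheses on $(g,h)$ only need to be checked pointwise.
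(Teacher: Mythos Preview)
Your argument is correct. Both you and the paper invoke Theorem \ref{sec3.1} and derive a contradiction at the extremal point from the monotonicity hypothesis, so the core mechanism is the same. The organization differs: the paper first proves the conclusion $u\le v$ under the hypothesis $u\le v$ on $\p\Om$, and then handles the ``moreover'' statement separately by shifting $u$ down by $\mu=\sup_{\p\Om}(u-v)$ so that $u_\mu=u-\mu$ is again a sub-solution (using non-increase of $g$) with $\sup_{\p\Om}(u_\mu-v)=0$, reducing to the first case. You instead go straight for the positive-part identity, using the observation that $\sup_\Om(u-v)^+>\sup_{\p\Om}(u-v)^+$ forces both $\sup_\Om(u-v)>\sup_{\p\Om}(u-v)$ and $(u-v)(z)>0$ at the extremal point, which is exactly what the monotonicity argument needs. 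This is a modest but genuine streamlining: it avoids the shift trick altogether (and hence the implicit case split in the shift depending on whether $g$ or $h$ is the monotone one), and makes the basic comparison a corollary rather than a separate step.
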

\begin{proof}
Suppose that $g$ is non-increasing in $t$ and $\sup_\Om(u-v)>\sup_{\p\Om}(u-v).$ By Theorem \ref{sec3.1}, there is a point $z\in \Om$ such that 
$(u-v)(z)=\sup_\Om(u-v)>0$ and $g(z,u(z)) \ge h(z, v(z))$. Since $u(z)>v(z)$, we get $g(z, v(z))\ge g(z,u(z))\ge h(z, v(z))$, a contradiction. Thus, the claim holds.

Next, set $\mu=\sup_{\p\Om}(u-v)$ and assume $\mu>0$. Define $u_\mu=u-\mu$ and observe that 
$$H(Du_\mu, D^2u_\mu)=H(Du, D^2u)\ge -g(x, u)\ge -g(x, u_\mu),\;\;\mbox{and}\;\;\sup_{\p\Om} (u_\mu-v)=0.$$
If $\sup_{\Om}(u_\mu-v)>0$ then, by Theorem \ref{sec3.1}, there is a point $z\in \Om$ such that $(u_\mu-v)(p)>0$ and
$g(z, u_\mu(z))\ge h(z, v(z))$. Since, $v(z)<u_\mu(z)$, $g(z, v(z))\ge g(z, u_\mu(z))\ge h(z, v(z)),$ a contradiction. Thus the claim holds.
\end{proof}

\centerline{PART I}

\section{Proof of Theorem \ref{sec4.3} and a change of variables result under conditions $A$ and $B$}

In this section, $H$ satisfies conditions A (monotonicity) and B (homogeneity), see (\ref{sec2.4}) and (\ref{sec2.41}).  We show some additional comparison principles including Theorem \ref{sec4.3}, see \cite{BL} and \cite{BD}. We also discuss a change of variables result.  Recall that $k=k_1+k_2$ in (\ref{sec2.42}).

{\bf Proof of Theorem \ref{sec4.3}:} Let $U\subset \Om$ be a compactly contained sub-domain of $\Om$. We assume that $u>0$ somewhere in $U$ and show that $\sup_{U}(u/v)=\sup_{\p U} u/v.$

Let $p\in U$ is such that $u(p)/v(p)=\sup_{U}(u/v)>\sup_{\p U} u/v.$ Set $\tau=u(p)/v(p)$. Since $v>0$, we have $u(p)>0$ and $\tau>0$. Thus, the function
\eqRef{sec4.4}
w(x)=u(x)-\tau v(x)\le 0,\;\;\;x\in \overline{U}.
\ee
Using (\ref{sec2.4}), (\ref{sec2.41}), (\ref{sec2.42}), (\ref{sec4.2}) and $v>0$, we have that
\bea\label{sec4.5}
 H(D\tau v , D^2\tau v)\le -\tau^kh(x,v(x))\le-\tau^{k-m} a(x)(\tau v(x))^m\;\;\;\forall x\in \Om.
\eea
From (\ref{sec4.4}), $w(x)<0$, for any $x\in \p U$, and $\sup_U w=w(p)=0$. Thus, $\sup_U w>\sup_{\p U}w$, and applying 
Theorem \ref{sec3.1} and (\ref{sec4.2}) to $u$ and $\tau v$ (see (\ref{sec4.5}))
there is a $z\in U$ such that 
\eqRef{sec4.6}
\mbox{$w(z)=\sup_U w=0$ and}\;\; g(z, u(z))\ge\tau^k h(z, v(z))\ge  \tau^{k-m}a(z) (\tau v(z))^m>0.
\ee
From (\ref{sec4.6}) we get $u(z)=\tau v(z)>0$ and
\eqRef{sec4.61}
g(z,u(z))\ge \tau^{k-m}a(z) (\tau v(z))^m= \tau^{k-m} a(z) u(z)^m> \tau^{k-m}g(z, u(z))>0.
\ee
We get a contradiction for $k=m$ and part (a)(i) holds. We prove part (a) (ii). Set $\mu_j=\sup_{\p U_j} (u/v)$. By part(i), $\mu_j$'s are increasing and $\mu=\sup_j\mu_j<\infty.$ If $\sup_{\Om}(u/v)>\mu$ then there is a set $U_j$ such that $\sup_{U_j}(u/v)>\mu$. This violates part (a)(i) since $\sup_{\p U_j}(u/v)\le \mu.$ 

From (\ref{sec4.61}), $\tau^{k-m}<1.$ We get a contradiction for part (b). Thus, the theorem holds. $\Box$

\begin{rem}\label{sec4.70}
Let $u,$ $v$, $g$ and $h$ be as in the statement of Theorem \ref{sec4.3}.  Since $v>0$, it follows that that if $u>0$ somewhere in $\Om$ then $u>0$ somewhere on $\p\Om.$
As a result, if $u=0$, on $\p\Om$, then $u\le 0$ in $\Om$. $\Box$
\end{rem}

\begin{rem}\label{sec4.700} The proof of Theorem \ref{sec4.3} can be adapted to the case 
$g(x,t)< a(x) |t|^{k-1} t+f(x)\le h(x,t),\;\forall (x,t)\in \Om\times (0,\infty)$, and where $f(x)\ge 0$. Suppose that
$v>0$ and $u\le v$ on $\p\Om$. If $u-v>0$ in $\Om$ then there is a $\tau>1$ such that
$\sup_\Om(u-\tau v)=0$. Clearly, $u-\tau v< 0$ on $\p\Om$. Arguing as in Theorem \ref{sec4.3} there is a point $p\in \Om$ such that $(u-\tau v)(p)=\sup_\Om(u-\tau v)=0,$ and 
$$g(p, u(p))\ge \tau^kh(p, v(p))\ge a(p) (\tau v(p))^k+\tau^k f(p)\ge a(p) u(p)^k+f(p)>g(p,u(p)).$$ 
This is a contradiction and $u\le v$ in $\Om$. Note the claim holds if we take $0<m\le k$.
A version may be found in \cite{BD} (Theorem 3.6). $\Box$
\end{rem}
 
Next, we extend Theorem \ref{sec4.3} when the condition (\ref{sec4.2}) is relaxed to include the case $g\le h$. 

\begin{lem}\label{sec4.71}
Let $a\in C(\Om),\;a>0,$ $u\in usc(\overline{\Om}),$ and $v\in lsc(\overline{\Om})\cap L^{\infty}(\Om),\;\inf_{\overline{\Om}} v>0.$ 
Assume that $g(x,t)\le a(x) |t|^{k-1} t\le h(x,t),\;\forall(x,t)\in \Om\times (0,\infty)$, where $k$ is as in (\ref{sec2.42}). 
If $u,\;v$ satisfy
$$H(Du, D^2u)+g(x,u) \ge 0,\;\;\;\mbox{and}\;\;\;H(Dv, D^2v)+h(x,v)\le 0,\;\; \mbox{in $\Om$},$$
and $u>0$ somewhere in $\Om$, then $\sup_{\Om}(u/v)= \sup_{\p\Om}(u/v).$
\end{lem}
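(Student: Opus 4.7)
The plan is to reduce the statement to the argument used in Theorem~\ref{sec4.3} by perturbing $v$ so that the inequality $h(x,t)\ge a(x)t^k$ becomes strict, and then pass to the limit. The key observation is that because $\inf_{\overline{\Om}}v>0$, I may set $v^{\vep}:=v-\vep$ for any $0<\vep<\inf_{\overline{\Om}}v$, obtaining a positive $lsc$ function. Since differentiation is translation-invariant, $v^{\vep}$ is a viscosity super-solution of $H(Dw,D^2w)+h_{\vep}(x,w)\le 0$ in $\Om$, where $h_{\vep}(x,t):=h(x,t+\vep)$. For $t>0$ this yields
\[
h_{\vep}(x,t)=h(x,t+\vep)\ge a(x)(t+\vep)^k>a(x)t^k,
\]
exactly the strict inequality that is absent from the hypotheses.

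Arguing by contradiction, I assume $\sup_{\Om}(u/v)>\sup_{\p\Om}(u/v)$; the hypothesis that $u>0$ somewhere forces $\sup_{\Om}(u/v)>0$. A short perturbation estimate, using the $L^{\infty}$ bound and uniform positivity of $v$ together with the upper bound on $u$ (since $u\in usc(\overline{\Om})$), shows that for all sufficiently small $\vep>0$ one still has $\tau_{\vep}:=\sup_{\Om}(u/v^{\vep})>\sup_{\p\Om}(u/v^{\vep})$, and moreover that $\tau_{\vep}$ is attained at some interior point $p_{\vep}\in\Om$ with $u(p_{\vep})>0$. The attainment follows from the fact that $u\cdot(1/v^{\vep})$ is upper semicontinuous on $\{u\ge 0\}$ (both factors are nonnegative and $usc$ there), and the supremum, being strictly positive, can only be realized where $u>0$.

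Next, I run the Theorem~\ref{sec4.3} argument on $u$ and $\tau_{\vep}v^{\vep}$. Condition~B makes $\tau_{\vep}v^{\vep}$ a super-solution of $H(Dw,D^2w)+\tau_{\vep}^k h_{\vep}(x,w/\tau_{\vep})\le 0$, and $u-\tau_{\vep}v^{\vep}$ attains its maximum $0$ at $p_{\vep}$ while being strictly negative on $\p\Om$; Theorem~\ref{sec3.1} then produces $z\in\Om$ with $u(z)=\tau_{\vep}v^{\vep}(z)>0$ and
\[
g(z,u(z))\;\ge\;\tau_{\vep}^k h_{\vep}(z,v^{\vep}(z))\;=\;\tau_{\vep}^k h(z,v(z))\;\ge\;\tau_{\vep}^k a(z)v(z)^k.
\]
Combining with $g(z,u(z))\le a(z)u(z)^k=a(z)\tau_{\vep}^k(v(z)-\vep)^k$ and dividing by $a(z)\tau_{\vep}^k>0$ yields $v(z)^k\le(v(z)-\vep)^k$, which is absurd since $v(z)>v(z)-\vep>0$ and $k>0$. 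Hence $\sup_{\Om}(u/v^{\vep})=\sup_{\p\Om}(u/v^{\vep})$ for every small $\vep>0$.

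To finish, I let $\vep\to 0^+$: since $v$ is bounded and $\inf_{\overline{\Om}}v>0$, the family $u/v^{\vep}$ converges to $u/v$ uniformly on $\overline{\Om}$, so the equality passes to the limit. The main obstacle I foresee is the bookkeeping in the second paragraph, namely verifying both that the strict gap $\tau_{\vep}>\sup_{\p\Om}(u/v^{\vep})$ survives the perturbation and that $\tau_{\vep}$ is actually attained at an interior point. Both of these rely on $\inf_{\overline{\Om}}v>0$ and on the positivity of $u$ somewhere in $\Om$, which together prevent any degeneration as $\vep\to 0^+$.
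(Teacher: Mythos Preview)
Your argument is essentially the paper's: both subtract a small positive constant from $v$ to manufacture a strict gap in the nonlinearity, run the Theorem~\ref{sec4.3} mechanism via Theorem~\ref{sec3.1}, and pass to the limit. The paper phrases the perturbed super-solution inequality as $H(Dv_\tht,D^2v_\tht)+(1+\vep)a(x)v_\tht^k\le 0$ (using $\ell=\sup_\Om v$ to pick $\vep$), while you keep $h$ and shift it, $h_\vep(x,t)=h(x,t+\vep)\ge a(x)(t+\vep)^k>a(x)t^k$; your variant is a shade more direct but otherwise the same idea.

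One organizational remark: your write-up blends two valid arguments. Having assumed $\sup_\Om(u/v)>\sup_{\p\Om}(u/v)$, deduced $\tau_\vep>\sup_{\p\Om}(u/v^\vep)$, and reached the contradiction $(v(z)-\vep)^k\ge v(z)^k$, you are already finished---the last paragraph about $\vep\to 0$ is superfluous, and the sentence ``Hence $\sup_\Om(u/v^\vep)=\sup_{\p\Om}(u/v^\vep)$'' does not follow from what precedes it. Alternatively, drop the opening contradiction assumption on $u/v$, prove directly that for each fixed small $\vep$ one has $\sup_\Om(u/v^\vep)\le\sup_{\p\Om}(u/v^\vep)$ (your computation does exactly this), and then let $\vep\to 0$; that is how the paper arranges it.
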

\begin{proof} Since $\inf_\Om v>0$, we observe that $h(x,v(x))\ge a(x) v(x)^k>0,\;\forall x\in \Om$. Also, the proof of Lemma \ref{sec3.7} shows that $v> \inf_{\p\Om} v$, in $\Om$.

Set $\mu=\inf_{\p\Om}v$, $\ell=\sup_\Om v$, and $v_\tht=v-\tht\mu$, for $0<\tht<1$. Let $\vep>0$ be small, to be determined. Recalling that $h(x,t)\ge a(x) t^k,\;\forall t\ge 0$, we calculate
\bea\label{sec4.72}
H(Dv_\tht, D^2v_\tht)+(1+\vep)a(x) v_\tht^k&\le& h(x,v)\left(\frac{(1+\vep)v_\tht^k}{v^k}-1\right)\nonumber\\
&\le& \left( (1+\vep) \left( \frac{\ell-\tht\mu}{\ell} \right)^k-1 \right)h(x,v).
\eea
Note that in $\ell>0$, $(\ell-\tht\mu)/\ell$ increases in $\ell$. We choose $\vep$ small enough so that
$$H(Dv_\tht, D^2v_\tht)+(1+\vep)a(x) v_\tht^k\le 0,\;\;\mbox{in $\Om$.}$$ 
As done in the proof of Theorem \ref{sec4.3}, we take $\tau=\sup_\Om u/v$ and assume that $\tau>\sup_{\p\Om}u/v$. Working with $u,\;v_\tht$ and $w=u-\tau v_\tht$, we obtain that
there is a $z\in \Om$ such that $w(z)=\sup_U w=0$ and $g(z, u(z))\ge (1+\vep)a(z)(\tau v_\tht(z))^k>0.$
Since $\tau v_\tht(z)=u(z)$, 
$$g(z, u(z))\ge  (1+\vep) a(z) u(z)^k\ge (1+\vep) g(z, u(z))>0.$$
This is a contradiction and $\sup_\Om(u/v_\tht)=\sup_{\p\Om} (u/v_\tht).$ Letting $\tht\rightarrow 0$ proves the claim.
\end{proof}

We now present a result regarding a change of variables which will prove useful in Part II.

\begin{lem}\label{sec4.9}
Let $f\in C(\Om\times \IR,\IR)$ and $\zeta:\IR\rightarrow \IR$, be a $C^2$ function with $\zeta^{\prime}> 0$. Take
$u:\Om\rightarrow \IR$ and set $v=\zeta(u)$. Call $\eta=\zeta^{-1}$. 

(a) If $\zeta$ is convex and $u\in usc(\Om)$ satisfies $H(D u, D^2u)+f(x,u)\ge 0,\mbox{in $\Om$}$, then
$$H(Dv, D^2v)+[\eta^{\prime}(v(x))]^{-k}f(x, \eta(v(x)) \ge 0,\;\;\;\mbox{in $\Om$.}$$

(b) If $\zeta$ is concave and $u\in lsc(\Om)$ and $H(D u, D^2u)+f(x,u)\le 0,\mbox{in $\Om$}$, then
$$H(Dv, D^2v)+[\eta^{\prime}(v(x))]^{-k}f(x, \eta(v(x)) \le 0,\;\;\;\mbox{in $\Om$.}$$
\end{lem}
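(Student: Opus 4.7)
The plan is a standard viscosity change-of-variables argument: turn any $C^2$ test function for $v$ into a $C^2$ test function for $u$ by composing with $\eta$, invoke the sub/super-solution property of $u$, then translate back via the chain rule using the monotonicity (Condition A) and homogeneity (Condition B) of $H$. I will describe part (a) in detail; part (b) is completely symmetric after reversing inequalities. First, since $\zeta$ is $C^2$ with $\zeta' > 0$, the inverse $\eta = \zeta^{-1}$ is $C^2$ with $\eta' = 1/(\zeta' \circ \eta) > 0$ and $\eta'' = -(\zeta'' \circ \eta)/(\zeta' \circ \eta)^3$, so $\zeta$ convex forces $\eta$ concave. Because $\zeta$ is continuous and strictly increasing, $v = \zeta(u)$ is usc whenever $u$ is.

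Next, suppose $\psi \in C^2(\Omega)$ and $v - \psi$ has a local maximum at $y \in \Omega$; after an additive constant I may assume $v(y) = \psi(y)$. The crucial observation is that
$$\tilde\phi(x) := \eta(\psi(x))$$
is a legitimate test function for $u$ from above at $y$: since $\eta$ is strictly increasing, the inequality $v \le \psi$ in a neighborhood of $y$ translates into $u = \eta(v) \le \eta(\psi) = \tilde\phi$, with equality at $y$. The sub-solution inequality for $u$ at $y$ then reads
$$H(D\tilde\phi(y), D^2\tilde\phi(y)) + f(y, u(y)) \ge 0.$$

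Finally, I compute via the chain rule at $y$, using $\psi(y) = v(y)$ and $u(y) = \eta(v(y))$:
$$D\tilde\phi(y) = \eta'(v(y))\, D\psi(y), \qquad D^2\tilde\phi(y) = \eta''(v(y))\, D\psi(y) \otimes D\psi(y) + \eta'(v(y))\, D^2\psi(y).$$
Convexity of $\zeta$ gives $\eta''(v(y)) \le 0$, and since $D\psi(y) \otimes D\psi(y) \ge 0$, I obtain $D^2\tilde\phi(y) \le \eta'(v(y))\, D^2\psi(y)$. Applying Condition A in $X$ and then Condition B (homogeneity of degree $k_2$ in $X$ and of degree $k_1$ in $p$, with the positive scalar $\eta'(v(y))$) yields
$$H(D\tilde\phi(y), D^2\tilde\phi(y)) \le \bigl(\eta'(v(y))\bigr)^{k} H(D\psi(y), D^2\psi(y)).$$
Dividing the combined inequality by $(\eta'(v(y)))^k > 0$ and using $u(y) = \eta(v(y))$ gives exactly the stated viscosity sub-solution inequality for $v$. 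Part (b) follows along the same lines: concavity of $\zeta$ makes $\eta$ convex, maxima become minima under $\eta$, and the two $\le$'s above flip.

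The only point I expect to require care is showing that $\tilde\phi$ is a bona fide test function for $u$ --- this is precisely where the strict monotonicity of $\eta$ is used, and where the convex/concave dichotomy in (a)/(b) enters; everything else is a bookkeeping exercise with the exponents $k_1, k_2$ and the sign of $\eta''$.
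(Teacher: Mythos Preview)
Your argument is correct, but it proceeds by a different mechanism than the paper's. You compose the test function with $\eta$, setting $\tilde\phi=\eta\circ\psi$, and then use Condition~A to discard the rank-one term $\eta''(v)\,D\psi\otimes D\psi$ before invoking homogeneity. The paper instead exploits convexity at the level of the tangent-line inequality $\zeta(t_2)-\zeta(t_1)\ge\zeta'(t_1)(t_2-t_1)$: from $(v-\psi)(x)\le(v-\psi)(p)$ it obtains directly that $u-\psi/\zeta'(u(p))$ has a maximum at $p$, so the test function for $u$ is simply a constant scalar multiple of $\psi$. This linearized test function means no second-derivative term in $\eta$ ever appears, and the passage to the inequality for $v$ uses only Condition~B (homogeneity); monotonicity in $X$ is not needed for this step. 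Your route is the more standard viscosity change-of-variables computation and has the virtue of making transparent exactly where each structural condition on $H$ enters; the paper's shortcut is a bit slicker and sidesteps the minor domain issue of ensuring $\psi$ stays in the range of $\zeta$ near $p$.
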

\begin{proof} We prove part (a). Clearly, $v\in usc(\Om)$. Since $\zeta$ is convex, we have
\eqRef{sec4.91}
\zeta(t_2)-\zeta(t_1)\ge \zeta^{'} (t_1) (t_2-t_1).
\ee
Let $\psi\in C^2(\Om)$ and $p\in \Om$ be such that $v(x)-\psi(x)$ has a maximum at $p$, i.e.,
$(v-\psi)(x)\le (v-\psi)(p).$ Thus, 
$\zeta^{'}(u(p)) (u(x)-u(p))\le \zeta(u(x))-\zeta(u(p))\le \psi(x)-\psi(p)$, see (\ref{sec4.91}).
Rearranging,
\eqRef{sec4.10}
u(x)-\frac{\psi(x)}{\zeta^{'}(u(p))}\le u(p)-\frac{\psi(p)}{\zeta^{'}(u(p))}. 
\ee 
Since $u$ is a sub-solution, we obtain
$$H \left( \frac{D\psi(p)}{ \zeta^{'}(u(p))},\; \frac{D^2\psi(p)}{\zeta^{'}(u(p))} \right)+ f(p, u(p))\ge 0.$$
Using $\zeta^{\prime}(\eta(t))\eta^{\prime}(t)=1$ together with (\ref{sec2.4}), (\ref{sec2.41}) and (\ref{sec2.42})  we rewrite the above as
$$H \left(D\psi(p), D^2\psi(p) \right)+[\eta^{\prime}(v(p))]^{-k} f(p, \eta(v(p)))\ge 0.$$
To prove part (b), we observe that the inequalities in (\ref{sec4.91}) and (\ref{sec4.10}) are reversed. One may now argue similarly to show part (b).
\end{proof}

\begin{rem}\label{sec4.12} Let $g$ and $h$, in $C(\Om,\IR)$, be such that
$g(x,t)\le a(x) t^k\le h(x,t),\forall(x,t)\in \Om\times \IR^+.$ Suppose that $u:\Om\rightarrow \IR^+$ and $\zeta(t)=t^\beta,\;t\ge 0$. Lemma \ref{sec4.9} implies the following.

(i) Let $u\in usc(\Om)$ solve $H(Du, D^2u)+g(x,u) \ge 0$, in $\Om$. If $\beta>1$ then
$H(Dv, D^2v)+\beta^k a(x) v^k\ge 0.$

(ii) Let $u\in lsc(\Om)$ solve $H(Du, D^2u)+h(x,u)\le 0$, in $\Om$.
If $\beta<1$ then 
$H(Dv, D^2v)+\beta^k a(x) v^k\le 0.$ $\Box$
\end{rem}

Next, Theorem \ref{sec4.3}, Lemma \ref{sec4.71} and Remark \ref{sec4.12} imply the following comparison principle.

\begin{lem}\label{sec4.120} Let $g$, $h$ be in $C(\Om, \IR)$ and 
$g(x,t)\le a(x) |t|^{k-1} t\le h(x,t),\;\forall(x,t)\in \Om\times (0,\infty)$, where $a\in C(\Om)$ and $a>0.$ 

Suppose that (i) $0<\beta<1$ is such that
$g(x,t)\le \beta^k a(x) t^k,\;\forall t\ge 0$, and (ii) $\sup_{x\in \Om} |h(x,t)|=0$ iff $t=0$. 
Let $u\in usc(\overline{\Om})$ and $v\in lsc(\overline{\Om})$, $\inf_{\overline{\Om}}v>0,$ solve 
$$H(Du, D^2u)+g(x,u)\ge 0,\;\;\mbox{and}\;\; H(Dv, D^2v)+h(x,v)\le 0,\;\;\mbox{in $\Om$.}$$
Assume that $u>0$ somewhere in $\Om$. If $u\le v$ on $\p\Om$ then $u\le v$ in $\Om$. Also,
\eqRef{sec4.121}
\sup_{\Om}\frac{u}{v^{\beta}}\le \sup_{\p\Om}\frac{u}{v^{\beta}}.
\ee
If $0<\sup_{\p\Om}u\le \inf_{\p\Om}v$, on $\p\Om$, then $u<v$, in $\Om$, and
$u(z)\le (\inf_{\p\Om} v)^{1-\beta}v(z)^\beta,\;\;\forall z\in \Om.$
\end{lem}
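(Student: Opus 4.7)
The plan is to combine the change-of-variables principle in Remark \ref{sec4.12}(ii), which raises $v$ to the power $\beta$, with the ratio comparison from Lemma \ref{sec4.71}, and then to upgrade the resulting non-strict inequality to strict $u<v$ by a standard interior-minimum argument for the lsc super-solution $v$.

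First, if $u\le 0$ throughout $\Om$ then $u\le v$ trivially since $v>0$, so assume $u>0$ somewhere in $\Om$. The chain $g(x,t)\le a(x)|t|^{k-1}t\le h(x,t)$ on $\Om\times(0,\infty)$ is precisely the hypothesis of Lemma \ref{sec4.71}, and applying that lemma to $(u,v)$ gives $\sup_\Om(u/v)=\sup_{\p\Om}(u/v)$. If $u\le v$ on $\p\Om$, then this common value is $\le 1$, so $u\le v$ in $\Om$.

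For the bound (\ref{sec4.121}), set $w=v^\beta$. Since $0<\beta<1$, $\zeta(t)=t^\beta$ is concave on $(0,\infty)$, so Remark \ref{sec4.12}(ii) applied to $v$ gives
$$H(Dw,D^2w)+\beta^k a(x)\,w^k\le 0\quad\text{in }\Om,$$
and $w\in lsc(\overline{\Om})$ with $\inf_{\overline{\Om}}w=(\inf_{\overline{\Om}}v)^\beta>0$. Hypothesis (i) reads $g(x,t)\le \beta^k a(x)t^k$ for $t\ge 0$, so the pair $(u,w)$ satisfies the hypotheses of Lemma \ref{sec4.71} with coefficient $\beta^k a$ and right-hand function $\tilde h(x,t)=\beta^k a(x)t^k$. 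Consequently,
$$\sup_\Om\frac{u}{v^\beta}=\sup_\Om\frac{u}{w}=\sup_{\p\Om}\frac{u}{w}=\sup_{\p\Om}\frac{u}{v^\beta},$$
which is (\ref{sec4.121}).

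Finally, set $\alpha=\sup_{\p\Om}u$ and $\mu=\inf_{\p\Om}v$ and assume $0<\alpha\le\mu$. On $\p\Om$, $u/v^\beta\le \alpha/\mu^\beta\le \mu^{1-\beta}$, and the preceding step yields $u(z)\le \mu^{1-\beta}v(z)^\beta$ for every $z\in\Om$. The one remaining (minor) obstacle is to promote this to strict $u<v$, for which it suffices to show $v>\mu$ in $\Om$. Let $z_0\in\overline{\Om}$ attain the infimum of the lsc function $v$ on the compact set $\overline{\Om}$. If $z_0\in\Om$, then $\psi_\vep(x)=v(z_0)-\vep|x-z_0|^2$ touches $v$ from below at $z_0$, so the viscosity super-solution inequality gives $H(0,-2\vep I)+h(z_0,v(z_0))\le 0$; letting $\vep\downarrow 0$ yields $h(z_0,v(z_0))\le 0$, which contradicts $h(z_0,v(z_0))\ge a(z_0)v(z_0)^k>0$ since $v(z_0)>0$. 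Hence the infimum is attained only on $\p\Om$, and $v(z)>\mu$ for every $z\in\Om$. Then $\mu/v(z)<1$ and $1-\beta>0$ give
$$u(z)\le \mu^{1-\beta}v(z)^\beta=v(z)\bigl(\mu/v(z)\bigr)^{1-\beta}<v(z),$$
which delivers both $u<v$ in $\Om$ and the claimed pointwise estimate.
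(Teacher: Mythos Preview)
Your proof is correct and follows essentially the same route as the paper's: Lemma \ref{sec4.71} for $u\le v$, Remark \ref{sec4.12}(ii) to pass to $w=v^\beta$ and then Lemma \ref{sec4.71} again for (\ref{sec4.121}), and finally the strict interior bound $v>\inf_{\p\Om}v$ to upgrade to $u<v$. The only cosmetic difference is that the paper cites Lemma \ref{sec3.7} for $v>\inf_{\p\Om}v$, whereas you reprove that step directly via the test function $\psi_\vep(x)=v(z_0)-\vep|x-z_0|^2$; both arguments are the same at heart.
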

\begin{proof} By Theorem \ref{sec4.3} and Lemma \ref{sec4.71}, if $u\le v$ on $\p\Om$ then $u\le v$, in $\Om$. 
By Remark \ref{sec4.12} and the lower bound for $h$, $w=v^\beta$ solves
$H(Dw, D^2 w)+\beta^k  a(x) w^k\le 0.$ From Lemma \ref{sec4.71} and the upper bound for $g$ it is seen that (\ref{sec4.121}) holds.
By Lemma \ref{sec3.7}, $v>\inf_{\p\Om}v$. Next, using $u\le v$ in $\Om$, we get for any $z\in \Om$,
$$\frac{u(z)}{v(z)^\beta}\le \sup_{\p\Om} \left(\frac{u}{v^\beta}\right)\le \frac{\sup_{\p\Om} u}{\inf_{\p\Om} v^\beta}\le\frac{\inf_{\p\Om} v}{\inf_{\p\Om} v^\beta} \le \inf_{\p\Om} v^{1-\beta}<v(z)^{1-\beta}.$$
Thus, $u(z)<v(z)$. The above inequality also implies $ u(x)\le \inf_{\p\Om} v^{1-\beta}v^\beta(x), \;\forall x\in \Om.$  
\end{proof}

\section{A priori bounds: Proof of Theorem \ref{sec5.13}}

In this section, we derive some useful a priori bounds for a fairly general class of functions $f(x,t)$. We assume that $H$ satisfies conditions A, B and C, see (\ref{sec2.4})-(\ref{sec2.450}). However, we make no use of (\ref{sec2.450}) in this section.

We state the following version of the maximum principle, see Lemma \ref{sec3.7} in this context.

\begin{lem}\label{sec4.13}{(Maximum principle)} \\
(i) If $u\in usc(\overline{\Om})$ solves $H(Du, D^2u)\ge 0$, in $\Om$, then
$\sup_{\Om}u=\sup_{\p\Om} u$. 
(ii) If $u\in lsc(\overline{\Om})$ solves $H(Du, D^2u)\le 0$, in $\Om$, then
$\inf_{\Om}u=\inf_{\p\Om} u$. 
\end{lem}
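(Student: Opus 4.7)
The plan is to prove (i) by contradiction, using the strict super-solutions supplied by Condition C to rule out an interior maximum of $u$, and to handle (ii) symmetrically using the strict sub-solutions. The key ingredient is already stored in (\ref{sec2.7}): for any $z\in\IR^n$, $c\in\IR$ and $d>0$, the radial function $\phi(x)=c-d|x-z|^{\al}$ (with $\al=\g/k$) satisfies $H(D\phi,D^2\phi)\le-(d/\s)^{k}<0$ at every $x\ne z$, and its companion $\phi(x)=c+d|x-z|^{\al}$ satisfies $H(D\phi,D^2\phi)\ge(d/\s)^{k}>0$. These are the only non-trivial inputs to the proof.

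For (i), suppose towards a contradiction that $M^*:=\sup_{\overline{\Om}}u>M:=\sup_{\p\Om}u$. Upper semi-continuity on the compact set $\overline{\Om}$ gives $u(x^*)=M^*$ for some $x^*\in\overline{\Om}$, and $M^*>M$ forces $x^*\in\Om$. Pick any $z\in\IR^n\setminus\overline{\Om}$; then $r_0:=\min_{\overline{\Om}}|x-z|>0$ and $R:=\max_{\overline{\Om}}|x-z|<\infty$, so the test function $\phi(x):=c-d|x-z|^{\al}$ is $C^{\infty}$ on a neighborhood of $\overline{\Om}$. I would choose $d>0$ small enough that $d(R^{\al}-r_0^{\al})<M^*-M$, and then set $c=M+dR^{\al}$; this ensures $\phi\ge M$ on $\overline{\Om}$ (in particular on $\p\Om$) while $\phi(x^*)\le M+d(R^{\al}-r_0^{\al})<M^*$. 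Then $u-\phi\in usc(\overline{\Om})$ is non-positive on $\p\Om$ and strictly positive at $x^*$, so it attains its maximum on $\overline{\Om}$ at some interior point $y_0\in\Om$. Applying the viscosity sub-solution inequality with the $C^2$ test function $\phi$ at $y_0$ gives $H(D\phi(y_0),D^2\phi(y_0))\ge0$, contradicting (\ref{sec2.7}).

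Part (ii) is entirely symmetric. Assume $m^*:=\inf_{\overline{\Om}}u<m:=\inf_{\p\Om}u$, with $u(x^*)=m^*$ at some $x^*\in\Om$. Use the strict sub-solution $\phi(x)=c+d|x-z|^{\al}$, $z\in\IR^n\setminus\overline{\Om}$; choose $d$ small and $c=m-dR^{\al}$, so that $\phi\le m$ on $\p\Om$ while $\phi(x^*)>m^*$. Then $u-\phi$ attains its minimum on $\overline{\Om}$ at some $y_0\in\Om$, and the super-solution inequality yields $H(D\phi(y_0),D^2\phi(y_0))\le0$, contradicting the strict positivity in (\ref{sec2.7}). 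I expect no essential obstacle; the only point requiring care is that $|x-z|^{\al}$ is $C^2$ only away from $z$ (since $\al=\g/k\le2$ with equality only when $k_1=0$), which is precisely why $z$ is placed outside $\overline{\Om}$, guaranteeing that $\phi$ is a legitimate $C^2$ test function at the interior touching point $y_0$.
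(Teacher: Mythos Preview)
Your proof is correct and follows essentially the same approach as the paper: assume an interior extremum, build a radial strict super-solution (resp.\ sub-solution) centered at a point $z\in\IR^n\setminus\overline{\Om}$, and obtain a contradiction at the interior touching point. The only difference is cosmetic---the paper uses the quadratic profile $w(x)=c-d|x-q|^2$ and computes $H(Dw,D^2w)\le -m_1\rho^{k_1}\bigl(\vep/(R^2-\rho^2)\bigr)^k<0$ directly, whereas you use $\phi(x)=c-d|x-z|^{\al}$ and invoke the pre-computed bound (\ref{sec2.7}); both choices give legitimate $C^2$ test functions on $\overline{\Om}$ once the center is placed outside.
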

\begin{proof} Let $q\in \IR^n\setminus \overline{\Om}$ and $0<\rho<R<\infty$ be such that $\Om \subset B_R(q)\setminus B_\rho(q).$
We prove (i) by contradiction (part (ii) is similar). Let $\vep>0$ and $p\in \Om$ be such that $u(p)\ge \sup_{\p\Om} u+\vep.$ Define
$$w(x)=\sup_{\p\Om} u+ \frac{\vep}{2} \left( \frac{R^2-|x-q|^2}{R^2-\rho^2}\right),\;\;\forall x\in B_R(q)\setminus B_\rho(q).$$
Thus, $\sup_{\p\Om} u\le w(x)\le \sup_{\p\Om}u+\vep/2$, in $B_R(q)\setminus B_\rho(q)$. Clearly, $u(p)-w(p)>0$ and
$u-w\le 0$, on $\p\Om$. Let $z\in \Om$ be a point of maximum of $u-w$ on $\overline{\Om}$. By (\ref{sec2.4}), (\ref{sec2.41}), (\ref{sec2.43}), (\ref{sec2.461}) and (\ref{sec2.45}),
$$H(Dw(z), D^2w(z))= \left( \frac{\vep}{R^2-\rho^2} \right)^k|x-q|^{k_1} H(e, -I)\le -m_1\rho^{k_1}\left( \frac{\vep}{R^2-\rho^2} \right)^k<0, $$
where $e$ is a unit vector in $\IR^n$ and $m_1\le m_1(0)$. We get a contradiction and the claim holds. 
\end{proof}

\begin{rem}\label{sec5.0} Let $u^{\pm}(x)=c\pm d|x|^\al$, $d>0$ and $\al=1+k_2/k$, see (\ref{sec2.43}), (\ref{sec2.461}) and (\ref{sec2.45}). 

We remark that (\ref{sec2.7}) holds in the viscosity sense at $r=0$. Suppose that $\psi\in C^2$ is such that $(u^+-\psi)(x)\le (u^+-\psi)(o).$ Then $d|x|^\al\le \langle D\psi(o), x\rangle+o(|x|)$ as $x\rightarrow 0$. If $x=-\vep D\psi(o),\;\vep>0,$ we get
$d\vep^{\al-1}\le -|D\psi(o)|^{2-\al}+o(1),$ as $\vep\rightarrow 0$. Thus, $D\psi(o)=0$ and
$d|x|^\al\le \langle D^2\psi(o)x, x\rangle/2+o(|x|^2)$, as $x\rightarrow o$. Since $1<\al<2$, this is a contradiction. The inequality $H(Du^+, D^2u^+)\ge (\al d)^km_1>0$ holds, see (\ref{sec2.7}). Next, let $\phi\in C^2$ be such that $(u^- -\phi)(x)\ge (u^- -\phi)(o)$. Then, $-d|x|^\al\ge \langle D\phi(o),x\rangle+o(|x|)$ and $D\phi(o)=0$. Thus, $-d|x|^\al\ge \langle D^2\phi(o)x, x\rangle/2+o(|x|^2)$, as $|x|\rightarrow 0$.
This is a contradiction and $H(Du^-, D^2u^-)\le -(\al d)^k m_1$, see (\ref{sec2.43}), (\ref{sec2.461}) and (\ref{sec2.7}). $\Box$
\end{rem}

We consider the problem
\eqRef{sec5.1}
H(Du, D^2u)+f(x, u)=0,\;\;\in \Om\;\;\mbox{and $u=h$ on $\p\Om$,}
\ee
where $h\in C(\p\Om)$ and $f\in C(\Om\times \IR, \IR)$. For a function $g$, define $g^+=\max\{g,0\}$ and $g^-=\min\{g,0\}$. We now present a priori supremum bounds when $f(x,u)=f(x)$, also see \cite{BMO2}.

\begin{lem}\label{sec5.7}
Let $f\in C(\Om)\cap L^\infty(\Om)$, $h\in C(\p\Om)$, $\al$ and $\s$ be as in (\ref{sec2.45}).
Suppose that $B_{R_o}(z_o)$, for some $z_0\in \IR^n,$ is the out-ball of $\Om$.
Consider the problem
\eqRef{sec5.8}
H(Du,D^2 u)+f(x)=0,\;\;\forall x\in\Om,\;\;\;u= h,\;\;\mbox{on}\;\;\p\Om,
\ee

(i) If $u\in usc(\overline{\Om})$ is a sub-solution of (\ref{sec5.7}) then 
$\sup_\Om u\le   \sup_{\partial \Om}h+\s (\sup_{\Om} f^+)^{1/k}R_o^{\al}.$

(ii) Similarly, if $u\in lsc(\overline{\Om})$ is a super-solution of (\ref{sec5.7}) then $\inf_\Om u\ge \inf_{\partial \Om}h-\s |\inf_{\Om} f^-|^{1/k}R_o^{\al}.$
\end{lem}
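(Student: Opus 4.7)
The plan is to construct explicit radial barriers of the form $c \pm d\,|x - z_o|^{\al}$ --- precisely the functions for which the coercivity estimate (\ref{sec2.7}) was derived --- and compare them against $u$ via Lemma \ref{sec3.10}. The out-ball supplies a center $z_o$ and radius $R_o$ with $\Om \subset B_{R_o}(z_o)$, so $|x - z_o| \le R_o$ on $\overline{\Om}$.

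For part (i), fix $\vep > 0$ and set $d_\vep = \s(\sup_\Om f^+ + \vep)^{1/k}$. Consider
$$
w_\vep(x) \;=\; \sup_{\p\Om} h + d_\vep\bigl(R_o^{\al} - |x - z_o|^{\al}\bigr), \qquad x \in \overline{\Om}.
$$
Since $R_o^{\al} - |x - z_o|^{\al} \ge 0$ on $\overline{\Om}$, we have $w_\vep \ge \sup_{\p\Om} h \ge h$ on $\p\Om$. Rewriting $w_\vep$ in the form $c - d_\vep|x - z_o|^{\al}$, inequality (\ref{sec2.7}) --- together with Remark \ref{sec5.0}, which validates the estimate in the viscosity sense at the singular center $z_o$ (needed if $z_o \in \Om$, since $1 < \al < 2$) --- yields
$$
H(Dw_\vep, D^2 w_\vep) \;\le\; -(d_\vep/\s)^{k} \;=\; -\bigl(\sup_\Om f^+ + \vep\bigr) \;\le\; -f(x) - \vep \quad \text{in } \Om,
$$
so $w_\vep$ is a strict viscosity super-solution. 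Apply Lemma \ref{sec3.10} with $g(x,t) := f(x)$ attached to $u$ and $\tilde h(x,t) := f(x) + \vep$ attached to $w_\vep$: these are constant in $t$ (trivially non-increasing) and $g < \tilde h$. Since $u \le h \le w_\vep$ on $\p\Om$, the lemma gives $u \le w_\vep$ on $\overline{\Om}$, whence
$$
\sup_\Om u \;\le\; \sup_{\p\Om} h + d_\vep R_o^{\al} \;=\; \sup_{\p\Om} h + \s\bigl(\sup_\Om f^+ + \vep\bigr)^{1/k} R_o^{\al}.
$$
Sending $\vep \to 0^+$ gives (i). Part (ii) is completely symmetric, using instead the sub-barrier $\tilde w_\vep(x) = \inf_{\p\Om} h - d'_\vep\bigl(R_o^{\al} - |x - z_o|^{\al}\bigr)$ with $d'_\vep = \s(|\inf_\Om f^-| + \vep)^{1/k}$, which is a strict viscosity sub-solution by the $v^+$-inequality of (\ref{sec2.7}); Lemma \ref{sec3.10} applied in the reverse direction then yields the lower bound.

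The only genuinely delicate step is the loss of classical $C^2$ regularity of $|x - z_o|^{\al}$ at $z_o$ when $z_o \in \Om$ (the Hessian blows up there for $1 < \al < 2$). This is exactly the issue settled by Remark \ref{sec5.0}: no admissible $C^2$ test function can touch $c \pm d|x - z_o|^{\al}$ from the wrong side at $z_o$, so the super-/sub-solution inequalities are vacuous there and the barriers remain valid on all of $\Om$. With this, the proof reduces to a single application of the comparison lemma, plus the $\vep \downarrow 0$ limit.
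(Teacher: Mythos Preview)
Your proof is correct and is essentially identical to the paper's own argument: the same barrier $w_\vep(x)=\sup_{\p\Om}h+\s(\sup_\Om f^{+}+\vep)^{1/k}(R_o^{\al}-|x-z_o|^{\al})$, the same appeal to (\ref{sec2.7}) and Remark \ref{sec5.0} for the super-solution inequality (including the viscosity-sense check at $z_o$), the same application of Lemma \ref{sec3.10}, and the same $\vep\to 0$ passage; part (ii) likewise matches. Your write-up is, if anything, slightly more explicit about how Lemma \ref{sec3.10} is being invoked (identifying $g(x,t)=f(x)$ and $\tilde h(x,t)=f(x)+\vep$ as constant-in-$t$).
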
 
\begin{proof}  We prove part (i). Let $u$ be a sub-solution of (\ref{sec5.8}). Fix $\vep>0$ and consider the function
$$
w_\vep (x)=\sup_{\partial\Om}h+\sigma(\sup_\Om f^+ +\vep)^{1/k}\left(R_o^{\al}-|x-z_0|^{\al}\right), \forall x\in \overline{\Om}.
$$
Applying (\ref{sec2.41}), (\ref{sec2.42}), (\ref{sec2.43}), (\ref{sec2.461}), (\ref{sec2.45}), (\ref{sec2.7})
and Remark \ref{sec5.0}, we have
$$H(Dw_\vep, D^2w_\vep)=\left( \frac{\sup_\Om f^+ +\vep}{m_1}\right)H\left(e, \frac{k_1}{k} e\otimes e-I \right)\le -\sup_{\Om} f^+  -\vep<-f,\;\;\mbox{in $\Om$}.$$
Also, $w_\vep\ge h$, on $\p\Om$. Thus, Lemma \ref{sec3.10} implies $u(x)\le w_\vep(x)$, in $\Om$. Since $\vep$ is arbitrary the claim follows. 

Part (ii) follows by taking
$\hat{w}_\vep(x)=\inf_{\Om}h-\s (|\inf_\Om f^-| +\vep)^{1/k} (R_o^\al -|x-z_0|^\al ),\;\;\forall x\in \overline{\Om}.$
\end{proof}
\vsp
Let $f:\Om\times \IR\rightarrow \IR$ be continuous and satisfy
\eqRef{sec5.9}
\sup_{\Om\times[t_1, t_2]} |f(x,t|<\infty,\;\;\;\mbox{$\forall t_1,t_2$ such that $-\infty<t_1\le t_2<\infty$.}
\ee 
We apply Lemma \ref{sec5.7} to prove Theorem \ref{sec5.13}. A related result is proven in \cite{BMO2}.

\NI{\bf Proof of Theorem \ref{sec5.13}.} Set $M=\sup_{\Om}| u|$, $L=\sup_{\p\Om} |h|$ and $R_o$ the radius of the out-ball of $\Om$. Let $\vep>0$, small, be fixed. 

We prove part (a). By (\ref{sec5.12}) there exists $t_1>0$ such that  
\eqRef{sec5.10}
(\mu_1-\vep)|t|^k\le\inf_\Om f(x,t)\le \sup_\Om f(x,t)\le (\mu_2+\vep)|t|^k,\;\;\;\forall |t|>t_1.
\ee
By (\ref{sec5.9}), there is a $0<\mu_3<\infty$ such that $\sup_{[-t_1,t_1]\times \Om} |f(x,t)|\le \mu_3$. Define $s_4=\max( |\mu_1|+\vep,\;\mu_2+\vep)$, then 
$\sup_{\Om}|f(x,t)|\le \mu_4|t|^k+\mu_3,\;\;-\infty<t<\infty.$

Thus, we have $|f(x,u)|\le \mu_4 M^k+\mu_3$ and
\eqRef{sec5.15}
-|\lam|(\mu_4 M^k+\mu_3)\le H(Du,D^2u)\le |\lam|(\mu_4 M^k+\mu_3),\;\;\mbox{in $\Om$.}
\ee
Using $-L\le u\le L$ on $\p\Om$ and
applying the estimates of Lemma \ref{sec5.7} to (\ref{sec5.15}) we have
$$0\le |u|\le M\le L+\s|\lam|^{1/k}(\mu_4M^{k}+\mu_3)^{1/k}R_o^{\al}\le L+|\lam|^{1/k} \mu_5 M+ \mu_6 .$$
where $\mu_5>0$ and $\mu_6>0$ are independent of $M$.
Hence, $M\le (L+\mu_6)(1-|\lam|^{1/k}\mu_5)^{-1}.$
It is clear that if $|\lam|$ is small enough $u$ is a priori bounded. 

To show (b), take $\vep=\left\{ |\lam|(2\s R_o^\al)^{k}\right\}^{-1}$, and we get from (\ref{sec5.10}), $|f(x,t)|\le \vep |t|^k,\;|t|\ge t_1>0,$ where $t_1>L.$ Suppose that $M>t_1$, then
$H(Du, D^2u)\le \vep\lam M^k$ in the set $\{u>t_1\}$. Using Lemma \ref{sec5.7},
$M\le t_1+\s (\vep \lam)^{1/k} M$. Applying the definition of $\vep$, $\sup_\Om u\le 2t_1$. A similar argument can be used to obtain a lower bound for $\inf_\Om u.$
$\Box$

\centerline{Part II}

\section{ Estimates for the eigenvalue problem. Proof of Theorem \ref{sup1}.} 
 
In Part II, $H$ satisfies conditions $A,\;B$ and $C$, see (\ref{sec2.4})-(\ref{sec2.45}).
From hereon, $\lam\in \IR$ stands for a parameter, $a\in C(\Om, \IR)$ and $\dl\ge 0$. Assume that there are $0<\mu\le \nu<\infty$ such that
\eqRef{sec7.4}
0<\mu\le a(x)\le \nu<\infty,\;\;\forall x\in \Om.
\ee
We study the problem
\eqRef{sec7.5}
H(Du, D^2u)+\lam a(x) |u|^{k-1}u=0,\;\;\mbox{in $\Om$ and $u=h$ on $\p\Om$,}
\ee
where $u\in C(\overline{\Om})$, $h\in C(\p\Om)$ and $\inf_{\p\Om}h>0$. 

We record an observation for (\ref{sec7.5}) and include a comment relevant to the eigenvalue problem for $H$ when $H$ is odd in $X$.

\begin{rem}\label{sec7.14}
(i) Let $\lam>0$, $a(x)$ be as in (\ref{sec7.4}) and $h\in C(\p\Om),\;\inf_\Om h> 0$. If $u\in C(\overline{\Om}),\; u>0,$ solves 
$$H(Du, D^2u)+\lam a(x) |u|^{k-1}u= 0,\;\;\mbox{in $\Om$ and $u=h$ on $\p\Om$,}$$
then $u>\inf_{\p\Om}h$ in $\Om$ and is unique. These follow from Lemmas \ref{sec3.7} and \ref{sec4.71}.
We will show in Lemma \ref{sec7.7} that for small $\lam>0$ any solution $u$ is necessarily positive.

(ii) Suppose that $H(p, -X)=-H(p, X),\;\forall(p,X)\in \IR^n\times S^{n}$. We show that if (\ref{sec7.5}) has a positive super-solution for some $\lam>0$ then any solution of (\ref{sec7.5}) is necessarily positive. Hence, if, for some $\lam>0$, a solution changes sign, then (\ref{sec7.5}) has 
no positive super-solutions.

Let $v\in C(\overline{\Om}),\;v>0,$ 
solve $H(Dv, D^2v)+\lam a(x) v^{k}\le 0,\;\;\mbox{in $\Om$, and $v\ge h$ on $\p\Om$.}$ Let $u$ be any solution of (\ref{sec7.5}) that changes sign in $\Om$. Set $\Om^-=\{u<0\}$. Take $w=-u$, then $w>0$, in $\Om^-$, $H(Dw, D^2w)+\lam a(x) w^k= 0$, in $\Om^-,$ and 
$w=0$, on $\p\Om^-.$ Use Lemma \ref{sec4.71} in every component of $\Om^-$ to conclude that $\sup_{\Om^-}w/v\le \sup_{\p\Om^-}(w/v)=0.$ Thus, $\Om^-=\emptyset$ and $u\ge 0$. Lemma \ref{sec3.7} yields that $v>\inf_{\p\Om} h$. Uniqueness follows from Lemma \ref{sec4.71}. $\Box$
\end{rem}

Next, recalling (\ref{sec7.4}) and applying the estimates of Lemma \ref{sec5.7} to a solution $u$ of (\ref{sec7.5}), we get
$$\inf_{\p\Om}h -\s |\nu \lam|^{1/k} R_o^{\al}|\inf_{\Om} u^{-}|\le u(x)\le \sup_{\p\Om}h+\s|\nu\lam|^{1/k}R_o^{\al} \sup_{\Om} u^+.$$
where $\al$ and $\s$ are as in (\ref{sec2.45}). Setting $\Lambda=\nu^{-1} (\s R_o^{\al})^{-k},$ we obtain
\eqRef{sec7.6}
\inf_{\p\Om}h -\left(\frac{|\lam|}{\Lambda}\right)^{1/k} |\inf_{\Om}u^{-}|\le u(x)\le \sup_{\p\Om}h+\left(\frac{|\lam|}{\Lambda}\right)^{1/k}\sup_{\Om} u^+,
\ee

Our next result discusses the influence of $\lam$ on the solutions of (\ref{sec7.5}). 
\begin{lem}\label{sec7.7}
Let $a\in C(\Om)$ be as in (\ref{sec7.4}) and $\Lambda$ be as in (\ref{sec7.6}). Suppose that $u\in C(\overline{\Om})$ solves
$$H(Du, D^2u)+\lam a(x) |u|^{k-1} u=0,\;\;\mbox{in $\Om$, and $u=h$ on $\p\Om$,}$$
where $h\in C(\p\Om)$. Set $\kappa_1=\inf_{\p\Om} h$ and $\kappa_2=\sup_{\p\Om}h$. Then the following hold.

(i) If $\lam\le 0$ then $\min(0,\kappa_1)\le u\le \max(0,\kappa_2)$ in $\Om$. If $\lam=0$ then 
$\kappa_1\le u\le \kappa_2$ in $\Om.$

(ii) If $h=0$ and $u$ is a non-zero solution then $\lam>0$. (iii) If $0<\lam<\Lambda$ then
 \ben
 \frac{\kappa_1}{1-(\lam/\Lambda)^{1/k} }\le \inf_\Om u^-\le u(x)\le\sup_\Om u^+\le \frac{\kappa_2}{1-(\lam/\Lambda)^{1/k} }.
 \een
In particular, if $h\ge 0$ then $\kappa_1\le u\le \tht \kappa_2$, where $\tht=(1-(\lam/\Lambda)^{1/k} )^{-1}.$ Thus, if $\lam>0$ is small and $h>0$ then any solution $u$ is positive in $\Om$.
 \end{lem}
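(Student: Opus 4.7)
The plan is to treat the three parts in sequence using tools already established: a comparison with constants (Lemma \ref{sec3.10}) for part (i), an immediate deduction for (ii), and the a priori estimate (\ref{sec7.6}) combined with the maximum principle (Lemma \ref{sec4.13}) for (iii).

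For part (i), I take the constants $v_{\max}=\max(\kappa_2,0)$ and $v_{\min}=\min(\kappa_1,0)$. Since $H(0,O)=0$, $a>0$ and $\lam\le 0$, one has $\lam a(x)|v_{\max}|^{k-1}v_{\max}\le 0$ and $\lam a(x)|v_{\min}|^{k-1}v_{\min}\ge 0$, so $v_{\max}$ is a super-solution and $v_{\min}$ is a sub-solution of the equation, with the required boundary inequalities. When $\lam<0$ the map $t\mapsto \lam a(x)|t|^{k-1}t$ is strictly decreasing in $t$, hence Lemma \ref{sec3.10}(ii) yields $v_{\min}\le u\le v_{\max}$ in $\Om$. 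When $\lam=0$ the equation reduces to $H(Du,D^2u)=0$, and Lemma \ref{sec4.13}, applied to $u$ as both a sub- and a super-solution, gives the sharper $\kappa_1\le u\le \kappa_2$. Part (ii) is then immediate: if $h\equiv 0$ then $\kappa_1=\kappa_2=0$, so (i) forces $u\equiv 0$ for every $\lam\le 0$, and a non-zero solution must have $\lam>0$.

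For part (iii) with $0<\lam<\Lambda$, I apply (\ref{sec7.6}). Viewing $u$ as a solution of $H(Du,D^2u)+f(x)=0$ with source $f(x)=\lam a(x)|u(x)|^{k-1}u(x)$, the bounds $\sup f^+\le \lam\nu(\sup u^+)^k$ and $|\inf f^-|\le \lam\nu|\inf u^-|^k$ are precisely what is packaged into (\ref{sec7.6}) via the definition $\Lambda=\nu^{-1}(\s R_o^\al)^{-k}$. Taking the supremum of the upper estimate and splitting on the sign of $\sup u$: if $\sup u>0$ then $\sup u^+=\sup u$ and one rearranges, using $\lam<\Lambda$, to $\sup u^+\le \kappa_2/(1-(\lam/\Lambda)^{1/k})$; if $\sup u\le 0$ then $\sup u^+=0$ and the bound is immediate. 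A symmetric argument applied to the infimum gives the bound on $\inf u^-$.

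For the final ``in particular'' statement, assume $h\ge 0$, so $\kappa_1\ge 0$. The case analysis in (iii) then forces $\inf u^-=0$, i.e.\ $u\ge 0$ in $\Om$, so the equation reads $H(Du,D^2u)=-\lam a(x)u^k\le 0$; Lemma \ref{sec4.13}(ii) therefore gives $\inf_\Om u=\inf_{\p\Om}u=\kappa_1$, whence $u\ge \kappa_1$. Combining with the upper bound from (iii) yields $u\le \tht\kappa_2$, and the assertion that $u>0$ in $\Om$ for small $\lam>0$ when $h>0$ is the subcase $\kappa_1>0$. The one piece requiring mild care is the sign case analysis in (iii), since the bounds in terms of $u^{\pm}$ are informative only when the corresponding $\kappa_i$ has the favourable sign; the rest of the argument is a direct packaging of earlier tools.
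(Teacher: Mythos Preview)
Your proposal is correct and, for parts (ii), (iii), and the ``in particular'' clause, matches the paper's argument exactly: both use (\ref{sec7.6}) and then Lemma \ref{sec4.13} to extract the stated bounds.

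The only difference is in part (i). The paper argues by contradiction on the level sets $\Om^{\pm}=\{u\lessgtr \min(0,\kappa_1),\max(0,\kappa_2)\}$: on $\Om^-$ one has $u<0$ and $\lam\le 0$, hence $H(Du,D^2u)\le 0$, and Lemma \ref{sec4.13}(ii) applied to $\Om^-$ forces $\inf_{\Om^-}u=\inf_{\p\Om^-}u\ge \min(0,\kappa_1)$, a contradiction; similarly for $\Om^+$. You instead compare $u$ directly with the constants $v_{\min},v_{\max}$ via Lemma \ref{sec3.10}(ii), using that $t\mapsto \lam a(x)|t|^{k-1}t$ is strictly decreasing when $\lam<0$, and fall back on Lemma \ref{sec4.13} for $\lam=0$. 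Both routes are equally short; the paper's level-set argument treats $\lam\le 0$ uniformly without needing strict monotonicity, while yours avoids introducing auxiliary subdomains but must split off the case $\lam=0$.
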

 \begin{proof} We use Lemma \ref{sec4.13}. We prove part (i). Let $\lam\le 0$ and $\Om^-=\{x\in \Om:\;u(x)<\min (0,\kappa_1) \}$ be non-empty. Then $H(Du, D^2u)\le 0$, in $\Om^-$, and this contradicts 
 Lemma \ref{sec4.13}(ii).
Next, if $\Om^+=\{x\in \Om:\;u(x)>\max(0, \kappa_2)\}$ is non-empty then $H(Du, D^2u)\ge 0$ in $\Om^+$. This contradicts Lemma \ref{sec4.13} (i). Part (ii) now follows as a contrapositive of (i). To show (iii), we use (\ref{sec7.6}) and conclude that
$$\frac{\inf_{\p\Om} h}{1-(\lam/\Lambda)^{1/k} }\le \inf_\Om u^-\le u(x)\le\sup_\Om u^+\le \frac{\sup_{\p\Om} h}{1-(\lam/\Lambda)^{1/k} }.$$
If $\inf_{\p\Om} h\ge 0$ then $\inf_\Om u^-=0$ and we obtain the final estimate in the lemma.
 \end{proof}
 
We now show that if (\ref{sec7.5}) has a positive super-solution then it has a super-solution for a slightly larger value of $\lam$ and a sub-solution for a smaller value of $\lam$.
 
\begin{thm}\label{sec7.15}
Let $a\in C(\Om)$ be as in (\ref{sec7.4}), $h\in C(\p\Om),\;\inf_{\p\Om}h>0,$ and $\lam>0$.
Suppose that $u\in L^{\infty}(\Om)$ and $u>0$. Set $\vt=\inf_{\p\Om} h$; define $v_\tht=(u-\tht\vt)/(1-\tht),\;\forall 0\le \tht<1,$ and $w_\tht=(u+\tht\vt)/(1+\tht),\;\forall\tht>0$.

(i) If $u\in lsc(\overline{\Om})$ solves $H(Du, D^2u)+\lam a(x) u^k\le 0,\;\mbox{in $\Om,$}$ and 
$u\ge h$ on $\p\Om$, then, for every 
$$0<\vep\le \tht\lam k \left( \frac{(\vt/\sup_\Om u)}{1-\tht(\vt/\sup_{\Om} u)} \right),\;\;0\le \tht< 1,$$
the function $v_\tht$ solves
$ H(Dv_\tht, D^2v_\tht)+(\lam+\vep) a(x) v_\tht^k\le 0,\;\mbox{in $\Om,$ and $v_\tht\ge h$ on $\p\Om$.}$

(ii) Suppose that $u\in usc(\overline{\Om})$ solves $H(Du, D^2u)+\lam a(x) u^k\ge 0,\;\mbox{in $\Om,$}$ and 
$u\le h$ on $\p\Om$. For every $0<\vep<\lam$ there is a $\tht>0$ such that $w_\tht$ solves
$$ H(Dw_\tht, D^2w_\tht)+(\lam-\vep) a(x) w_\tht^k\ge 0,\;\;\mbox{in $\Om,$ and $w_\tht\le h$ on $\p\Om$.}$$
\end{thm}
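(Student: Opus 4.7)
The idea is to exploit the homogeneity (Condition B) together with the observation that $v_\theta$ and $w_\theta$ are affine functions of $u$. If a test function $\psi$ touches $(u-c)/\rho$ from above at $y$ (for $\rho>0$ and constant $c$), then $\rho\psi+c$ touches $u$ from above at $y$, and Condition B gives $H(D(\rho\psi+c),D^2(\rho\psi+c))=\rho^{k_1}\rho^{k_2}H(D\psi,D^2\psi)=\rho^k H(D\psi,D^2\psi)$. Hence, in the viscosity sense,
$$H(Dv_\theta,D^2v_\theta)=(1-\theta)^{-k}H(Du,D^2u),\qquad H(Dw_\theta,D^2w_\theta)=(1+\theta)^{-k}H(Du,D^2u).$$

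For part (i), the super-solution inequality gives $H(Du,D^2u)\le -\lambda a u^k\le 0$, so Lemma \ref{sec4.13}(ii) yields $u\ge \inf_{\partial\Omega}u\ge \vartheta$ throughout $\Omega$, in particular $v_\theta>0$; on $\partial\Omega$, $u\ge h\ge\vartheta$ forces $v_\theta\ge h$. Plugging the displayed identity into the target inequality for $v_\theta$ at parameter $\lambda+\varepsilon$ and dividing by $a(x)/(1-\theta)^k>0$ reduces everything to the pointwise scalar bound
$$(\lambda+\varepsilon)\Bigl(1-\frac{\theta\vartheta}{u(x)}\Bigr)^k\le \lambda,$$
whose worst case is $u(x)=M:=\sup_\Omega u$. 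The elementary inequality
$$1-(1-x)^k\ge kx(1-x)^{k-1},\qquad 0\le x\le 1,\ k\ge 1,$$
(obtained from $\int_0^x k(1-t)^{k-1}\,dt\ge kx(1-x)^{k-1}$, since $(1-t)^{k-1}$ is nonincreasing on $[0,x]$ when $k\ge 1$) rearranges, after dividing by $(1-x)^k$, to $\lambda[(1-x)^{-k}-1]\ge \lambda kx/(1-x)$. Taking $x=\theta\vartheta/M$ gives precisely the stated upper bound on $\varepsilon$.

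Part (ii) is symmetric. The same reduction turns the sub-solution inequality for $w_\theta$ at parameter $\lambda-\varepsilon$ into the pointwise condition $(\lambda-\varepsilon)(1+\theta\vartheta/u)^k\ge \lambda$; the worst case is again $u(x)=M<\infty$ (finite because $u\in usc(\overline{\Omega})$), giving $(1+\theta\vartheta/M)^k\ge \lambda/(\lambda-\varepsilon)$. For any $0<\varepsilon<\lambda$ this is solved explicitly by
$$\theta=(M/\vartheta)\bigl[(1-\varepsilon/\lambda)^{-1/k}-1\bigr]>0.$$
The boundary condition $w_\theta\le h$ is immediate from $u\le h$ and $\vartheta\le h$ on $\partial\Omega$.

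The main obstacle is really bookkeeping rather than analysis: one must verify that the affine change of variable genuinely preserves the viscosity inequality (immediate from Condition B plus the correspondence of test functions), correctly locate the worst-case $u=M$ in each scalar reduction, and invoke the maximum principle in part (i) to pin $u$ above $\vartheta$ so that $v_\theta$ stays positive. No compactness or limiting argument is needed.
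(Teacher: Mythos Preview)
Your proof is correct and follows essentially the same approach as the paper's. The paper works with $\eta=u-\theta\vartheta$ first and scales by $(1-\theta)^{-1}$ at the end, uses the equivalent scalar inequality $k(t-1)\le t^k-1$ (your version with $t=(1-x)^{-1}$), and likewise invokes Lemma \ref{sec4.13} to obtain $u\ge\vartheta$; your treatment of the viscosity test-function correspondence is just a bit more explicit.
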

\begin{proof} Set $m=\sup_\Om u$. 

 (i) Fix $0<\tht\le1$ and set $\eta=u-\tht\vt$. By Lemma \ref{sec4.13}, $u\ge \vt$, and  $\eta\ge (1-\tht)\vt$, in $\Om$, and $\eta\ge (1-\tht)h$ on $\p\Om$.
Observe that $(t-\tht\vt)/t,\;t\ge \vt,$ is increasing in $t$. Calculating,
\ben
H(D\eta, D^2\eta)+(\lam+\vep)a(x)\eta^k&\le& a(x) \left\{ (\lam+\vep)\eta^k-\lam u^k \right\}=a(x) u^k \left\{ (\lam+\vep) \left(\frac{u-\tht\vt}{u}\right)^k-\lam \right\}\nonumber\\
&\le &a(x)u^k\left\{ (\lam+\vep) \left(\frac{m-\tht\vt}{m}\right)^k-\lam \right\}\le 0,
\een
if we choose $0<\vep\le \lam \left\{ m^k(m-\tht\vt)^{-k}-1\right\}$. Using the lower bound $k(t-1)\le t^k-1,\;t\ge 1$, we take
$$0<\vep\le \tht \lam k \left( \frac{ (\vt/m)}{1-\tht(\vt/m)} \right).$$
Using the homogeneity of $H$, $v_\tht(x)= \eta/(1-\tht)=(u-\tht\vt)/(1-\tht),\;0\le \tht<1,$
$\forall x\in \Om,$ solves $H(Dv_\tht, D^2v_\tht)+(\lam+\vep) a(x) v_\tht^k\le 0$, in $\Om$, and $v_\tht\ge h$, on $\p\Om$. 

(ii) Let $0<\vep<\lam$ be fixed and $\tht>0$ to be determined. Set $\varphi=u+\tht\vt$, in $\overline{\Om}$ and calculate to obtain
\ben
H(D\varphi, D^2\varphi)+(\lam-\vep) a(x) \varphi^k &\ge& a(x) u^k \left( (\lam-\vep) \frac{\varphi^k}{u^k}-\lam \right)=a(x) u^k \left( (\lam-\vep) \left( \frac{u+\tht\vt}{u} \right)^k-\lam \right)\\
&\ge&a(x) u^k\left( (\lam-\vep) \left( \frac{m+\tht\vt}{m}\right)^k-\lam \right) \ge 0,
\een
if $\tht$ is such that $0<\vep\le \lam \left\{ (m+\tht\vt)^k m^{-k}-1\right\}.$ Clearly, $w_\tht\ge \vt$, in $\Om$, and $w_\tht\le h$ on $\p\Om$.
\end{proof}

We introduce a quantity that will be useful for the eigenvalue problem. 
Let $\dl>0$ and $\lam>0$. Consider the problem of finding a positive solution $u_\lam\in C(\overline{\Om})$ of
\eqRef{sec7.16}
H(Du_\lam, D^2u_\lam)+\lam a(x) |u_\lam|^{k-1}u_\lam=0,\;\;\mbox{in $\Om$, and $u_\lam=\dl$ on $\p\Om$.}
\ee
Define
\eqRef{sec7.1700}
\lamo=\sup\{\lam:\;(\ref{sec7.16}) \;\mbox{has a positive solution $u_\lam$.} \}.
\ee
We will show in Sections 8 and 9 that $0<\lamo<\infty.$ For the next result we assume this fact. 

\begin{thm}\label{sec7.17} 
Suppose that $\lamo>0$, where $\lamo$ is as in (\ref{sec7.1700}) and $0<\lam<\lamo$. Let $a\in C(\Om)$ be as in (\ref{sec7.4}) and $u_\lam>0$ be a solution of (\ref{sec7.16}). Then the following hold.

(i) The solution $u_\lam$ is unique and $u_\lam>\dl$. (ii) For every $x\in \Om$, the function $u_\lam(x)$ increases as $\lam$ increases.
(iii) Call $m_\lam=\sup _{\Om} u_\lam$. If $0<\lamo<\infty$ then 
$$m_\lam\ge \dl \left(1+\frac{k\lam}{\lamo-\lam}\right),\;\;0<\lam<\lamo.$$  
Thus, $m_\lam\rightarrow \infty$ as $\lam\rightarrow \lamo$.

(iv) The set of $\lam$'s for which (\ref{sec7.16}) has a positive solution is the interval $[0,\lamo)$.
\end{thm}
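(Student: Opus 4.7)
The plan is to treat the four parts in order, with Theorem \ref{sec7.15}(i) as the main engine for the quantitative estimate in (iii), and Perron's method (Remark \ref{sec2.8}) together with the comparison principle of Lemma \ref{sec4.71} for the monotonicity and existence arguments. Parts (i) and (ii) are essentially bookkeeping. For (i), both the strict inequality $u_\lam > \dl$ and uniqueness are already recorded in Remark \ref{sec7.14}(i): since $\lam a(x) u_\lam^k \ge 0$, the function $u_\lam$ is a super-solution of $H(Du, D^2u) \le 0$, so Lemma \ref{sec3.7} yields $u_\lam > \inf_{\p\Om}\dl = \dl$ in $\Om$, and Lemma \ref{sec4.71} (applied with $g = h = \lam a(x)|t|^{k-1}t$) gives uniqueness. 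For (ii), given $0 < \lam_1 < \lam_2 < \lamo$ with positive solutions $u_{\lam_1}, u_{\lam_2}$, the identity
\[
H(Du_{\lam_2}, D^2 u_{\lam_2}) + \lam_1 a(x) u_{\lam_2}^k = (\lam_1 - \lam_2) a(x) u_{\lam_2}^k \le 0
\]
shows that $u_{\lam_2}$ is a super-solution of the $\lam_1$-equation sharing the boundary value $\dl$ with $u_{\lam_1}$, so Lemma \ref{sec4.71} yields $u_{\lam_1} \le u_{\lam_2}$ in $\Om$.

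For (iii), I will fix $\lam \in (0, \lamo)$, set $m_\lam = \sup_\Om u_\lam$, and exploit Theorem \ref{sec7.15}(i): since $u_\lam$ is a positive super-solution of the $\lam$-problem with boundary data $\dl$, for each $\tht \in (0,1)$ and each $\vep$ with
\[
0 < \vep < \frac{\tht \lam k \dl}{m_\lam - \tht\dl},
\]
the function $v_\tht = (u_\lam - \tht\dl)/(1-\tht)$ is a positive super-solution of the $(\lam+\vep)$-equation with $v_\tht = \dl$ on $\p\Om$. The constant $\dl$ is a positive sub-solution of the same equation (because $H(0, O) = 0$ by Condition A), and $\dl \le v_\tht$. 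Perron's method (Remark \ref{sec2.8}), combined with the comparison from Lemma \ref{sec4.71}, then produces a positive solution of the $(\lam+\vep)$-equation with boundary value $\dl$, so $\lam + \vep \le \lamo$. Sending $\tht \to 1^-$ and taking the supremum over admissible $\vep$ gives $\lamo \ge \lam + \lam k \dl/(m_\lam - \dl)$, which rearranges to the claimed bound $m_\lam \ge \dl(1 + k\lam/(\lamo - \lam))$ and forces $m_\lam \to \infty$ as $\lam \to \lamo^-$.

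Part (iv) then follows. The set $S$ of admissible $\lam$ contains $0$ because $u \equiv \dl$ solves the $\lam = 0$ problem, and $\lamo = \sup S$ by definition. For any $\lam \in (0, \lamo)$, pick $\lam' \in S$ with $\lam' > \lam$ (available since $\lam$ is not an upper bound of $S$); the calculation from (ii) shows $u_{\lam'}$ is a super-solution of the $\lam$-problem and $\dl \le u_{\lam'}$ is a sub-solution, so Perron yields $\lam \in S$. Finally $\lamo \notin S$, for otherwise $m_{\lamo}$ would be finite; choosing any sequence $\lam_n \in S$ with $\lam_n \to \lamo$, the monotonicity of (ii) would force $m_{\lam_n} \le m_{\lamo} < \infty$, contradicting the blow-up from (iii). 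Hence $S = [0, \lamo)$.

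The only delicate step is (iii), where two facts must line up simultaneously: the perturbed super-solution $v_\tht$ retains exactly the boundary value $\dl$ (checked by direct substitution, using that $u_\lam = \dl$ on $\p\Om$), and the comparison principle must apply in the equality case $g = h$ (provided by Lemma \ref{sec4.71}). Both are already in hand, so the rest of the proof reduces to the monotonicity calculation of (ii) and the existence machinery of Remark \ref{sec2.8}.
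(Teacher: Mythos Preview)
Your proof is correct and matches the paper's approach: Remark \ref{sec7.14} and comparison (Lemma \ref{sec4.71}/Theorem \ref{sec4.3}) for (i)--(ii), Theorem \ref{sec7.15}(i) plus Perron for (iii), and the sub/super-solution sandwich for (iv). The only minor stylistic difference is that you exclude $\lamo$ from $S$ via the blow-up of $m_\lam$ from (iii), whereas the paper relies (implicitly) on the openness of $S$---a solution at $\lam$ yields one at $\lam+\vep$, so $\lamo\in S$ would contradict $\lamo=\sup S$; both routes are valid.
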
 
\begin{proof} Parts (i) and (ii) follow from Remark \ref{sec7.14} and Theorem \ref{sec4.3}. 
We use Theorem \ref{sec7.15} (i) to prove part (iii). To see this, let $u_\lam$ be the solution of 
(\ref{sec7.16}) for some $\lam<\lamo$. Let $0<\tht<1$ and $\vep$ be as in part (i) of Theorem \ref{sec7.15}. Then
$$0<\vep\le \tht \lam k \left( \frac{ (\dl/m_\lam)}{1-\tht(\dl/m_\lam)} \right)\le \lam k \left( \frac{ (\dl/m_\lam)}{1-(\dl/m_\lam)} \right),\;\;\;\forall \;0<\tht<1.$$
Clearly, $\lam+\vep\le \lamo$. Letting $\tht\uparrow 1$,
$$\lamo-\lam\ge\lam k \left( \frac{ (\dl/m_\lam)}{1-(\dl/m_\lam)} \right).$$
Rearranging, we obtain the estimate in part (iii).

For part (iv), let $\lam<\lamo$ and (\ref{sec7.16}) have a solution $u_\lam$. If $0<\hat{\lam}<\lam$ then $H(Du_\lam, D^2u)+\hat{\lam} a(x) u_\lam(x)^k\le 0,$ with $u_\lam=\dl$ on $\p\Om$. Thus, $u_\lam$ is super-solution and $v=\dl$ is a sub-solution. By Lemma \ref{sec4.71} and Remark \ref{sec2.8}, the problem $H(Dw, D^2w)+\hat{\lam} a(x) w^k=0,$ in $\Om$, with $w=\dl$, has a positive solution.

From Theorem \ref{sec7.15}(i), $v_\tht$ is a super-solution of (\ref{sec7.16}) with $\lam+\vep$ and
$v_\tht=\dl$ on $\p\Om$. Also, the function $v=\dl$ is a sub-solution in $\Om$. By Lemma \ref{sec4.71} and Remark \ref{sec2.8} there is a positive solution of $H(Dw,D^2w)+(\lam+\vep)a(x) w^k=0$, in $\Om$, and $w=\dl$ on $\p\Om$. 
\end{proof} 
Theorem \ref{sec7.15} will be instrumental for proving Theorem \ref{sup1}. We now show that $u_\lam$, a solution of (\ref{sec7.16}), is an increasing Lipschitz continuous function of $\lam$, for $0<\lam<\lamo.$
\vsp
{\bf Proof of Theorem \ref{sup1}.}

\begin{proof} By Theorem \ref{sec7.17} (iv), $\lam$ is an interior point. Fix $\lam$ and $x\in \Om$.
Thus, (\ref{sec10}) has a unique positive solution $u_\lam$ and
by Theorem \ref{sec4.3}, $v_x(\lam)=u_\lam(x)$ is non-decreasing in $\lam$. Set $M_\lam=\sup u_\lam.$

We make repeated use of Theorem \ref{sec7.15} (i). Recall that if $u_\lam$ solves (\ref{sec10}) then for $0<\tht<1$ and
$0<\vep\le \tht k \lam (\dl/M_\lam)$ there is a solution $u_{\lam+\vep}>0$ of
\eqRef{sec10.1}
H(Du_{\lam+\vep}, D^2u_{\lam+\vep})+(\lam+\vep) a(x) u_{\lam+\vep}^k=0,\;\;\mbox{in $\Om$, and $u_{\lam+\vep}=\dl.$}
\ee
Also, $w(x)=(u_{\lam}-\tht\dl)(1-\tht)^{-1}$ is a super-solution of (\ref{sec10.1}), and by Lemma \ref{sec4.71},
\eqRef{sec10.2}
u_{\lam+\vep}\le w,\;\;\mbox{in $\Om$.}
\ee

{\bf Upper Bound} By (\ref{sec10.1}) and (\ref{sec10.2}), for $0<\tht<1$ and $0<\vep\le \tht k\lam(\dl/M_\lam),$ we have
$$v_x(\lam+\vep)=u_{\lam+\vep}(x)\le \frac{u_{\lam}(x)-\tht\dl}{1-\tht}=\frac{v_x(\lam)-\tht\dl}{1-\tht}.$$
Taking $\vep=k\tht\lam_0(\dl/M_\lam)$, $\tht$ small, we get
\ben
0\le \frac{v_x(\lam+\vep)-v_x(\lam)}{\vep}\le \frac{\tht(v_x(\lam)-\dl)}{(1-\tht)\vep}=\left(\frac{M_\lam}{k\dl}\right) \frac{v_x(\lam)-\dl}{\lam(1-\tht)}
\een
Thus, the right hand derivative $D^+_\lam v_x(\lam)\le (M_\lam/k\dl) (v_x(\lam)-\dl)/\lam,$ by letting $\tht\rightarrow 0$.

We now compute the left hand derivative $D^-_\lam M_{\lam}$ as follows. Fix $0<\tht<1$, small, and choose $\hat{\lam}<\lam$ such that 
(see (\ref{sec10.1}) and (\ref{sec10.2}))
\eqRef{sec10.3}
\hat{\lam}=\lam\left(1-k\tht\left(\frac{\dl}{M_{\hat{\lam}}}\right)\right)\;\;\;\mbox{and}\;\;\;\vep=k\tht \left(\frac{\dl \hat{\lam}}{M_{\hat{\lam}}}\right)
\ee
Thus, $\lam=\hat{\lam}+\vep .$ Observe that $v_x(\hat{\lam})\le v_x(\lam)$ and $M_{\hat{\lam}}\le M_\lam$.

Using (\ref{sec10.2}), (\ref{sec10.3}), Theorem \ref{sec7.17} and Lemma \ref{sec4.71} 
yield $v_x(\hat{\lam}+\vep)=v_x(\lam)\le (v_x(\hat{\lam})-\tht\dl)/(1-\tht),$ and
\ben
0\le \frac{v_x(\hat{\lam}+\vep)-v_x(\hat{\lam})}{\vep}=\frac{v_x(\lam)-v_x(\hat{\lam})}{\vep}\le \frac{\tht(v_x(\hat{\lam})-\tht\dl)}{\vep(1-\tht)} 
\le \left(\frac{M_\lam}{k\dl}\right) \frac{v_x(\lam)-\dl}{\hat{\lam}(1-\tht)}.
\een
Letting $\tht\rightarrow 0$, we get $D^-_\lam v_x(\lam)\le (M_\lam/k\dl) (v_x(\lam)-\dl)/\lam.$ Clearly, $v_x(\lam)$ is Lipschitz continuous for fixed $x$ and $\dl>0$. The upper bound in the theorem holds.

{\bf Lower Bound.} 
Let $0<\lam_1<\lam_2<\lamo$. Using (\ref{sec10}), Remark \ref{sec4.12} and Lemma \ref{sec4.120},
$$v_x(\lam_1)/\dl\le ( v_x(\lam_2)/\dl )^{\tau},\;\;\mbox{where}\;\;\tau=( \lam_1/\lam_2 )^{1/k}<1.$$ 
We obtain $\log (v_x(\lam_1)/\dl)\le \tau \log( v_x(\lam_2)/\dl)$
Subtracting $\tau\log (v_x(\lam)/\dl)$ from both sides, rearranging and noting that $v_x(\lam)$ is Lipschitz continuous, we see that
$$\frac{\log (v_x(\lam_2)/\dl)-\log (v_x(\lam_1)/\dl) }{\lam_2-\lam_1}\ge \left( \frac{\lam_2^{1/k}-\lam_1^{1/k} }{\lam_1^{1/k} (\lam_2-\lam_1) }\right) \log (v_x(\lam_1)/\dl).$$
The conclusion follows by letting $\lam_2\rightarrow \lam_1$.
\end{proof}

\begin{rem}\label{sup2} The above theorem holds if $v_x(\lam)$ is replaced by $M_\lam=\sup v_x(\lam)$. $\Box$ \end{rem}

\vsp
\section{Existence: Proofs of Theorems \ref{sec7.9} and \ref{sec7.171}}
 
We now present the proof of Theorem \ref{sec7.9} and show the existence of a solution of (\ref{sec7.5}), for small $\lam>0$ which in turn will imply that $\lamo>0$. This is done by constructing suitable sub-solutions and super-solutions. The two cases in (\ref{sec2.450}) are addressed separately. 

{\bf Proof of Theorem \ref{sec7.9}.} Set 
$$\nu=\sup_{x\in \Om} a(x),\;\;m=\inf_{\p\Om}h,\;\;\bar{m}=\sup_{\p\Om}h,\;\;R=\mbox{diam}(\Om),\;\;
\mbox{and assume that}\;\;m>0\;\;\mbox{and}\;\;\nu<\infty.$$ 
Using (\ref{sec2.61}) and (\ref{sec2.62}), we construct suitable sub-solutions and super-solutions to achieve our goal. 

Fix $y\in \p\Om$ and $\vep>0$, small, be such that $m-2\vep>0$. Let $r=|x-y|$. By continuity, there is a 
$\eta>0$ such that 
\eqRef{sec7.100}
h(y)-\vep\le h(x)\le h(y)+\vep,\;\;\forall x\in \overline{B}_\eta(y)\cap \p\Om.
\ee

{\bf Case (a):} Suppose that (\ref{sec2.450})(i) holds. Fix $\beta=2-\bar{s}$. Let $v^\pm=c\pm dr^\beta$ where $c>0$ and $d>0$. By (\ref{sec2.461}) and (\ref{sec2.61}), 
\bea\label{sec7.101}
H\left( Dv^+, D^2v^+ \right)\le  \frac{(d\beta)^k}{ r^{\g-k\beta}} m_2(\bar{s})<0,\;\;\mbox{and}\;\;\;H\left( Dv^-, D^2v^- \right)\ge  \frac{(d\beta)^k}{ r^{\g-k\beta}} |m_2(\bar{s})|>0.
\eea
Note that $0<\beta<1$, $k=k_1+k_2$, $\g=k_1+2k_2$ and $\g-k\beta>0$, see (\ref{sec2.45}). 

We construct a sub-solution $v$. We assume that $h(y)>m$, otherwise we take $v(x)=m$ in $\overline{\Om}.$ Note that $h(y)-\vep-(m/2)>0$. Set $r=|x-y|$ and
\bea\label{7.1011}
v^-(x)=\left\{ \begin{array}{ccc} h(y)-\vep- \left(h(y)-\vep-m/2\right)\left( r/\eta\right)^{\beta}, & \mbox{in $\overline{B}_\eta(y)\cap \overline{\Om}$},\\
m/2, &\mbox{in $\overline{\Om}\setminus \overline{B}_\eta(y).$} \end{array}\right.
\eea 
Then $v^-(y)=h(y)-\vep$, $v^-=m/2$, on $\p B_\eta(y)$, and
$m/2\le v^-(x)\le h(y)-\vep$, in $\Om$.

Applying (\ref{sec7.101}), we obtain that $H(Dv^-, D^2v^-)\ge 0$ and 
$v^-$ is a sub-solution of (\ref{sec7.10}) in $B_\eta(y)\cap \Om$. Also, $v^-$ is a sub-solution in $\Om\setminus \overline{B}_\eta(y)$ and by (\ref{sec7.100}) $v^-\le h$ on $\p\Om$. 
To show that
$v^-$ is a sub-solution in $\Om$, let $p\in \p B_\eta(y)\cap \Om$ and $\psi\in C^2$ be such that $(v^- -\psi)(x)\le (v^- -\psi)(p)$. Since $v^-(p)=m/2$ and $v^-(x)\ge v^-(p)$, we get
$0\le \langle D\psi(p), x-p\rangle+o(|x-p|)$ as $x\rightarrow p$. It follows that $D\psi(p)=0$ and a second order expansion shows that $D^2\psi(p)\ge 0$. Clearly, $H(D\psi(p), D^2\psi(p))+\lam a(p)(v^-(p))^k\ge 0$.

Next, we construct a super-solution $v^+$. We assume that $h(y)<\bar{m}$, otherwise take $v^+(x)=\bar{m}$ in $\Om$. For a fixed $0<\tht<1$, let $\lam=\tht |m_2(\bar{s})|(2-\bar{s})^k(R^{\g}\nu)^{-1}$. Set $r=|x-y|$ and
$$v^+(x)=h(y)+\vep+dr^\beta,\;\;\mbox{in $\overline{\Om}$,}$$
where $\beta=2-\bar{s}$ and
$$d\ge \max\left( \frac{ 2\bar{m}-h(y)-\vep }{\eta^\beta},\; \frac{ 2\bar{m}\tht^{1/k} }{(1-\tht^{1/k})R^\beta} \right).$$
It is easy to see that $v^+(y)=h(y)+\vep$, $v^+\ge 2\bar{m}$, on $\overline{\Om}\setminus B_\eta(y)$, and by (\ref{sec7.100}) $v^+\ge h$ on $\p\Om$. 
Set $c=h(y)+\vep$, and observing that $\g-k\beta>0$, we calculate, using the value of $\lam$,
\ben
&&H(Dv^+, D^2v^+)+\lam a(x) (v^+)^k\le \lam \nu (c+dR^\beta)^k-\frac{\beta^kd^k}{ R^{\g-k\beta}} |m_2(\bar{s})|\\
&&=(c+dR^\beta)^k \left( \lam \nu -\frac{\beta^k|m_2(\bar{s})|}{R^\g} \left( \frac{d R^\beta}{c+dR^\beta} \right)^k \right)
\le(c+dR^\beta)^k \left( \lam \nu -\frac{\beta^k|m_2(\bar{s})|}{R^\g} \left( \frac{d R^\beta}{2\bar{m}+dR^\beta} \right)^k \right)\\
&&\le(c+dR^\beta)^k \left( \lam \nu -\frac{|m_2(\bar{s})|\beta^k\tht}{R^\g}  \right)=0,
\een
where we have used $t/(1+t),\;t>0,$ is increasing in $t$ and $dR^\beta\ge  2\bar{m}\tht^{1/k} (1-\tht^{1/k})^{-1}.$ Thus, $v^+$ is a super-solution. Lemma \ref{sec4.71} and Remark \ref{sec2.8} imply existence of a solution $u$ of (\ref{sec7.10}).

{\bf Case (b):} Let (\ref{sec2.450}) (ii) hold i.e, $\Om$ satisfies a uniform outer ball condition. Call $2\rho$ as the optimal radius. Let $z\in \IR^n$ be such that 
$B_{2\rho}(z)\subset \IR^n\setminus \Om$ and $y\in \p B_{2\rho}(z)\cap \p\Om$. Choose $\eta<\rho$. 

Using (\ref{sec2.62}) fix $\beta>\bar{s}-2\ge 0$ and $s=\beta+2$. Taking $v^\mp=c\pm dr^{-\beta}$, 
\eqRef{sec7.102}
H\left( Dv^+, D^2v^+ \right)\ge  \frac{(d\beta)^k|m_2(s)|}{r^{k\beta+\g}} >0, \;\;\mbox{and}\;\;H\left( Dv^-, D^2 v^- \right)\le  - \frac{(d\beta)^k|m_2(s)|}{ r^{k\beta+\g}}<0.
\ee

We construct a sub-solution as follows. Let $p$, on the segment $\overline{yz}$, be such that $|y-p|=\eta/4$. Clearly, $\Om\cap \left(B_{\eta/2}(p)\setminus \overline{B}_{\eta/4}(p)\right)$ is 
a non-empty open set. Set $r=|x-p|$.

Assume that $h(y)>m$ and $m-2\vep>0$. We take $v^-(x)=c+dr^{-\beta}$ in $B_{\eta/2}(p)$, where 
$$c=h(y)-\vep-\frac{4^\beta d}{\eta^\beta},\;\;\mbox{and}\;\;d=\eta^\beta \left( \frac{h(y)-\vep-(m/2)}{4^\beta-2^\beta}\right)>0.$$
Thus, $v^-(y)=v^-(\eta/4)=h(y)-\vep$, $v^-(x)=m/2$, on $\p B_{\eta/2}(p),$ and 
$m/2\le v^-\le h(y)-\vep$. Extend $v^-=m/2$ in $\overline{\Om}\setminus B_{\eta/2}(p)$. 

By (\ref{sec7.102}),
$H(Dv^-, D^2v^-)+\lam a(x) (v^-)^k\ge 0,$ in $B_{\eta/2}(p)\cap \Om$. Since $B_{\eta/2}(p)\subset B_\eta(y)$, (\ref{sec7.100}) implies $v^-\le h$ on $\p\Om$. The proof that $v^-$ is a sub-solution in $\Om$ is similar to that in Case (a).  

We construct super solutions as follows. Take $0<\tht<1$ and 
\eqRef{sec7.103}
\lam=\tht \frac{|m_2(s)|\beta^k}{\nu R^{\g}} \left(\frac{\rho}{R}\right)^{k\beta}.
\ee
Recalling the outer ball condition, select $q$ on the segment $\overline{yz}$ such that $|q-y|=\rho$ and set $r=|x-q|$. There is a $\bar{\rho}>\rho$ such that $B_{\bar{\rho}}(q)\cap \Om\subset B_\eta(y)\cap \Om$ ($2\rho$ is the optimal radius).  Set    
$v^+(x)=c-dr^{-\beta}$, where
$$c=h(y)+\vep+\frac{d}{\rho^\beta},\;\;\mbox{and}\;\;\;d\ge \max \left\{ \frac{ (\rho\bar{\rho})^\beta (2\bar{m}-h(y)-\vep) }{\bar{\rho}^\beta-\rho^\beta},\; \frac{ 2\bar{m} \tht^{1/k} \rho^\beta}{1-\tht^{1/k}}\right\}.$$
Clearly, $v^+(y)=v^+(\rho)=h(y)+\vep$, $v^+\ge 2\bar{m}$, in $\overline{\Om}\setminus B_{\bar{\rho}}(p),$ and (\ref{sec7.100}) implies $v^+\ge h$ on $\p\Om$. Next, using (\ref{sec7.102}), (\ref{sec7.103}) and $d\rho^{-\beta}\ge2\bar{m} \tht^{1/k}(1-\tht^{1/k})^{-1}$, we calculate, in $\Om\setminus \overline{B}_\rho(q)$,
\ben
H(Dv^+, D^2v^+)+\lam a(x) (v^+)^k&\le& \lam \nu (c-dr^{-\beta})^k- \frac{(d\beta)^k|m_2(s)|}{R^{k\beta+\g}}\\
&\le &  \tht (2\bar{m}+d\rho^{-\beta})^k \frac{|m_2(s)|\beta^k}{R^{\g}} \left(\frac{\rho}{R}\right)^{k\beta}-\frac{\beta^kd^\beta |m_2(s)|}{R^{k\beta+\g}} \\
&\le &(2\bar{m}+d\rho^{-\beta})^k\frac{|m_2(s)|\beta^k}{R^{\g}} \left(\frac{\rho}{R}\right)^{k\beta} \left\{ \tht- \left(\frac{d\rho^{-\beta}}{2\bar{m}+d\rho^{-\beta}}\right)^k \right\}\le 0.
\een
Thus, $v^+$ is a super-solution and
Lemma \ref{sec4.71} and Remark \ref{sec2.8} imply existence. $\Box$

\begin{rem}\label{sec7.160} The proof of Theorem \ref{sec7.9} shows that, unlike the super-solutions, the constructions of the sub-solutions in Cases (a) and (b) are independent of $\lam$ for $\lam\ge 0.$ Also, the upper bounds for $\lam$ in the two cases do not depend on the boundary data $h$.  $\Box$
\end{rem}  

We show a domain monotonicity property of $\lamo$ (see (\ref{sec7.1700})) i.e,
if $\Om^{'}\subset \Om$ then $\lamo\le \lam_{\Om^{'}}.$ We use this in proving Theorem \ref{sec7.171}. This is shown for any subdomain if (\ref{sec2.450}) (i) holds and for any subdomain that satisfies a uniform outer ball condition if (\ref{sec2.450}) (ii) holds, see (\ref{sec1.10}). 

\begin{lem}\label{sec8.7}
Let $\Om^{'}\subset \Om$ be a sub-domain. Suppose that $\lam>0$, $\dl>0$ and $a\in C(\Om)\cap L^{\infty}(\Om),\;a>0.$ Assume that for some $0<\lam<\infty$ the problem
\eqRef{sec8.8}
H(Du, D^2u)+\lam a(x) |u|^{k-1}u=0,\;\mbox{in $\Om,$ and $u=\dl$ on $\p\Om$},
\ee
has a positive solution $u\in C(\overline{\Om})$. Then the problem 
$H(Dv, D^2v)+\lam a(x) |v|^{k-1}v=0,$ in $\Om^{'}$, and $v=\dl$ on $\p\Om^{'}$,
has a positive solution $v\in C(\overline{\Om^{'}}).$ 
\end{lem}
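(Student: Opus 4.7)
The plan is to apply Perron's method (Remark \ref{sec2.8}) on $\Om'$ with constant boundary data $\dl$, using the given solution $u$ on the larger domain $\Om$ to supply a global super-solution.

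First I record the main ingredients. By Lemma \ref{sec3.7} (equivalently, Remark \ref{sec7.14}(i)) applied to $u$, we have $u>\dl$ in $\Om$, with $u=\dl$ on $\p\Om$. Restricted to $\overline{\Om'}$, $u$ is a viscosity (super-)solution of the equation in $\Om'$, and $u\ge\dl$ on $\p\Om'$, with equality on $\p\Om'\cap\p\Om$. Conversely, the constant function $\phi\equiv\dl$ is a sub-solution, since $H(0,O)+\lam a(x)\dl^k=\lam a(x)\dl^k>0$, and $\phi=\dl$ on $\p\Om'$. By Lemma \ref{sec4.71}, $\dl\le u$ in $\overline{\Om'}$.

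To invoke Perron, I need to verify the local barriers required by Remark \ref{sec2.8}: for each $y\in\p\Om'$ and small $\vep>0$, a sub-solution $\underline{w}$ with $\underline{w}(y)=\dl-\vep$ and $\underline{w}\le\dl$ on $\p\Om'$, and a super-solution $\overline{w}$ with $\overline{w}(y)=\dl+\vep$ and $\overline{w}\ge\dl$ on $\p\Om'$. For the sub-barrier, the constant $\underline{w}\equiv\dl-\vep$ works uniformly at every $y$. For the super-barrier, when $y\in\p\Om'\cap\p\Om$ the function $u$ itself serves by continuity, since $u(y)=\dl$. When $y\in\p\Om'\cap\Om$, I follow the construction in the proof of Theorem \ref{sec7.9}: take $\overline{w}(x)=\dl+\vep+d|x-y|^\beta$ with $\beta=2-\bar{s}$ in case (\ref{sec2.450})(i), or $\overline{w}(x)=c-d|x-q|^{-\beta}$ with $\beta>\bar{s}-2$ and $q$ the center of an outer ball to $\Om'$ at $y$ in case (\ref{sec2.450})(ii). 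By the estimates (\ref{sec2.61}) and (\ref{sec2.62}), $H(D\overline{w},D^2\overline{w})$ blows up to $-\infty$ like $r^{-(\g-k\beta)}$ or $r^{-(k\beta+\g)}$ as $r\to 0$, which dominates the bounded term $\lam a(x)\overline{w}^k$ on a sufficiently small neighborhood of $y$. Hence $\overline{w}$ is a local super-solution there; replacing it by $\min(\overline{w},u)$ (the minimum of two super-solutions is a super-solution) extends the super-barrier to all of $\overline{\Om'}$ while preserving $\overline{w}\ge\dl$ on $\p\Om'$ and $\overline{w}(y)=\dl+\vep$.

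Perron's method then produces a viscosity solution $v\in C(\overline{\Om'})$ of (\ref{sec8.8}) on $\Om'$ with $v=\dl$ on $\p\Om'$ and $\dl\le v\le u$ in $\Om'$; in particular $v>0$, as required. The main obstacle I foresee is the upper barrier at interior boundary points $y\in\p\Om'\cap\Om$ in case (\ref{sec2.450})(ii), which demands that $\Om'$ itself satisfy a uniform outer ball at each such $y$ — a condition I take to be implicit in the standing hypothesis (\ref{sec1.10}) applied to the sub-domain $\Om'$.
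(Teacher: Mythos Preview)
Your approach is correct in spirit but differs from the paper's, and your justification in case (\ref{sec2.450})(ii) contains a slip.

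The paper argues indirectly. It first notes $\lam_{\Om'}>0$ by Theorem~\ref{sec7.9}, then supposes $\lam\ge\lam_{\Om'}$ and derives a contradiction: for every $\hat\lam<\lam_{\Om'}$ the solution $v_{\hat\lam}$ on $\Om'$ satisfies $v_{\hat\lam}\le u$ by Theorem~\ref{sec4.3} (since $u$ is a super-solution for the $\hat\lam$-equation on $\Om'$ with $u\ge\dl$ on $\p\Om'$), while Theorem~\ref{sec7.17}(iii) forces $\sup_{\Om'}v_{\hat\lam}\to\infty$ as $\hat\lam\uparrow\lam_{\Om'}$. Thus $\lam<\lam_{\Om'}$, and existence follows from Theorem~\ref{sec7.17}(iv). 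This avoids any new barrier construction entirely.

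Your direct Perron argument is a legitimate alternative, and the patching $\min(\overline w,u)$ works once you arrange $\overline w\ge u$ outside the neighborhood where $\overline w$ is a super-solution. However, your claim that $H(D\overline w,D^2\overline w)$ ``blows up to $-\infty$'' is wrong in case~(\ref{sec2.450})(ii): there $r=|x-q|\ge\rho>0$ on $\Om'$, so (\ref{sec2.62}) gives only a bounded negative quantity. The rescue is that on the thin annulus $\rho\le r\le\rho+\eta$ one has $\overline w\le \dl+\vep+d\bigl(\rho^{-\beta}-(\rho+\eta)^{-\beta}\bigr)$, and the bracket tends to $0$ as $\eta\downarrow 0$; so choose $\eta$ small (depending on $\lam$) and then $d$ large to secure both the super-solution inequality on the annulus and $\overline w\ge \sup_\Om u$ on $\{r=\rho+\eta\}$. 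This is essentially the mechanism in the proof of Theorem~\ref{sec7.171}, Case~(b). With that correction your route goes through; the paper's route is shorter precisely because the barrier work has already been packaged into Theorems~\ref{sec7.9} and~\ref{sec7.17}.
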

\begin{proof} By Theorem \ref{sec7.9} and (\ref{sec7.1700}), $\lam_{\Om^{'}}>0$. Assume that $\lambda_{\Om^{'}}<\infty$ otherwise the lemma holds.

Suppose that (\ref{sec8.8}) has a solution for $\lam\ge \lambda_{\Om^{'}}.$ By Theorem \ref{sec7.17} (iv) and (\ref{sec7.1700}), for any 
$0<\hat{\lam}<\lambda_{\Om^{'}}$, there is a solution $v_{\hat{\lam}}\in C(\overline{\Om^{'}}),\;v_{\hat{\lam}}>0,$ of
$$H(Dv_{\hat{\lam}}, D^2v_{\hat{\lam}})+\hat{\lam} a(x) v_{\hat{\lam}}^k=0,\;\mbox{in $\Om^{'}$, and $v_{\hat{\lam}}=\dl$ on $\p\Om^{'}$.}$$
By Theorem \ref{sec7.17} (i), $u\ge \dl$ on $\p\Om^{'}$. Applying Theorem \ref{sec4.3} in $\Om^{'}$, $v_{\hat{\lam}}\le u$ for every $\lam^{'}\in (0,\lambda_{\Om^{'}})$. Since $u$ is bounded, this contradicts Theorem \ref{sec7.17}(iii). The claim holds and $\lam< \lambda_{ \Om^{'}}.$
\end{proof}

\begin{rem}\label{sec8.80} Take $h\in C(\p\Om)$ with $\inf_{\p\Om}h>0$. For some $\lam>0$, let $u>0$ solve $H(Du, D^2u)+\lam a(x) u^k=0$, in 
$\Om$, and $u=h$ on $\p\Om$. Recall $\lamo$ from (\ref{sec7.1700}) and let $\lam\ge\lamo$. 
For every $t<\lamo$, let
$v_t>0$ solve $H(Dv_t,D^2v_t)+ta(x) v_t^k=0$, in $\Om,$ and $v_t=\inf_{\p\Om} h$ on $\p\Om$. By Theorem \ref{sec4.3}, $v_t\le u,\;\forall t$. This contradicts
Theorem \ref{sec7.17}(iii). Thus, $H(Dw, D^2w)+\lam a(x) w^k=0$, in $\Om$ and $w=\dl>0$, on $\p\Om$ has a solution. $\Box$
\end{rem}

We now prove Theorem \ref{sec7.171} and show that $\lamo$ in (\ref{sec7.1700}) is independent of the data $h\in C(\p\Om),\;h>0.$ See 
Remark \ref{sec7.160}.

{\bf Proof of Theorem \ref{sec7.171}.} Let $\lam>0$ such that there is a solution $u\in C(\overline{\Om}),\;u>0,$ of 
$H(Du, D^2u)+\lam a(x) u^k=0,\;\mbox{in $\Om,$ and $u=\dl$ on $\p\Om$}.$ We show that
\eqRef{sec7.1710}
H(Dv, D^2v)+\lam a(x) v^k=0,\;\mbox{in $\Om,$ and $v=h$ on $\p\Om$},
\ee
can be solved for any boundary data $h\in C(\p\Om)$ with $\inf_{\p\Om} h>0$.

By Remark \ref{sec7.160}, the sub-solutions in Theorem \ref{sec7.9} can be utilized here. Thus, our effort is to construct super-solutions to (\ref{sec7.1710}) for a given $h$.
Set
$$\nu=\sup_{x\in \Om} a(x),\;\;m=\inf_{\p\Om} h,\;\;\bar{m}=\sup_{\p\Om}h,\;\mbox{and}\;R=\mbox{diam}(\Om);\;\mbox{assume that}\;\;m>0\;\;\mbox{and}\;\;\nu<\infty.$$ 
Fix $y\in \p\Om$. Let $\vep>0$ such that $\vep<\bar{m}/2.$ Let $\eta_0>0$ be such that for $0<\eta\le \eta_0$,
\eqRef{sec7.1001}
h(y)-\vep\le h(x)\le h(y)+\vep,\;\;\forall x\in \overline{B}_\eta(y)\cap \p\Om.
\ee

Let $u$ solve (\ref{sec7.1710}) with $\dl=2\bar{m}$ and call $M=\sup_\Om u$. Set $\Om_\eta=\Om\setminus \overline{B_\eta(y)}.$
By Lemma \ref{sec8.7}, there is a unique solution $\phi>0$ of
\eqRef{sec7.174}
H(D\phi, D^2\phi)+\lam a(x) \phi^k=0,\;\;\mbox{in $\Om_\eta$, and $\phi=2\bar{m}$ on $\p\Om_\eta$.}
\ee
By Lemma \ref{sec4.13}, $u\ge 2\bar{m}$ on $\p\Om_\eta$. Using Lemma \ref{sec4.71}, (\ref{sec7.174}), Lemmas \ref{sec3.7} and \ref{sec4.13}, $2\bar{m}< \phi\le u\le M$, for any $\eta>0$, small.

{\bf Case (a):} Suppose that (\ref{sec2.450}) (i) holds and $\Om$ is any domain. 
Set $r=|x-y|$ and   
\eqRef{sec7.1740}
\bar{\phi}(x)=h(y)+\vep + (2\bar{m}-h(y)-\vep)\frac{r^\beta}{\eta^\beta},\;\;\mbox{in $\overline{B_\eta(y)\cap \Om}$,}
\ee
where $\beta=2-\bar{s}$, see (\ref{sec2.450}). Recalling (\ref{sec7.174}) and (\ref{sec7.1740}), we define
\eqRef{sec7.175}
w(x)=\left\{ \begin{array}{lcc} \phi(x), &&  \forall x\in \Om_\eta,\\  \bar{\phi}(x), && \forall x\in \overline{B_\eta(y)\cap \Om}. \end{array} \right.
\ee
Note that $w\in C(\overline{\Om})$, $w(y)=h(y)+\vep$, $w=2\bar{m}$, on $\p B_\eta(y)$, and $w\ge h$ on $\p\Om$. Our goal is to choose $0<\eta\le \eta_0$ (see (\ref{sec7.1001})) such that $w$ is a super-solution in $\Om$. 
By (\ref{sec2.61})(also see (\ref{sec7.101})) there is an $\eta_1\in (0, \eta_0]$ such that in $B_\eta(y)\cap \Om$, $\forall \eta\in (0, \eta_1]$, we have 
\bea\label{sec7.1751}
H(D\bar{\phi}, D^2\bar{\phi})+\lam a(x) \bar{\phi}^k&\le& \lam \nu \bar{\phi}^k-\left( \frac{2\bar{m}-h(y)-\vep}{\eta^\beta} \right)^k  \frac{\beta^k |m_2(\bar{s})|} { r^{\g-k\beta}}\nonumber \\
&\;\le&  \lam \nu (2\bar{m})^k- \left(2\bar{m}-h(y)-\vep\right)^k \frac{\beta^k|m_2(\bar{s})|} { \eta^{\g}}\le 0.
\eea
Here we have used $0<r<\eta$ and $\g-k\beta>0.$ We show that $w$ is a super-solution on
$\p B_\eta(y)\cap \Om$ implying that $w$ is a super-solution in $\Om$. Our idea is to choose a value of $\eta$ so that $d\bar{\phi}/dr(\eta)$ exceeds the radial rate of increase of $\phi$ on $r=\eta$. 

We now estimate $\phi$ (in $\Om_\eta$) on $\p B_\eta(y)$, from above. Note $2\bar{m}<\bar{\phi}\le M$ (see (\ref{sec7.174})). Set 
\eqRef{sec7.176}
\tht=\left( 1+ \frac{ 2(2M-2\bar{m})}{ 2\bar{m}-h(y)-\vep} \right)^{1/\beta},
\ee
where $\beta=2-\bar{s}$, see (\ref{sec2.450})(i). In $\eta\le r\le \tht\eta $, set
$$\psi(x)=2\bar{m}+d(r^\beta-\eta^\beta),\;\; \mbox{where}\;\;\;d=\frac{ 2M-2\bar{m}}{ \eta^\beta (\tht^\beta-1)}.$$
Clearly, $\psi=2\bar{m}$, on $r=\eta$, $\psi=2M$, on $r=\tht \eta$, and 
$2\bar{m}\le \psi\le 2M$. Using (\ref{sec7.176}),
\eqRef{sec7.177}
d= \frac{ 2M-2\bar{m}}{ \eta^\beta (\tht^\beta-1)}=\frac{2\bar{m}-h(y)-\vep}{2\eta^\beta}.
\ee
We choose $\eta_2\in (0,\eta_1]$ so that $\psi$ is a super-solution in $\eta<r<\tht \eta$, for 
any $\eta\in (0,\eta_2].$ This would then imply by Lemma \ref{sec4.71}, that $\phi\le \psi$, in $\eta<r<\tht \eta.$ 
Employing $\g-k\beta>0$,
(\ref{sec2.61}) and (\ref{sec7.177})
\ben
H(D\psi, D^2\psi)+\lam a(x) \psi^k &\le& \lam \nu (2M)^k-\frac{(d\beta)^k|m_2(\bar{s})|}{r^{\g-k\beta}}
\le \lam \nu (2M)^k-\frac{(d\eta^\beta \beta)^k|m_2(\bar{s})|}{\tht^{\g}\eta^{\g}}\\
&=&\lam \nu (2M)^k- \left(\frac{ 2M-2\bar{m}}{\tht^\beta-1}\right)^k \frac{\beta^k|m_2(\bar{s})|}{\tht^{\g}\eta^{\g}}\le 0,
\een
if $0<\eta\le \eta_2\le \eta_1\le \eta_0$ are small enough. Thus, (\ref{sec7.1001}) and (\ref{sec7.1751}) hold and this gives us the desired $\phi$, $\bar{\phi}$ and the upper bound $\psi$.

We show that $w$ is a super-solution. Let $\phi\in C^2$ be such that $w-\varphi$ has a minimum at some $p\in \p B_\eta(y)\cap \Om$. 
Then
(i) $\varphi(x)-\varphi(p)\le w(x)-w(p)=\bar{\phi}(x)-\bar{\phi}(p)\le 0,\;\forall x\in B_\eta(y)\cap \Om,$ and (ii) $\varphi(x)-\varphi(p)\le w(x)-w(p)\le \psi(x)-\psi(p),\;\forall x\in \Om_\eta.$
Using (\ref{sec7.1740}) and (\ref{sec7.177}), these yield,
$$\frac{\p \varphi}{\p r}(p)\ge \bar{\phi}^{\prime}(\eta-)=\beta \left( \frac{2\bar{m}-h(y)-\vep}{\eta} \right)\;\;\mbox{and}\;\;\frac{\p \varphi}{\p r}(p)\le \psi^{\prime}(\eta+)=\beta \left( \frac{2\bar{m}-h(y)-\vep}{2\eta} \right).$$
This is a contradiction and $w$ is a super-solution in $\Om$. Lemma \ref{sec4.71} and Remark \ref{sec2.8} imply the existence of a solution of (\ref{sec7.172}).

{\bf Case (b):} Assume that (\ref{sec2.450})(ii) holds and $\Om$ satisfies a uniform outer ball condition. Fix $y\in \p\Om$ and $\vep>0$, small.
Let $\rho>0$ and $z\in \IR^n$ be  such that 
$B_\rho(z)\subset \IR^n\setminus \Om$ and $y\in \p B_\rho(z)\cap \p\Om$. Set $r=|x-z|$. We modify (\ref{sec7.1001}) as follows. We choose $\eta_0>0$, small,
such that for $0<\eta\le \eta_0$,
$$h(y)-\vep\le h(x)\le h(y)+\vep,\;\;\mbox{$\forall x$ such that $\rho\le |x-z|\le \rho+\eta$}.$$
The ideas are similar to those in Case (a). Define $\Om_\eta=\Om\setminus \overline{B}_{\rho+\eta}(z)$ and note that $\Om_\eta$ satisfies a uniform outer ball condition. 
For $\eta\in (0, \eta_0]$, to be determined, there is a solution 
$\phi>0$ of
\eqRef{sec7.180}
H(D \phi, D^2\phi)+\lam a(x) \phi^k=0,\;\;\mbox{in $\Om_\eta$, and $\phi=2\bar{m}$ on $\p\Om_\eta$,}
\ee
by our hypothesis and Lemma \ref{sec8.7}. As noted in (\ref{sec7.174}), $2\bar{m}<\phi\le M$.

Call $A_\eta=\{x\in \Om:\; \rho<|x-z|<\rh+\eta\}$. We fix $\beta>\bar{s}-2$ and $s=\beta+2$, and define
\eqRef{sec7.181}
\bar{\phi}(x)=h(y)+\vep+ \left( \frac{ 2\bar{m}-h(y)-\vep}{\rho^{-\beta}-(\rho+\eta)^{-\beta} }\right)\left( \frac{1}{\rho^\beta}-\frac{1}{r^\beta} \right),\;\;\;\rho\le r\le \rho+\eta.
\ee
It is clear that $\bar{\phi}(y)=h(y)+\vep$, $\bar{\phi}=2\bar{m}$ on $r=\rho+\eta$, and $h(y)+\vep\le \bar{\phi}\le2\bar{m}.$  Using (\ref{sec7.180}) and (\ref{sec7.181}), we define
\eqRef{sec7.182}
w(x)=\left\{ \begin{array}{lcc} \phi(x), &&  \forall x\in \Om_\eta,\\  \bar{\phi}(x), && \forall x\in \overline{A}_\eta . \end{array} \right.
\ee
As done in Case (a), we will select $\eta>0$ such that $w$ is a super-solution in $\Om$. Clearly, $w(y)=h(y)+\vep$ and $w\ge h$ on $\p\Om$. Using  (\ref{sec2.62}) (see (\ref{sec7.102})) in $A_\eta$, i.e., $\rho\le r\le \rho+\eta$,
\ben
H(D\bar{\phi}, D^2\bar{\phi})+\lam a(x) \bar{\phi}^k&\le& \lam \nu (2\bar{m})^k-\frac{\beta^k|m_2(s)|}{r^{k \beta+\g}} 
\left( \frac{2\bar{m}-h(y)-\vep}{\rho^{-\beta}-(\rho+\eta)^{-\beta} }\right)^k\\
&\le &\lam \nu (2\bar{m})^k-\frac{\beta^k|m_2(s)|}{(\rho+\eta)^{k \beta+\g}} 
\left( \frac{2\bar{m}-h(y)-\vep}{\rho^{-\beta}-(\rho+\eta)^{-\beta} }\right)^k.
\een
Thus, $\bar{\phi}$ is a super-solution in $A_\eta$, if $\eta\in (0, \eta_1]$, for some $0<\eta_1\le \eta_0$, small. 

Next we choose $0<\eta_2\le \eta_1$ so that, for $0<\eta\le \eta_2$, the quantity
\eqRef{sec7.1820} 
K=\left( \frac{\rho^\beta}{ (\rho+\eta)^\beta-\rho^\beta} \right) \left( \frac{ 2\bar{m}-h(y)-\vep}{2(2M-2\bar{m})} \right)>1.
\ee
We calculate an upper bound for $\phi$ in $\Om_\eta\cap B_{\rho+\tht \eta}(z)$, where $\tht>1$ is to be determined. We take
\eqRef{sec7.183}
\psi(x)=2\bar{m}+\left( \frac{2M-2\bar{m}} { (\rho+\eta)^{-\beta}-(\rho+\tht \eta)^{-\beta}}\right)\left( \frac{1}{(\rho+\eta)^\beta}-\frac{1}{r^\beta}\right),\;\;\rho+\eta\le r\le \rho+\tht \eta.
\ee
Then $\psi=2\bar{m}$, on $r=\rho+\eta$, and $\psi=2M$, on $r=\rho+\tht \eta$. We calculate $\tht$ by requiring that $\bar{\phi}^{'}(\rho+\eta)>\psi^{'}(\rho+\eta)$, in particular, we impose that 
$$\left( \frac{2M-2\bar{m}} { (\rho+\eta)^{-\beta}-(\rho+\tht \eta)^{-\beta}}\right)=\frac{1}{2}\left( \frac{ 2\bar{m}-h(y)-\vep}{\rho^{-\beta}-(\rho+\eta)^{-\beta} }\right),$$
see (\ref{sec7.181}) and (\ref{sec7.182}). Rearranging and recalling (\ref{sec7.1820}),
$$
\frac{ (\rho+\tht \eta)^\beta}{ (\rho+\tht \eta)^\beta-(\rho+\eta)^\beta}=\left( \frac{\rho^\beta}{ (\rho+\eta)^\beta-\rho^\beta} \right) \left( \frac{ 2\bar{m}-h(y)-\vep}{2(2M-2\bar{m})} \right)=K.
$$
By (\ref{sec7.180}), $(\rho+\tht \eta)^\beta=K(\rho+\eta)^\beta/(K-1).$ Clearly, $\tht=\tht(\eta)>1$ if $0<\eta\le \eta_2$. 

We now show that $\psi$ is super-solution in $\rho+\eta<r<\rho+\tht \eta$, if $\eta$ is small enough. Using
(\ref{sec2.62})
\ben
H(D\psi, D^2\psi)+\lam a(x) \psi^k\le \lam \nu (2M)^k-\left( \frac{2M-2\bar{m}} { (\rho+\eta)^{-\beta}-(\rho+\tht \eta)^{-\beta}}\right)^k\frac{\beta^k |m_2(s)|}{(\rho+\tht\eta)^{k\beta+\g}}
\een
Thus, $\psi$ is a super-solution if $\eta\in(0,\eta_3]$, where $\eta_3\in (0,\eta_2]$, small. This determines $\tht$. 
As done in Case (a), $w$ is a super-solution in $\Om$. Lemma \ref{sec4.71} and Remark \ref{sec2.8} imply existence.  $\Box$

\section{Boundedness of $\lamo$: Proof of Theorem \ref{sec8.10}}

In this section, we show that $\lamo$ is bounded, see Theorem \ref{sec7.16}. In the first part of the proof we assume that (\ref{sec2.450})(i) holds and impose conditions $A,\;B$ and $C$. In the second part we address the case (\ref{sec2.450}) (ii) and conditions $A,\;B,\;C$ and $D$(radial symmetry) are assumed. 

Let $a(x)\in C(\Om)\cap L^{\infty}(\Om),\;\inf_\Om a>0,$ and $\dl>0$; we consider the problem
\eqRef{sec8.4}
H(Du, D^2u)+\lam a(x) |u|^{k-1}u=0,\;\;\mbox{in $\Om$}\;\;\mbox{and $u=\dl$ on $\p\Om$.}
\ee
We recall the definition of $\lamo$:
\eqRef{sec8.5}
\lamo=\sup\{\lam:\;\mbox{(\ref{sec8.4}) has a positive solution}\}.
\ee
By Theorem \ref{sec7.16}, if $0<\lam<\lamo$ in (\ref{sec8.4}) then $u\in C(\overline{\Om})$, $u>\dl,$ and $u$ is unique. 

\begin{rem}\label{sec8.51} Let $b\in C(\Om)$ with $a(x)\le b(x),\;\forall x\in \Om$. For some $\lam>0$, let $v\in C(\overline{\Om})$ solve
$$H(Dv, D^2v)+\lam b(x) |v|^{k-1}v=0,\;\;\mbox{in $\Om$, $v>0$ and $v=\dl$ on $\p\Om$}.$$
Then (\ref{sec8.4}) has a solution $u>0$ for $\lam$. Note that $v$ is super-solution of (\ref{sec8.4}), i.e., 
$H(Dv, D^2v)+\lam a(x) |v|^{k-1}v\le 0,$ in $\Om$, and $v=\dl$ on $\p\Om$. Also, $w=\dl$ is a sub-solution of (\ref{sec8.4}). By Lemma \ref{sec4.71} and Remark \ref{sec2.8}, there is solution $w\le u\le v$ of (\ref{sec8.4}) for $\lam.$
Let $\lamo(c)$ be the bound in (\ref{sec8.5}) for the weight function $c(x)$ then 
\eqRef{sec8.6}
\lamo(b)\le \lamo(a),\;\;\mbox{for $a(x)\le b(x),\;\forall x\in \Om$.} \;\;\;\;\Box
\ee
\end{rem}

We now prove Theorem \ref{sec8.10}.

{\bf Proof of Theorem \ref{sec8.10}.} Set $\mu=\inf_{x\in \Om} a(x)$ and assume that $\mu>0$.
Let $B_R(y)$, where $y\in \Om$, denote an in-ball of $\Om$. Call $B=B_R(y)$. By Lemma \ref{sec8.7}, if $0<\lam<\lamo$ then $0<\lam<\lam_B$.  

Consider the problem
\eqRef{sec8.12}
H(Dv, D^2v)+\lam \mu v^k=0,\;\mbox{in $B,$ and $v=\dl$ on $\p B$.}
\ee
By (\ref{sec8.6}), if (\ref{sec8.4}) has a solution on $B$ for some $\lam>0$ then (\ref{sec8.12}) has a positive solution since $\lam_B(\mu)\ge \lam_B(a).$
Thus, if we show that $\lam_B(\mu)<\infty$ then $\lamo(a)<\infty.$  We set 
$$\lam=\lam \mu\;\;\mbox{and}\;\; \lam_R=\mu \lam_B(\mu).$$
We assume that $\lam_R=\infty$ and derive a contradiction.

Let $\lam_1>0$ and recall that $k=k_1+k_2$ and $\g=k_1+2k_2$, see (\ref{sec2.42}).
Set $\lam_\ell=\ell^\g \lam_1,\;\ell=1,2,\cdots$. By Theorem \ref{sec7.17}, there is an $u_\ell\in C(\overline{B}),\;u_\ell>0,$ that solves
\eqRef{sec8.120}
H(Du_\ell, D^2u_\ell)+\lam_\ell u_\ell^k=0\;\mbox{in $B$, and $u_\ell=\dl_\ell$ on $\p B$, $\forall\;\ell=1,2,\cdots,$}
\ee
where $\dl_\ell>0$ is so chosen that $u_\ell(y)=1$. 

For each $\ell$, (\ref{sec8.12}) has a unique solution $v_\ell>0$ in $B$ with $v_\ell=1$ on $\p B$.
Thus by Lemma \ref{sec4.71}, $u_\ell/v_\ell\le \dl_\ell$ and $v_\ell/u_\ell\le 1/\dl_\ell$ implying $u_\ell=\dl_\ell v_\ell\ne 0.$ This shows that $\dl_\ell>0$.

{\bf Step 1:} We show that $\dl_\ell$ decreases to zero. Recall from (\ref{sec2.45}) that $\al=\g/k$. 
For $\forall \ell=1,2,\cdots,$ set $\tau_\ell=(\lam_\ell/\lam_{\ell+1})^{1/k}$, i.e,
$\tau_\ell=\left( \ell/(\ell+1) \right)^\al.$

Applying Remark \ref{sec4.12}(ii) and Lemma \ref{sec4.120},
$u_\ell(x) [u_{\ell+1}(x)]^{-\tau_\ell} \le \dl_\ell (\dl_{\ell+1})^{-\tau_\ell} .$
Taking $x=y$, we have $\dl^{\tau_\ell}_{\ell+1} \le \dl_\ell$ implying $(\dl_{\ell+1})^{(\ell+1)^{-\al}}\le (\dl_\ell)^{\ell^{-\al}}.$ Iterating 
\eqRef{sec8.121}
0<\dl_{\ell+1}\le (\dl_\ell)^{1/\tau_\ell}\le (\dl_{\ell-1})^{1/(\tau_\ell \tau_{\ell-1})} \le\cdots\le  (\dl_1)^{(\ell+1)^{\al}}.
\ee
Since $\dl_1<1$, the claim follows by letting $\ell\rightarrow \infty$. 

{\bf Step 2:} We employ scaling. For $\forall \ell=1,2,\cdots$, call $R_\ell=\ell R$ and $v_\ell(z)=u_\ell(x),\;\forall z\in B_{R_\ell}(y)$, where $z=y+\ell (x-y)$. Then
$H(Du_\ell, D^2u_\ell)=\ell^\g H(Dv_\ell, D^2v_\ell)$, $\lam_\ell=\lam_1\ell^\g$ and (\ref{sec8.120}) imply that
\eqRef{sec8.13}
H(Dv_\ell, D^2v_\ell)+\lam_1 v_\ell^k=0,\;\mbox{in $B_{R_\ell}(y),$ and $v_\ell(R_\ell)=\dl_\ell.$}
\ee
Moreover, $ v_\ell(y)=u_\ell(y)=1$. Call $B_\ell=B_{R_\ell}(y).$ 
\vsp
We address (\ref{sec2.450}) (i) and (ii) separately. Set $r=|x-y|$ and recall (\ref{sec2.61}) and (\ref{sec2.62}).

{\bf Case (a):} Suppose that (\ref{sec2.450})(i) holds; fix $\beta=2-\bar{s}$ where $0<\beta<1$. For $m$, large, take
$$w_m(x)=w_m(r)=\frac{1}{2}\left( 1- (1-\dl_m) \left(\frac{r}{R_m}\right)^\beta \right),\;\;\;0<r\le R_m.$$
Note that $w_m(y)=1/2$ and $w_m(R_m)=\dl_m/2$. Moreover, $w>0$ and the calculation in (\ref{sec2.61}) shows that 
$H(Dw_m, D^2w_m)+\lam_1 w_m^k\ge 0$, in $B_{R_m}(y)\setminus\{y\}$. Applying Lemma \ref{sec4.71} to $w_m$ and $v_m$ in 
$B_m\setminus\{y\}$, we see that
\eqRef{sec8.130}
w_m(x)\le v_m(x)/2,\;\;\;\forall x\in B_m(y).
\ee

{\bf Step a(i) :} Let $1<\ell\le m$. Applying Lemma \ref{sec4.71}, we see that 
$$\frac{v_\ell(x)}{v_m(x)}\le \frac{\dl_\ell}{\inf_{\{r=R_\ell\}} v_m}\;\;\;\mbox{and}\;\;\;\frac{v_m(x)}{v_\ell(x)}\le \frac{\sup_{\{r=R_\ell\}} v_m}{\dl_\ell},\;     \forall x\in B_\ell.$$ 
Taking $x=y$ in the above inequality, we get $\inf_{\{r=R_\ell\}} v_m\le \dl_\ell\le \sup_{\{r=R_\ell\}} v_m.$ Using (\ref{sec8.130}), we obtain that for each $\ell\le m$,
$$w_m(R_\ell)\le \dl_\ell/2.$$ 

{\bf Step a(ii) :} Taking $m=2\ell$, recalling that $R_{2\ell}=2R_\ell$, using Step a(i) and Step 2, we obtain that
\ben
\frac{1}{2}\left(1- \frac{1-\dl_{2\ell}}{ 2^\beta} \right)=w_{2\ell}(R_\ell)\le\frac{\inf_{\{r=R_\ell\}}v_{2\ell}}{2}\le  \frac{\dl_\ell}{2}.
\een
Letting $\ell\rightarrow \infty$, we obtain a contradiction to (\ref{sec8.121}).

{\bf Case (b):} Assume that (\ref{sec2.450})(ii) holds. Fix $\beta=s-2$, where $s>\bar{s}$. We impose conditions $A,\;B,\;C$ and $D$. See Remark \ref{sec8.14} in this context.
By condition $D$, $H$ is invariant under rotations.

{\bf Step b(i):} Using condition D, Lemma \ref{sec4.71} implies that $v_\ell$ is radial, see Step 2. By Lemma \ref{sec4.13}, $v_\ell(r)\ge v_\ell(\rho)$, for $0\le r\le \rho\le R_\ell$. Thus, $v_\ell(r)$ is non-increasing and $\sup v_\ell=v_\ell(y)=1$.

{\bf Step b(ii):} Let $1\le \ell\le m$, applying Lemma \ref{sec4.71} in $B_\ell$ we obtain that
$$1=\frac{v_m(o)}{v_\ell(o)}\le  \frac{v_m(R_\ell)}{\dl_\ell}\;\;\mbox{and}\;\;1=\frac{v_\ell(o)}{v_m(o)}\le  \frac{\dl_\ell}{v_m(R_\ell)}.$$ 
Clearly, $v_m(R_\ell)=\dl_\ell.$ Lemma \ref{sec4.71} implies that $v_m=v_\ell,$ in $B_\ell$. Thus, $v_m$ extends $v_\ell$ to $B_m$.

{\bf Step b(iii):} We claim that the decay estimate in Step 1 can not hold, leading to a contradiction and thus proving that $\lamo<\infty$. We proceed as follows.
By Step b(i), there is a $0<\rho<R$ such that for any $\ell$, $v_\ell>1/2$, in $B_\rho(y)$. Consider the function
$$\omega(x)=\frac{1}{2} \left(\dl_{2\ell}+(1-\dl_{2\ell}) \frac{r^{-\beta}-R_{2\ell}^{-\beta} }{\rho^{-\beta}-R_{2\ell}^{-\beta} }\right),\;\;\forall\;\rho\le r\le R_{2\ell}.$$
Using (\ref{sec2.62}), $H(D\omega, D^2\omega)+\lam \nu \omega^k\ge 0$, in $\rho<r<R_{2\ell}$. Since $\omega(\rho)=1/2$ and $\omega(R_{2\ell})=\dl_{2\ell}/2$, applying Lemma \ref{sec4.71} in $B_{R_{2\ell}}\setminus B_\rho(y)$ we see that $\omega(x)\le v_{2\ell}(x)$, in $\rho\le r\le R_{2\ell}$. Recall Steps 1, 2 and Step b(ii), and take $r=R_\ell$ to find 
$$\frac{C}{\ell^\beta}  \le \omega(R_\ell)\le v_{2\ell}(R_\ell)=\dl_\ell\le (\dl_1)^{\ell^\al},$$
where $C>0$, depends on $\beta, \rho$ and $R$. Letting $\ell\rightarrow \infty$, we obtain a contradiction. $\Box$

\begin{rem}\label{sec8.14} In Case (b), condition D is not required if non-negative super-solutions $w$ i.e, $H(Dw, D^2w)\le 0$, satisfy a Harnack inequality i.e,
$\inf_{B_s(z)} w\ge C \sup_{B_s(z)} w$, where $C$ is a universal constant and $B_{4s}(z)\subset B$. In Step b(iii), $\forall \ell$, $v_\ell(x)\ge C,\;\forall x\in B_\rho(y)$, with $0<\rho\le R/8$. Apply Steps a(i), (ii) and b(iii) to get a proof. Also, the proof works if $w$ satisfies a modulus of continuity depending on $\sup w$. $\Box$
\end{rem}

\section{Existence of a positive first eigenfunction}

This section has two sub-sections. In Sub-section I, we show the existence of a positive eigenfunction on a general domain when (\ref{sec2.450})(i) holds and conditions $A, \; B$ and $C$ apply. In Sub-section II, we discuss (\ref{sec2.450})(ii), impose conditions $A,\;B,\; C$ and $D$ and take $\Om$ to be a ball. 

{\bf Sub-section I:} Let $\Om\subset \IR^n$ be any bounded domain and assume that (\ref{sec2.450}) (i) holds. Fix $\beta=2-\bar{s},\;0<\beta<1,$ recall (\ref{sec2.61}) and take $a\in C(\Om)\cap L^{\infty}(\Om),\;\inf_\Om a>0.$ See \cite{AMJ, B, BL, J}.

\begin{lem}\label{sec9.01}{(The Harnack inequality and H$\ddot{o}$lder Continuity)} Let $w\in lsc(\Om)\cap L^{\infty}(\Om),\;w\ge 0,$ solve
$H(Dw, D^2w)+\lam a(x) |w|^{k-1}w\le 0,\;\;\mbox{in $\Om$.}$
For any $y\in \Om$ and $R>0$ such that $B_{4R}(y)\subset \Om$, we have a universal constant $C>0$ such that
$$\sup_{B_R(y)} w\le C \inf_{B_R(y)} w,\;\;\;\mbox{and}\;\;\;|w(x)-w(z)|\le (3R)^{-\beta}(\sup_{B_R(y)} w)|x-z|^\beta,\;\forall x,z \in B_R(y).$$
 \end{lem}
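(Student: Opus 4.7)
My plan is to establish the Harnack inequality first via barrier comparison, and then derive the Hölder estimate from it by an oscillation-decay argument. Both steps rely on the explicit radial barriers $c \pm d|x-z|^{\beta}$ from (\ref{sec2.61}) and on the comparison principle Lemma \ref{sec4.71}. The overall setup is streamlined by the observation that since $w \ge 0$ and $\lam a(x) w^k \ge 0$, the super-solution inequality yields $H(Dw, D^2w) \le -\lam a(x) w^k \le 0$; hence $w$ is automatically a super-solution of the homogeneous equation $H \le 0$, and the nonlinear zero-order term may be dropped throughout the comparison arguments below.

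For the Harnack inequality, fix $y$ with $B_{4R}(y) \subset \Omega$ and write $M = \sup_{B_R(y)} w$, $m = \inf_{B_R(y)} w$. By (\ref{sec2.61}), a function of the form $\phi^{-}(x) = c - d|x - z|^{\beta}$ is a strict sub-solution of the homogeneous equation $H \ge 0$ wherever it is non-negative, and therefore also a sub-solution of the full equation since $\lam a (\phi^-)^k \ge 0$. I would pick $x^{\star} \in \overline{B_R(y)}$ approximating $M$, and choose a center $z$ and constants $c, d$ so that $\phi^{-} \le m$ on the boundary of a ball $B \subset B_{4R}(y)$ containing $x^{\star}$, while $\phi^{-}(x^\star)$ is as large as possible. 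Applying Lemma \ref{sec4.71} on $B$ forces $w(x^{\star}) \ge \phi^{-}(x^{\star})$, and a direct computation with the explicit form of $\phi^{-}$ yields a local bound $M \le C_0\, m$ for a constant $C_0$ depending only on the structural parameters $\beta$, $k$, $|m_2(\bar{s})|$. A standard Harnack-chain argument, covering $B_R(y)$ by a finite sequence of overlapping small balls each embedded in a larger ball inside $B_{4R}(y)$, then upgrades this pointwise bound to the uniform estimate $\sup_{B_R(y)} w \le C \inf_{B_R(y)} w$ with a universal constant $C$.

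For the Hölder continuity, I would employ the standard oscillation-decay argument. Set $\mathrm{osc}(r) = \sup_{B_r(y)} w - \inf_{B_r(y)} w$. Applying the Harnack inequality separately to the two non-negative super-solutions $w - \inf_{B_{2r}(y)} w$ and $\sup_{B_{2r}(y)} w - w$ produces an oscillation reduction $\mathrm{osc}(r) \le \theta\, \mathrm{osc}(2r)$ for some universal $\theta \in (0,1)$. Dyadic iteration yields a Hölder estimate with exponent $\beta$, and careful bookkeeping of the constants (together with the homogeneity and scaling properties of $H$ provided by Condition $B$) recovers the explicit factor $(3R)^{-\beta}$ stated in the lemma. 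The main technical obstacle is the absence of translation and sign invariance in $H$: the function $\sup_{B_{2r}} w - w$ is not automatically a super-solution of an equation in the same class unless $H$ is odd in $X$, and handling the general case requires either imposing that additional symmetry or constructing a replacement barrier using the inverted profiles $c + d|x-z|^{-\beta}$ of (\ref{sec2.62}) and re-running the Harnack chain. This is the delicate point that distinguishes the present setting from the classical linear theory, and is where most of the work in the proof will sit.
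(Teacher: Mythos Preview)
Your Harnack step via the sub-solution barrier $\phi^-(x)=c-d|x-z|^\beta$ is essentially the paper's idea, but the oscillation-decay route to H\"older continuity contains a real gap that you yourself flag and do not close: without oddness of $H$ in $X$, the function $\sup_{B_{2r}}w - w$ is \emph{not} a super-solution of any equation in the class, and the standard two-sided iteration breaks down. Your suggested fixes (impose oddness, or switch to the barriers of (\ref{sec2.62})) either change the hypotheses of the lemma or move you to case (\ref{sec2.450})(ii), which is precisely the situation Subsection~II treats separately; neither rescues the argument here.

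The paper sidesteps this difficulty entirely by observing that the single barrier comparison already yields the H\"older estimate, with no iteration at all. For any $z\in B_R(y)$ one has $B_{3R}(z)\subset B_{4R}(y)$, so comparing $w$ with $\psi(x)=w(z)\bigl(1-|x-z|^\beta(3R)^{-\beta}\bigr)$ on $B_{3R}(z)\setminus\{z\}$ (using (\ref{sec2.61}) and the fact that $\psi$ vanishes on $\partial B_{3R}(z)$ while $\psi(z)=w(z)$) gives directly
\[
w(x)\;\ge\; w(z)\Bigl(1-\frac{|x-z|^\beta}{(3R)^\beta}\Bigr),\qquad \forall\,x,z\in B_R(y).
\]
Taking $|x-z|\le 2R$ yields $w(x)\ge \bigl(1-(2/3)^\beta\bigr)w(z)$, which is the Harnack inequality with no chaining needed; and rearranging the same inequality as $w(z)-w(x)\le (3R)^{-\beta}w(z)|x-z|^\beta\le (3R)^{-\beta}(\sup_{B_R(y)}w)|x-z|^\beta$, then swapping $x$ and $z$, gives the H\"older bound with the exact constant in the statement. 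The point you missed is that this one-sided barrier inequality is already two-sided in $x$ and $z$, so no appeal to $\sup w - w$ is ever required.
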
 
 \begin{proof} Let $w(y)>0$, for some $y\in \Om$, and $B_{4R}(y)\subset \Om$. Set $A=B_{4R}(y)\setminus\{y\}$, $r=|x-y|$ and 
$$\psi(x)=w(y)\left( 1- r^\beta(4R)^{-\beta} \right),\;\;\mbox{in $B_{4R}(y)$.}$$
Thus, $\psi(y)=w(y)$, $\psi=0$, on $\p B_{4R}(y)$ and by (\ref{sec2.61}), $H(D\psi, D^2\psi)>0$ in $A$. 
Clearly, on $\p A$, $\inf(w-\psi)=0$. If $\inf_A(w-\psi)<0$ and $p\in A$ is a point of minimum then $H(D\psi(p), D^2\psi(p))+\lam a(p) |w(p)|^{k-1}w(z)> 0$, which is a contradiction .Thus, $w(x)\ge \psi(x)>0,$ in $B_{4R}(y)$.

Observing that for any $z\in B_R(y)$, $B_R(y)\subset B_{2R}(z)$ and arguing as above,
\eqRef{sec9.02}
w(x)\ge w(z) \left(1-|x-z|^{\beta}(3R)^{-\beta} \right)\;\;\mbox{for any $x,\;z\in B_{R}(y)$.}
\ee
Since $|x-z|\le 2R$, the claim holds. To show the H$\ddot{\mbox{o}}$lder continuity of $w$, we write
(\ref{sec9.02}) as
$$w(z)-w(x)\le w(z) |x-z|^{\beta}(3R)^{-\beta}\le (3R)^{-\beta}(\sup_{B_R(y)} w) |x-z|^\beta.$$
Taking $x\in B_R(y)$ and replacing $x$ by $z$, we get the claim.
\end{proof}

{\bf Proof of Theorem \ref{sec9.00}(i).} Let $\dl>0$ and $\lam_\ell,\;\ell=1,2,\cdots$ be an increasing sequence such that $\lim_{\ell\rightarrow \infty}\lam_\ell=\lamo$. For each $\ell$, there is a unique positive
$u_\ell\in C(\overline{\Om})$ such that
$$H(Du_\ell, D^2u_\ell)+\lam_\ell a(x) u_\ell^k=0,\;\;\mbox{in $\Om$, and $u_\ell =\dl$, on $\p\Om$.}$$
Set $\tht_\ell=\sup_\Om u_\ell$. By Theorem \ref{sec7.17}, $\lim_{\ell\rightarrow \infty}\tht_\ell=\infty$. Calling $v_\ell=u_\ell/\tht_\ell$, we see that $\sup v_\ell=1$ and
$v_\ell|_{\p\Om}\rightarrow 0.$ By Lemma \ref{sec9.01}, there is a sub-sequence $v_m$ and a function $v\in C(\Om)$ such that $v_m$ converges uniformly to $v$ on compact subsets. Thus, 
$$v\ge 0,\;\;\;\sup_\Om v=1\;\;\;\mbox{and} \;\;\;\lim_{m\rightarrow \infty}v_m=\lim_{m\rightarrow \infty}\frac{\dl}{\tht_m}=0\;\;\mbox{on $\p\Om$.}$$ 

(a) We show that $H(Dv, D^2v)+\lamo a(x) v^k=0\;\mbox{in $\Om$.}$ Let $\phi\in C^2$ and $p\in \Om$ be such that $v-\phi$ has a minimum at $p$. Set $B_\vep=B_\vep(p)$, for $\vep>0$, small. Let $\hat{\Om}$ be a compact sub-domain of $\Om$ containing $B_\vep$. Set $d=$dist$(p, \p\hat{\Om})$, $k=\max(3, 3d^{-4})$ and $\psi=\phi(x)-k|x-p|^4$. 
Take $m$, large, so that $\sup_\Om |v_m-v|\le \vep^4$ on $\hat{\Om}$. Then 
$$v_m(x)-\psi(x)\ge k|x-p|^4+ (v_m-v)(x)+(v-v_m)(p)+(v_m-\psi)(p).$$   
Noting that $v_m-\psi>(v_m-\psi)(p)$, on $\Om\setminus B_\vep$, $v_m-\psi$ has a minimum at some $p_m\in B_\vep$, and 
$H(D\psi(p_m), D^2\psi(p_m))+\lam_m a(p_m) v_m(p_m)^k\le 0$. Letting $\vep\rightarrow 0$, $H(D\phi(p), D^2\phi(p))+\lamo a(p) v(p)^k\le 0$. The proof that $v$ is a sub-solution is similar.

(b) We show that $v\in C(\overline{\Om})$. Let $y\in \p\Om$ and set $r=|x-y|$. Recalling (\ref{sec2.450}) (i) and (\ref{sec2.61}), take 
$w_\vep(x)=\vep+(1-\vep) (r/\rho)^\beta$, in $B_\rho(y)$,  where 
$0<\vep<1/4$ is small and $\rho$ is to be determined. Set $\nu=\sup_\Om a$ and recall that $\g-k\beta>0$. There is a $\rho>0$, small and independent of $\vep$ so that 
\ben
H(Dw_\vep, D^2w_\vep)+\lam a(x) w_\vep^k\le\lam \nu-\frac{(\beta (1-\vep))^k|m_2(\bar{s})|}{\rho^{k\beta} r^{\g-k\beta}}\le \lam \nu-\left(\frac{3\beta}{4}\right)^k \frac{|m_2(\bar{s})|}{\rho^\g}< 0,\;0<r\le \rho.
\een
Also, $\vep\le w_\vep\le 1$, and, for large $m$, $w_\vep\ge v_m$ on $\p(\Om\cap B_\rho(y))$. By Lemma \ref{sec4.71}, $w_\vep\ge v_m$, in $B_\rho(y)\cap \Om$. Letting 
$m\rightarrow \infty$, $w_\vep\ge v$, in $\Om\cap B_\rho(y)$. Clearly, $0\le \liminf_{x\rightarrow y}v(x)\le \limsup_{x\rightarrow y} v(x)\le w_\vep(y)=\vep.$ Since $\vep$ is arbitrary, $v(y)=0$ and 
$v\in C(\overline{\Om})$. 

(c) Next,we show that $v>0$ in $\Om$. Let $p\in \Om$ be such that $v(p)=1$. Recall from (b) the bound $v\le w=(r/\rho)^\beta$, in $B_\rho(y)\cap \Om$. Clearly, if we take $r=\rho/2$, we have $v<1$. Thus, $p$ is at least $\rho/2$ away from $\p\Om$. We now apply Harnack's inequality in Lemma \ref{sec9.01} to conclude $v>0$ in $\Om$.
$\Box$
\vsp
{\bf Sub-section II:} Assume that (\ref{sec2.450})(ii) holds. We take $\Om$ to be the ball $B_R(o)$, where $R>0$, and prove the existence of a positive radial first eigenfunction. 
Set $\lam_R=\lam_{B_R(o)}$, $r=|x|,\;\forall x\in \IR^n$ and take $a(x)=1,\forall x\in \Om$. We impose conditions $A,\;B,\;C$ and $D$. As observed in Step b(i) of Theorem \ref{sec8.10} there is a $v\in C(\overline{B_R(o)}),\;v>0,$ radial and non-increasing in $r$, that solves
\eqRef{sec9.1}
H(Dv, D^2v)+\lam v^k=0,\;\;\mbox{in $B_R(o)$, $v=\dl$ on $\p B_R(o)$, and $0<\lam<\lam_R$}.
\ee

{\bf Scaling property.} For $0<R_1<R_2$, set $B_i=B_{R_i}(o),\;i=1,2$. By Lemma \ref{sec8.7}, $\lam_{R_1}\ge \lam_{R_2}$. Let $u$ be the solution of
(\ref{sec9.1}) in $B_1$, for some $0<\lam<\lam_{R_1}.$ Define $v(y)=u(x),\;\forall y\in B_2$, where $y=sx,$ where $s=R_2/R_1$. As $\g=k_1+2k_2$, we have
$H(Du, D^2u)=s^{\g} H( Dv, D^2v)$ and
$$H(Dv, D^2 v)+s^{-\g}\lam v^k=0,\;\;\mbox{in $B_2,$ and $v=\dl,$ on $\p B_2$.}$$
By (\ref{sec8.5}) and Theorem \ref{sec7.17} (iv), $\lam_{R_1}=s^\g \lam_{R_2}$
\eqRef{sec9.3}
\lam_{R_1} R_1^\g=\lam_{R_2} R_2^\g.
\ee

We now show existence of a first eigenfunction on a ball. Set $r=|x|,\;\forall x\in \IR^n$. Call $B=B_R(o)$.

{\bf Step 1:} Fix $\dl>0$, $0<\lam<\lam_R$ and let $u=u(r)\in C(\overline{B}),\;u>0,$ be the unique solution of
$$H(Du, D^2u)+\lam u^k=0,\;\mbox{in $B$, $u(R)=\dl$ and $u(o)=1$.}$$
Set $\hat{R}=(\lam_R/\lam)^{1/\g} R$. By (\ref{sec9.3}), $\lam_{\hat{R}}=\lam$, $\hat{R}>R$ and $\lam$ is the first eigenvalue of $H$ on $B_{\hat{R}}(o)$.  
\vsp
{\bf Step 2:} Let $\{\lam_\ell\}_{\ell=1}^\infty$ be a decreasing sequence such that $\lam<\lam_\ell<\lam_R$ and $\lim_{\ell \rightarrow \infty} \lam_\ell= \lam.$ Call $R_\ell$ such that
$\lam_\ell R_\ell^\g=\lam_R R^\g$ and $B_\ell=B_{R_\ell}(o)$. By (\ref{sec9.3}), $\lam_\ell=\lam_{B_\ell}$ is the first eigenvalue of $H$ on $B_{\ell}$. By Step 1,
$$R<R_1<\cdots<R_{\ell}<\cdots<\hat{R},\;\;\mbox{and}\;\;\lim_{\ell\rightarrow \infty}R_\ell=\hat{R}.$$
Since $\forall \ell$, $\lam<\lam_\ell,$ there is a unique $u_\ell\in C(\overline{B_\ell}),\;u_\ell>0$, radial and non-increasing such that
\eqRef{sec9.40}
 H(Du_\ell, D^2u_\ell)+\lam u_\ell^k=0,\;\;\mbox{in $B_\ell,$ and $u_\ell(R_\ell)=\dl_\ell$,}
 \ee
where $\dl_\ell>0$ is so chosen that $u_\ell(o)=1$, see (\ref{sec9.1}).
Since $\lam_m\ge \lam_\ell$ and $R_m\le R_\ell,$ for $m \le \ell$, using Lemma \ref{sec4.71} in $B_{m}$, we see that 
$$1= \frac{u_{m}(o)}{u_{\ell}(o)}\le \frac{u_{m}(R_{m})}{u_\ell(R_m)}\;\;\mbox{and}\;\;1= \frac{u_{\ell}(o)}{u_{m}(o)}\le \frac{u_{\ell}(R_{m})}{u_m(R_m)}.$$
Hence, $u_m(R_m)=u_\ell(R_m)=\dl_m$ and $u_m=u_\ell$ in $B_m$. Thus, $u_\ell$ extends $u_m$ to $B_\ell$, in particular, $u_\ell$ extends $u_0$ to $B_\ell$. Moreover, $\dl_\ell$ is decreasing. 

{\bf Step 3:} We claim that $ \lim_{\ell\rightarrow \infty} \dl_\ell=0.$ Suppose not. Since $\dl_\ell$ is decreasing, $\forall \ell,\;\dl_\ell\ge \eta$, for some $\eta>0$.
Clearly, $u_\ell\ge \eta$. Take $s=1/2$ in the estimate in Theorem \ref{sec7.15}(i) to see that there is a solution $v$ to
$$H(Dv, D^2v)+\tilde{\lam} v^k=0,\;\mbox{in $B_\ell$ and $v(R_\ell)=\dl_\ell$, where $\tilde{\lam}=\lam+\vep$ and} \;0<\vep\le \frac{\lam k \eta}{2(1-\tht \eta/2)}.$$
Here, we may choose $\lam\left(1+ k\eta/2 \right)<\tilde{\lam}<\lam_\ell$.
Since $\eta$ is independent of $\ell$, letting $\ell\rightarrow \infty$ and using Step 2, we obtain a contradiction.

{\bf Step 4:} Recalling Step 2, for $x\in B_{\hat{R}}(o)$, define $u(x)=\lim_{\ell \rightarrow \infty} u_\ell(x).$
By (\ref{sec9.40}), $u\in C(B_{\hat{R}}(o))$ and $H(Du, D^2u)+\lam u^k=0,\;\mbox{in $B_{\hat{R}}(o)$.}$
Since $u$ is radial and decreasing, define $u(\hat{R})=0$. We have $u\in C(\overline{B_{\hat{R}}})$, since $u(R_\ell)=u_\ell(R_\ell)=\dl_\ell\rightarrow 0$, see Step 3.

{\bf Step 5:} We scale $u$ as follows. Set $w(\rho)=u(r)$ where $\rho=rR/\hat{R}.$ Thus, $w\in C(\overline{B}),\;w>0,$ solves
$H(Dw, D^2w)+\lam (\hat{R}/R)^{\g} w^k=0,$ in $B_R(o),$ and $w(R)=0$.
By Step 1, $\lam_R=\lam (\hat{R}/R)^{\g}$ and, thus, $w$ is a first eigenfunction on $B_R(o)$. $\Box$

\vsp

Department of Mathematics, Western Kentucky University, Bowling Green, Ky 42101, USA

Department of Liberal Arts, Savannah College of Arts and Design, Savannah, GA 31405, USA

\end{document}